\newtheorem{theorem}{Theorem}[section]
\newtheorem{lemma}[theorem]{Lemma}
\newtheorem{corollary}[theorem]{Corollary}
\newtheorem{proposition}[theorem]{Proposition}
\theoremstyle{definition}
\newtheorem{definition}[theorem]{Definition}
\newtheorem{remark*}[]{Remark}
\newtheorem{example}[theorem]{Example}
\newtheorem{example*}[]{Example}
\newcommand{\Hom}{\mathrm{Hom}}
\newcommand{\End}{\mathrm{End}}
\newcommand{\rad}{\mathrm{rad}\,}
\newcommand{\uhom}[1]{\underline{\mathrm{Hom}}}
\newcommand{\Ext}{\mathrm{Ext}}
\newcommand{\Ker}{\mathrm{Ker}\,}
\newcommand{\Img}{\mathrm{Im}\,}
\newcommand{\add}{\mathrm{add}\,}
\newcommand{\Tr}{\mathrm{Tr}\,}
\newcommand{\pd}{\mathrm{pd}\,}
\newcommand{\id}{\mathrm{id}\,}
\newcommand{\C}{\mathcal{C}}
\begin{document}
\title{Tensor products of higher APR Tilting Modules}
\author[Xiaojian Lu]{Xiaojian Lu\\ LCSM(Ministry of Education), School of Mathematics and Statistics\\ Hunan Normal University\\ Changsha, Hunan 410081, P. R. China}

\thanks{E-mail: xiaojianlu10@126.com\\
Keywords:tensor products; n-APR tilting modules; n-BB tilting modules; global dimension; projection cover.\\[0.2cm]
2020 Mathematics Subject Classification: 16G70; 16D90; 16G10.}
\date{}

\begin{abstract}
 The higher APR tilting modules and higher BB tilting modules were introduced and studied in higher Auslander-Reiten theory. Our objective is to consider these tilting modules by the corresponding simple modules, and show that the tensor product of higher APR (BB) tilting modules is a higher APR (BB) tilting module.
\end{abstract}

\maketitle

\section{Introduction}\label{Intr}

Throughout this paper, assume that $n,m$ are two positive integers and $K$ is a field.

The higher-dimensional Auslander-Reiten theory as a generalization of classical Auslander-Reiten theory \cite{ARS,ASS} was introduced by Iyama and his coauthors \cite{I07-1,I07-2,I08-1,I11} and developed by many authors \cite{DI,IO12,HIO14,AIR,G16,GLL}.
In this setting, the classical tilting theory \cite{ASS,H88} were generalized to higher-dimensional analogs,
the BB tilting modules \cite{BB80} were generalized to the $n$-BB tilting modules by Hu and Xi in \cite{HX} which can be used to construct $n$-almost split sequences,
the APR tilting modules \cite{APR79} were generalized to the $n$-APR tilting modules by Iyama and Oppermann in \cite{IO12} which is the special $n$-BB tilting modules.
The more general tilting modules have been presented in \cite{BO4,M86}.

%$n$-representation-finite algebras and $n$-representation-infinite algebras as a generalization of hereditary algebras has been %studied in higher Auslander-Reiten theory.
Many scholars have studied the $n$-APR tilting modules which plays an important role in higher Auslander-Reiten theory.
Let $\Lambda$ be an $n$-representation-finite algebra or $n$-representation-infinite algebra, in \cite{IO12,HIO14,MY16}, they pointed that any simple projective and non-injective $\Lambda$-modules $P$ admits the $n$-APR tilting $\Lambda$-module associated with $P$,
moreover $n$-APR tilting modules preserve $n$-representation finiteness and $n$-representation infiniteness.
 Mizuno in \cite{M14} provided the description of quivers with relations of $n$-APR tilts.
In 2021, under certain condition, Guo and Xiao showed in \cite{GC21} that the $n$-APR tilts of the quadratic dual of truncations of $n$-translation algebras are realized as $\tau$-mutations.

The tensor product is a very effective research tool in representation theory of finite dimension algebras \cite{Aus55,CC17,L76,Le94,P-1,P-2}.
For $n$-, $m$-representation-finite algebras $\Lambda$, respectively $\Gamma$ over perfect field $K$, under condition of $l$-homogeneity, Herschend and Iyama in \cite{HI11} showed that tensor product $\Lambda\otimes_K \Gamma$ is an $(n+m)$-representation finite algebra which admits the $(n+m)$-APR tilting $(\Lambda\otimes_K \Gamma)$-module associated with simple projective module.
 In this case, it is a natural question to discuss the relationship between the $n$-, $m$-APR tilting modules over $\Lambda$, respectively $\Gamma$ and the $(n+m)$-APR tilting modules over algebra $\Lambda\otimes_K \Gamma$.
 There is a similar question for higher representation-infinite algebras by \cite{HIO14,MY16}.

In this paper, we study this question for general algebras.
The main tool is tensor products over field.
Given two finite dimension algebras $\Lambda$ and $\Gamma$ over field $K$ admitting $n$-respectively, $m$-APR tilting modules or $n$-respectively, $m$-BB tilting modules, we show that how to construct the $(n +m)$-APR tilting modules or $(n +m)$-BB tilting modules over the tensor product algebra $\Lambda\otimes_K \Gamma$.
Precisely,
under certain conditions, we prove that tensor product of $n$-BB tilting module with $m$-BB tilting module is an $(n+m)$-BB tilting module (see Theorem \ref{prop-n-BB-and-finite-global-dimensions}), tensor product of $n$-APR tilting module with $m$-APR tilting module is an $(n+m)$-APR tilting module (see Theorem \ref{the-tensor-products-of-n-APR-tilting-modules}).
As an application, we give a description of the higher APR tilting modules over the tensor products of higher hereditary algebras (see Corollary \ref{coro-APR-over-tensor-products-of-higher-hereditary}).
Furthermore, we give a characterisation of $\tau_{n}$-finite algebras by tensor products (see Theorem \ref{them-Tensor-products-and-tau{n}-finite}).

The article is organized as follows.
In the Section \ref{Prel}, we recall the definition of $\tau_{n}$-finite algebras and higher tilting modules.
In the Section \ref{Prep}, we study the modules and complexes over tensor product algebras and give some preparation results.
In the Section \ref{Tensor-products-of-higher-tilting-modules}, we investigate the tensor products of higher tilting modules and $\tau_{n}$-finite algebras, give the proof of main results. Moreover, let $\Lambda,\Gamma$ be basic ring-indecomposable $n$-respectively $m$-hereditary algebras, we give a description of the relationship between the $n$-, $m$-APR tilting modules over $\Lambda$, respectively $\Gamma$ and the $(n+m)$-APR tilting modules over algebra $\Lambda\otimes_K \Gamma$.

\section{Preliminaries}\label{Prel}

Throughout this paper,  $K$ is a field, all algebras are associative, unital, and finite dimensional over field $K$.
Let $\Lambda$ be a finite dimensional algebra over $K$ and $n$ a positive integer, we denote by $\rad \Lambda$ the Jacobson radical of $\Lambda$ and by $\mod\Lambda$ the category of the finitely generated left $\Lambda$-modules.
   We denote by $\Lambda^{op}$ the opposite algebra of $\Lambda$ and by $D = \Hom_{K}(-, K):\mod \Lambda \longrightarrow  \mod \Lambda^{op}$ the standard $K$-duality.
For $p \geq 0$, $\Ext^{p}_{\Lambda}(-,-)$ is the p{\rm th} extension bifunctor.

All tensor products $\otimes$ are over ${K}$.
Let $\Lambda,\Gamma$ be two finite dimensional algebras over $K$, then the ${K}$-module $\Lambda \otimes_{K} \Gamma$  becomes a finite dimensional algebra over $K$ with multiplication $(a_{1} \otimes b_{1})(a_{2} \otimes b_{2}) =a_{1}a_{2} \otimes b_{1}b_{2}$ for $a_{1},a_{2} \in \Lambda$, $b_{1},b_{2} \in \Gamma$, moreover $(\Lambda \otimes \Gamma)^{op} \cong \Lambda^{op}\otimes \Gamma^{op}$.
Let $M$, $N$ be left $\Lambda$-respectively $\Gamma$-modules, $M \otimes_{K} N$ was converted into a left $(\Lambda \otimes_{K} \Gamma)$-module in such a way that
$(a \otimes b)(m \otimes n) =am \otimes bn$ for $a \in \Lambda$, $b \in \Gamma$, $m \in M$,  $n \in N$ (see \cite[\uppercase\expandafter{\romannumeral9}] {CE}).

 The $n$-Auslander-Reiten translations of $\Lambda$-modules are introduced by Iyama (see \cite{I07-1,I08-1,I11}),
\begin{align*}
 \tau_{n} =D\Tr\Omega^{n-1}: \mod \Lambda \longrightarrow  \mod \Lambda,
  ~\tau_{n}^{-}=\Tr\Omega^{n-1}D: \mod \Lambda \longrightarrow  \mod \Lambda.
  \end{align*}
When the global dimension ${\rm gl.dim} \Lambda \leq n$, $\tau_{n}$ and $\tau_{n}^{-}$ are induced by
\begin{align*}
 \tau_{n}=D\Ext_{\Lambda}^{n}(-, \Lambda):\mod \Lambda \longrightarrow  \mod \Lambda,
  \tau_{n}^{-}=\Ext_{\Lambda^{op}}^{n}(D-, \Lambda):\mod \Lambda \longrightarrow \mod \Lambda.
 \end{align*}

\subsection{tau{n}-finite algebras}
The $\tau_{n}$-finite algebras was studied in \cite{I11}.
\begin{definition}\label{def-tau{n}-finite}
Assume that $\Lambda$ is a finite dimensional algebra over field $K$ and $n\geq 1$. We say that $\Lambda$ is $\tau_{n}$-finite if global dimensions ${\rm gl.dim} \Lambda \leq n$ and $\tau_{n}^{l}(D\Lambda)=0$ holds for some positive integer $l$, $\Lambda$ is $\tau_{n}$-infinite if global dimensions ${\rm gl.dim} \Lambda \leq n$ and $\tau_{n}^{l}(D\Lambda) \neq 0$ for any positive integer $l$.
\end{definition}

\begin{lemma}\label{lem-tau{n}-finite}\cite{I11}
A finite-dimensional algebra $\Lambda$ is $\tau_{n}$-finite if and only if $\Lambda^{op}$ is $\tau_{n}$-finite.
\end{lemma}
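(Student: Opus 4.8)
The plan is to isolate one substantive input, which I would quote from \cite{I11}, and surround it by a formal reduction using only the duality $D$ and the $\Ext^n$-formulas of Section \ref{Prel}. For the formal part: since $\mathrm{pd}_{\Lambda^{op}}(DM)=\mathrm{id}_{\Lambda}M$ for all $M\in\mmod\Lambda$, one has $\gl\Lambda=\gl\Lambda^{op}$, so the hypothesis $\gl\Lambda\le n$ is left--right symmetric; assuming it, $\tau_n$ and $\tau_n^-$ are genuine functors on $\mmod\Lambda$ and on $\mmod\Lambda^{op}$, and only the orbit condition of Definition \ref{def-tau{n}-finite} must be transported. Writing $\tau_n^{op}$ for the translation $\tau_n$ of the algebra $\Lambda^{op}$, the $\Ext^n$-descriptions together with $D\circ D\cong\mathrm{id}$ give the identity of endofunctors of $\mmod\Lambda^{op}$
$$\tau_n^{op}\;\cong\;D\circ\tau_n^-\circ D.$$
Iterating and cancelling the internal copies of $D\circ D$ yields $(\tau_n^{op})^{l}(D\Lambda^{op})\cong D\bigl((\tau_n^-)^{l}(\Lambda)\bigr)$ for every $l\ge 0$, where one also uses that the duality $D\colon\mmod\Lambda^{op}\to\mmod\Lambda$ sends the minimal injective cogenerator $D\Lambda^{op}$ to the regular module $\Lambda$. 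As $D$ is a duality, this already shows that $\Lambda^{op}$ is $\tau_n$-finite if and only if $\gl\Lambda\le n$ and $(\tau_n^-)^{l}(\Lambda)=0$ for some $l$.

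Comparing with Definition \ref{def-tau{n}-finite}, the Lemma reduces to the single-algebra statement: if $\gl\Lambda\le n$, then $\tau_n^{l}(D\Lambda)=0$ for some $l$ if and only if $(\tau_n^-)^{l}(\Lambda)=0$ for some $l$. This is the heart of the argument, and is what \cite{I11} provides: both conditions are equivalent to finite-dimensionality of the $(n{+}1)$-preprojective algebra $\Pi_{n+1}(\Lambda)=T_{\Lambda}\bigl(\Ext^n_{\Lambda}(D\Lambda,\Lambda)\bigr)$, equivalently to boundedness of the orbit of $\Lambda$ under the $n$-Serre functor $\nu_n$ of $D^b(\mmod\Lambda)$, whose zeroth cohomology recovers $\tau_n$ and $\tau_n^-$. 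Granting this bridge one can even bypass the first paragraph: the Lemma follows from $\Pi_{n+1}(\Lambda)^{op}\cong\Pi_{n+1}(\Lambda^{op})$, since the opposite of a tensor algebra over a bimodule is the tensor algebra over the opposite bimodule and $\Ext^n_{\Lambda}(D\Lambda,\Lambda)$ with its two $\Lambda$-actions interchanged is $\Ext^n_{\Lambda^{op}}(D\Lambda^{op},\Lambda^{op})$.

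Assembling: $\Lambda$ is $\tau_n$-finite $\iff$ $\gl\Lambda\le n$ and $\tau_n^{l}(D\Lambda)=0$ for some $l$ $\iff$ $\gl\Lambda\le n$ and $(\tau_n^-)^{l}(\Lambda)=0$ for some $l$ $\iff$ $\Lambda^{op}$ is $\tau_n$-finite. I expect the only real obstacle to be the single-algebra equivalence just quoted: $\tau_n$ and $\tau_n^-$ are mutually inverse only away from the projective and injective modules and the modules of homological dimension strictly less than $n$, and it is the behaviour of these ``degenerate'' pieces along the orbit that must be controlled to exclude one orbit dying while the other does not. The passages through $D$ in the first paragraph are purely formal.
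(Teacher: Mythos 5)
The paper offers no proof of this lemma at all --- it is stated as a bare citation to \cite{I11} --- so there is no in-paper argument to compare against; what can be judged is whether your reconstruction is sound, and it essentially is. The formal reduction in your first paragraph is correct: $\gl\Lambda=\gl\Lambda^{op}$ for finite dimensional algebras, and the identity $\tau_n^{op}\cong D\circ\tau_n^{-}\circ D$ (which in fact already holds at the level of the $D\Tr\Omega^{n-1}$ definitions, not only via the $\Ext^n$ formulas) yields $(\tau_n^{op})^{l}(D\Lambda^{op})\cong D\bigl((\tau_n^{-})^{l}(\Lambda)\bigr)$, so the lemma collapses to the single-algebra claim that, when $\gl\Lambda\le n$, the orbit $\{\tau_n^{l}(D\Lambda)\}_{l}$ eventually vanishes if and only if the orbit $\{\tau_n^{-l}(\Lambda)\}_{l}$ does. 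You are right that this is the genuine content and right to flag that it is not formal, precisely because $\tau_n$ and $\tau_n^{-}$ fail to be mutually inverse on projectives, injectives, and modules of small homological dimension. The bridge you quote is the correct one: both vanishing conditions are equivalent to finite dimensionality of $\Pi_{n+1}(\Lambda)=T_{\Lambda}\bigl(\Ext^n_{\Lambda}(D\Lambda,\Lambda)\bigr)$, and the isomorphism $\Pi_{n+1}(\Lambda)^{op}\cong\Pi_{n+1}(\Lambda^{op})$ then gives the cleanest self-contained route (this is the mechanism in \cite{I11}, made explicit in \cite{HIO14}, where $\Pi_{n+1}(\Lambda)\cong\bigoplus_{i\ge 0}\Hom_{D^b(\mmod\Lambda)}(\Lambda,\nu_n^{-i}\Lambda)$ and Serre duality identifies the degree $i$ piece with $D\,{\rm H}^{0}(\nu_n^{i}(D\Lambda))$). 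The only caveat is that, as written, your argument is a reduction to a quoted theorem rather than a proof; since the paper itself only cites \cite{I11} for the whole statement, that is acceptable here, and you have added value by isolating exactly which input is needed and why the naive ``apply $D$ everywhere'' argument does not close on its own.
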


As the algebras of the global dimensions at most $n$, $n$-complete algebras and $n$-representation infinite algebra was studied in higher representation theory.
\begin{example}\label{exam-tau{n}-finite}
\begin{enumerate}
\item Any $n$-complete algebra is $\tau_{n}$-finite (see \cite[Proposition 1.12] {I11}).

\item Any $n$-representation infinite algebra is $\tau_{n}$-infinite.  Since for $n$-representation infinite algebra $\Lambda$, the functor $\nu_{n}$ is an auto-equivalence of the bounded derived category of $\mod \Lambda$ (see \cite{HIO14}), so $\nu_{n}^{i}(D\Lambda) \neq 0$ for $i \geq 0$. By \cite[Proposition 4.21] {HIO14}, we have $\tau_{n}^{i}(D\Lambda) =\nu_{n}^{i}(D\Lambda) \neq 0$ for $i \geq 0$.
\end{enumerate}
\end{example}

\subsection{Higher tilting modules}

From the viewpoint of higher representation theory, we consider higher tilting modules.
As a generalization of APR tilting modules \cite{APR79}, the $n$-APR tilting modules was introduced by Iyama and Oppermann (see \cite{IO12}).
As a generalization of BB tilting modules \cite{BB80}, the $n$-BB tilting modules was studied by Hu and Xi (see \cite{HX}).

\begin{definition}\label{def-n-APR-tilting-module}
Suppose that  $\Lambda$ is a basic finite dimensional algebra and $n \geq 1$. Suppose $P$ is a simple projective $\Lambda$-module. We decompose $\Lambda = P \oplus Q$ as a $\Lambda$-module. If $P$ satisfies $\Ext^{i}_{\Lambda}(D \Lambda, P) = 0$ for any $0 \leq i < n$, then we call
$$T = (\tau_{n}^{-}P) \oplus Q$$
the $weak$ $n$-$APR$ $tilting$ $module$ associated with $P$.
If moreover injective dimension $\id_{\Lambda} P = n$, then we call $T$ an $n$-$APR$ $tilting$ $module$ and we call $\End_{\Lambda}(T)^{op}$ an $n$-$APR$ $tilt$ of $\Lambda$.
Dually we define $(weak)$ $n$-$APR$ $cotilting$ $modules$ and $n$-$APR$ $cotilt$ of $\Lambda$.
\end{definition}

By above Definition, an $n$-APR tilting module $T = (\tau_{n}^{-}P) \oplus Q$ associated with $P$ implies that $P$ is an simple projective and non-injective $\Lambda$-module.

\begin{definition}\label{def-n-BB-tilting-module}
Suppose that  $\Lambda$ is a basic finite dimensional algebra and $n \geq 1$. Suppose $S$ is a simple $\Lambda$-module. We decompose $\Lambda = P(S) \oplus Q$ where $P(S)$ is the projective cover of $S$. If $S$ satisfies
\begin{enumerate}
\item $\Ext^{i}_{\Lambda}(D \Lambda, S) = 0$ for any $0 \leq i < n$,

\item $\Ext^{i}_{\Lambda}(S, S) = 0$ for any $1 \leq i \leq n$,
\end{enumerate}
  then we call
$T = (\tau_{n}^{-}S) \oplus Q$
the $n$-$BB$ $tilting$ $module$ associated with $S$.
\end{definition}

Recall the generalized tilting modules \cite{BO4,M86}. Note that the tilting $\Lambda$-module $T$ with $\pd_{\Lambda} T \leq1$ is the classical tilting module \cite{ASS,H88}.
\begin{definition}
 Let $\Lambda$ be a finite dimensional algebra. An $\Lambda$-module $T \in \mod\Lambda$ is called tilting module with $\pd_{\Lambda} T \leq m$ if there exists $m \geq 0$ such that
\begin{enumerate}
\item $\pd_{\Lambda} T \leq m$,

\item $\Ext^{i}_{\Lambda}(T,T)=0$ for any $i > 0$,

\item there exists an exact sequence
$0 \xrightarrow{} \Lambda \xrightarrow{} T_{0} \xrightarrow{} \dots\ \xrightarrow{} T_{m} \xrightarrow{}0$
with $T_{i} \in \add T$.
\end{enumerate}
\end{definition}

Note that the $1$-APR, $1$-BB tilting module is just the classical APR, BB tilting module respectively. An $n$-APR tilting module is a  special $n$-BB tilting module associated with simple projective module.
Furthermore, an $n$-APR tilting module $T$ is in fact a tilting module with $\pd_{\Lambda} T = n$ (see \cite[Theorem 3.2] {IO12}) and an $n$-BB tilting module $T$ is a tilting module with $\pd_{\Lambda} T \leq n$ (see \cite[Lemma 4.2] {HX}).
The tensor products of tilting modules with finite projective dimension was studied in \cite{CC17,M86} and tensor products preserves tilting properties.
The purpose of this paper is to consider the tensor products of higher APR (BB) tilting modules by the corresponding simple modules.

\section{Preparation}\label{Prep}

In this section, in the setting of tensor products of finite dimensional algebras, our aim is to discuss tensor products of modules and complexes.
To apply tensor products without confusion, we use the symbol $\otimes$ for modules and $\otimes^{T}$ for complexes.

\subsection{Tensor products and semisimple, basic algebras}

We need the following two results.
\begin{proposition}\label{canonical-map}
Assume that $\Lambda,\Gamma$ are finite dimensional algebras over field $K$. Let $M,N \in \mod\Lambda$ and~$M^{'},N^{'} \in \mod\Gamma$. Then the canonical map
$$\Hom_{\Lambda}(M,N)\otimes \Hom_{\Gamma}(M^{'},N^{'}) \stackrel{}{\longrightarrow} \Hom_{\Lambda \otimes \Gamma}(M \otimes M^{'}, N \otimes N^{'})$$
given by $f \otimes g \stackrel{}{\longrightarrow} f \otimes g$ is an isomorphism of $K$-vector spaces.
\end{proposition}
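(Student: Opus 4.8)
The plan is to reduce the claim, by additivity of both sides in all four arguments, to the case where $M$, $N$, $M'$, $N'$ are among a chosen set of generators for which the statement can be checked directly. First I would observe that both the source $\Hom_{\Lambda}(M,N)\otimes\Hom_{\Gamma}(M',N')$ and the target $\Hom_{\Lambda\otimes\Gamma}(M\otimes M',N\otimes N')$ are additive bifunctors (contravariant in $M,M'$, covariant in $N,N'$) from $(\mmod\Lambda)^{op}\times(\mmod\Lambda)\times(\mmod\Gamma)^{op}\times(\mmod\Gamma)$ to $K$-vector spaces, and that the canonical map $f\otimes g\mapsto f\otimes g$ is a natural transformation between them. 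Since every finitely generated module is a quotient of a finite free module, and both functors send finite direct sums to finite direct sums (on each variable), a standard four-step dévissage lets me replace $M,N$ by $\Lambda$ and $M',N'$ by $\Gamma$.

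Next I would verify the isomorphism in the free case. Here $\Hom_{\Lambda}(\Lambda,\Lambda)\otimes\Hom_{\Gamma}(\Gamma,\Gamma)\cong\Lambda\otimes\Gamma$ via the usual evaluation-at-$1$ identifications, and $\Hom_{\Lambda\otimes\Gamma}(\Lambda\otimes\Gamma,\Lambda\otimes\Gamma)\cong\Lambda\otimes\Gamma$ likewise; one checks that the canonical map carries $1_{\Lambda}\otimes 1_{\Gamma}$ to $1_{\Lambda\otimes\Gamma}$ and is therefore the identity of $\Lambda\otimes\Gamma$ under these identifications, hence an isomorphism. More carefully, since the dévissage uses cokernels it is cleanest to phrase it as: both functors are right exact in $M$ (being of the form $\Hom$ out of $M$ composed with exact functors, after noting the target equals $\Hom_{\Lambda}\big(M,\Hom_{\Gamma}(M',N\otimes N')\big)$ by tensor-hom-type adjunction — but to avoid that, simply take free presentations $\Lambda^{b}\to\Lambda^{a}\to M\to 0$ and $\Lambda^{d}\to\Lambda^{c}\to N\to 0$... actually the covariant variable $N$ needs left-exactness). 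The clean statement: fix $M',N'$ and take a free presentation of $M$; both sides are left exact in $N$, so reduce $N$ to $\Lambda$; then $\Hom_{\Lambda}(M,\Lambda)$ and $\Hom_{\Lambda\otimes\Gamma}(M\otimes M',\Lambda\otimes N')$ are right exact (contravariant) in $M$ via the presentation, reducing $M$ to $\Lambda$; symmetrically dispatch $M',N'$. At each reduction step the canonical map for the bigger module sits in a commutative diagram with exact rows and the canonical maps for the smaller modules, so the five lemma applies.

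I expect the main obstacle to be purely bookkeeping: making sure that at each of the four reduction steps the relevant functor has the exactness needed (left exactness of $\Hom$ in a covariant slot, right exactness of contravariant $\Hom$ on a presentation) and that the natural transformation is genuinely compatible with the connecting maps, so that the five lemma (or the short five lemma) can be invoked. No step is conceptually hard; the only mild subtlety is choosing the order of the reductions so that one always reduces a variable against a functor that is exact in the required direction. Once all four variables are free, the computation $\Lambda\otimes\Gamma\cong\Lambda\otimes\Gamma$ via evaluation at $1\otimes 1$ finishes the proof.
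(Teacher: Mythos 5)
The paper gives no argument of its own here -- it simply cites Cartan--Eilenberg -- so your self-contained d\'evissage is a genuinely different route, and its core is sound: reduce the contravariant variables $M$, $M'$ to free modules via finite presentations, use left exactness of contravariant $\Hom$ together with exactness of $-\otimes_K-$, apply the five lemma to the resulting map of left exact sequences, and check the free case by evaluation at $1\otimes 1$.

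However, one step in your ``clean statement'' fails as written: ``both sides are left exact in $N$, so reduce $N$ to $\Lambda$.'' To reduce the covariant variable $N$ to free modules by left exactness you would need an exact sequence $0\to N\to \Lambda^{a}\to\Lambda^{b}$, i.e.\ a copresentation of $N$ by free modules, and this does not exist for a general finite dimensional algebra: already for the path algebra of the quiver $1\to 2$, the simple module at the source does not embed into any free module, since the socle of $\Lambda$ consists only of copies of the other simple. The same objection applies to reducing $N'$. Fortunately the reduction of the covariant slots is superfluous: once $M=\Lambda$ and $M'=\Gamma$, evaluation at $1$ identifies $\Hom_{\Lambda}(\Lambda,N)\otimes\Hom_{\Gamma}(\Gamma,N')$ with $N\otimes N'$ and $\Hom_{\Lambda\otimes\Gamma}(\Lambda\otimes\Gamma,N\otimes N')$ with $N\otimes N'$ for \emph{arbitrary} $N$ and $N'$, and the canonical map becomes the identity. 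So the correct order is: reduce only $M$ and $M'$ (where free presentations always exist and contravariant $\Hom$ turns them into left exact sequences on both sides, compatibly with the natural transformation), then conclude directly. With that correction your argument is complete.
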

\begin{proof}
It follows as an consequence of Proposition $\uppercase\expandafter{\romannumeral11}$.1.2.3 and Theorem $\uppercase\expandafter{\romannumeral11}$.3.1 in \cite{CE}.
\end{proof}

\begin{proposition}\label{prop-Tensor-product-of-exact-sequence}
Assume that $\Lambda,\Gamma$ are  finite dimensional algebras over field $K$. Let $0 \xrightarrow{} M \xrightarrow{f} N \xrightarrow{g} L \xrightarrow{} 0$ be an exact sequence in $\mod\Lambda$ and $0 \xrightarrow{} M^{'} \xrightarrow{f^{'}} N^{'} \xrightarrow{g^{'}} L^{'} \xrightarrow{} 0$ an exact sequence in $\mod\Gamma$. Then the following sequence
$$0 \xrightarrow{} (f \otimes 1)(M \otimes N^{'}) +  (1 \otimes f^{'})(N \otimes M^{'}) \xrightarrow{} N \otimes N^{'} \xrightarrow{g \otimes g^{'}} L \otimes L^{'} \xrightarrow{} 0$$
is an exact sequence in $\mod(\Lambda \otimes \Gamma)$.
\end{proposition}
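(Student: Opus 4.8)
The plan is to reduce everything to the exactness of $-\otimes_K-$: since $K$ is a field, tensoring an exact sequence of $K$-vector spaces by a fixed $K$-vector space (on either side) again gives an exact sequence. Hence all the maps between the relevant tensor products are honest inclusions or surjections, and the three kernels we shall need can be read off directly.

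First I would record the routine facts. The map $g\otimes g'$ is $(\Lambda\otimes\Gamma)$-linear and surjective because $g$ and $g'$ are surjective. Both $(f\otimes 1)(M\otimes N')$ and $(1\otimes f')(N\otimes M')$ are $(\Lambda\otimes\Gamma)$-submodules of $N\otimes N'$, being images of the $(\Lambda\otimes\Gamma)$-linear maps $f\otimes 1$ and $1\otimes f'$; therefore so is their sum, which I denote $X$. Since $gf=0$ and $g'f'=0$, we get $(g\otimes g')(f\otimes 1)=0$ and $(g\otimes g')(1\otimes f')=0$, so $X\subseteq \Ker(g\otimes g')$. It remains to prove the reverse inclusion $\Ker(g\otimes g')\subseteq X$; once this is done, $g\otimes g'$ induces the asserted short exact sequence $0 \to X \to N\otimes N' \to L\otimes L' \to 0$ in $\mod(\Lambda\otimes\Gamma)$.

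For the reverse inclusion I would run a diagram chase, using exactness of tensoring over $K$ to identify: $\Ker(g\otimes 1_{N'}) = (f\otimes 1)(M\otimes N')$ (tensor $0\to M\to N\to L\to 0$ with $N'$); $\Ker(1_L\otimes g') = (1_L\otimes f')(L\otimes M')$ (tensor $0\to M'\to N'\to L'\to 0$ with $L$); and that $g\otimes 1_{M'}\colon N\otimes M'\to L\otimes M'$ is surjective. Given $x\in\Ker(g\otimes g')$, factor $g\otimes g' = (1_L\otimes g')\circ(g\otimes 1_{N'})$, so $(g\otimes 1_{N'})(x)\in\Ker(1_L\otimes g') = (1_L\otimes f')(L\otimes M')$; choose $y\in L\otimes M'$ with $(g\otimes 1_{N'})(x)=(1_L\otimes f')(y)$ and then $z\in N\otimes M'$ with $(g\otimes 1_{M'})(z)=y$. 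A short computation gives $(g\otimes 1_{N'})\big((1_N\otimes f')(z)\big)=(1_L\otimes f')(y)=(g\otimes 1_{N'})(x)$, hence $x-(1_N\otimes f')(z)\in\Ker(g\otimes 1_{N'})=(f\otimes 1)(M\otimes N')$, so $x\in X$ as required.

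I do not expect a genuine obstacle here; the only points that need care are that one is working with tensor products over the field $K$ (so flatness, hence exactness, is automatic) and that $X$ really is a $(\Lambda\otimes\Gamma)$-submodule, which holds because $f\otimes 1$ and $1\otimes f'$ are morphisms in $\mod(\Lambda\otimes\Gamma)$. As an alternative to the explicit chase, one could instead invoke the standard right-exactness isomorphism $(N/fM)\otimes_K(N'/f'M')\cong (N\otimes N')/\big((fM\otimes N')+(N\otimes f'M')\big)$ together with $(\Lambda\otimes\Gamma)$-linearity of the natural quotient map, identifying the left side with $L\otimes L'$; but the hands-on argument above is self-contained and keeps track of the module structure throughout.
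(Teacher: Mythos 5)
Your proposal is correct and follows essentially the same route as the paper: both establish the easy inclusion $X\subseteq\Ker(g\otimes g')$ and surjectivity of $g\otimes g'$, then prove the reverse inclusion by a diagram chase on the $3\times 3$ tensor diagram, factoring $g\otimes g'$ through one of the intermediate maps and using exactness of $-\otimes_K-$ to identify the relevant kernels with images. The only (immaterial) difference is that you factor $g\otimes g'=(1\otimes g')\circ(g\otimes 1)$ while the paper uses the transposed factorization $(g\otimes 1)\circ(1\otimes g')$.
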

\begin{proof}
Since, in the setting of tensor products over fields, the  tensor product bifunctor is an exact functor, the diagram
$$
\xymatrix@C=1.0cm@R1.0cm{
&         &0 \ar[d]                     &0   \ar[d]                     &0  \ar[d]                    &   \\
&0 \ar[r] &{M \otimes M^{'}} \ar[r]^{1 \otimes f^{'}}\ar[d]^{f \otimes 1}       &{M \otimes N^{'}}  \ar[r]^{1 \otimes g^{'}}\ar[d]^{f \otimes 1}
                                                            &{M \otimes N^{'}}   \ar[r]\ar[d]^{f \otimes 1}        &0  \\
&0 \ar[r] &{N \otimes M^{'}}  \ar[r]^{1 \otimes f^{'}}\ar[d]^{g \otimes 1}      &{N \otimes N^{'}}   \ar[r]^{1 \otimes g^{'}}\ar[d]^{g \otimes 1}
                                                            &{N \otimes L^{'}}   \ar[r]\ar[d]^{g \otimes 1}        &0\\
&0 \ar[r] &{L \otimes M^{'}}   \ar[r]^{1 \otimes f^{'}}\ar[d]  &{L \otimes N^{'}}  \ar[r]^{1 \otimes g^{'}}\ar[d]     &{L \otimes L^{'}}  \ar[r]\ar[d]     &0  \\
&         &0                            &0                              &0                            &
}$$
is commutative with columns and rows exact.
Because $(g \otimes g^{'})(f \otimes 1) =0 =(g \otimes g^{'})(1 \otimes f^{'})$, so $(f \otimes 1)(M \otimes N^{'}) +  (1 \otimes f^{'})(N \otimes M^{'}) \subseteq \Ker(g \otimes g^{'})$. Observed that morphism $g \otimes g^{'}$ is surjective.  It suffices to show that $\Ker(g \otimes g^{'}) \subseteq (f \otimes 1)(M \otimes N^{'}) +  (1 \otimes f^{'})(N \otimes M^{'})$.

 Assume $x \in \Ker(g \otimes g^{'})$, $0 =(g \otimes g^{'})(x) =(g \otimes 1)(1 \otimes g^{'})(x)$,
 so $(1 \otimes g^{'})(x) \in \Ker(g \otimes 1) =\Img(f \otimes 1)$, hence there exists $x_{1} \in M \otimes N^{'}$ such that $(1 \otimes g^{'})(x) =(f \otimes 1)(x_{1})$. Because $1 \otimes g^{'}$ is surjective, so there exists $x_{2} \in M \otimes N^{'}$ such that $x_{1} =(1 \otimes g^{'})(x_{2})$
\begin{align*}
((1 \otimes g^{'}))(x - (f \otimes 1)(x_{2})) &= (1 \otimes g^{'})(x) - (1 \otimes g^{'})(f \otimes 1)(x_{2})\\
                                              &= (1 \otimes g^{'})(x) - (f \otimes 1)(1 \otimes g^{'})(x_{2})\\
                                              &= (1 \otimes g^{'})(x) - (f \otimes 1)(x_{1})\\
                                              &=0
\end{align*}
Hence $x - (f \otimes 1)(x_{2}) \in \Ker(1 \otimes g^{'}) = \Img(1 \otimes f^{'})$,
this implies that there exists $x_{3} \in N \otimes M^{'}$ such that $x - (f \otimes 1)(x_{2}) = (1 \otimes f^{'})(x_{3})$, so we have $x = (f \otimes 1)(x_{2}) + (1 \otimes f^{'})(x_{3}) \in (f \otimes 1)(M \otimes N^{'}) +  (1 \otimes f^{'})(N \otimes M^{'})$. Therefore $\Ker(g \otimes g^{'}) \subseteq (f \otimes 1)(M \otimes N^{'}) +  (1 \otimes f^{'})(N \otimes M^{'})$ and finish the proof.
\end{proof}

If $\Lambda$ is a finite dimensional algebra, then the ${\rm radical}$ $\rad\Lambda$ of $\Lambda$ is the largest nilpotent two-sided ideal in $\Lambda$. We consider the ${\rm radical}$ of tensor product of finite dimensional algebras.
\begin{proposition}\label{prop-rad-of-tensor-product}
Assume that $\Lambda,\Gamma$ are finite dimensional algebras over field $K$.  Then $(\Lambda/\rad \Lambda) \otimes (\Gamma/\rad \Gamma)$ is a semisimple algebra if and only if
$\rad(\Lambda \otimes \Gamma) =\rad\Lambda \otimes \Gamma  + \Lambda \otimes \rad\Gamma$ as a ideal of $\Lambda \otimes \Gamma$.
\end{proposition}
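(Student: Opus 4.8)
\emph{Proof plan.} Put $J := \rad\Lambda\otimes\Gamma + \Lambda\otimes\rad\Gamma$, a two-sided ideal of $\Lambda\otimes\Gamma$. The strategy is to prove both implications at once by establishing the chain of equivalences
$$\rad(\Lambda\otimes\Gamma)=J \iff \rad\big((\Lambda\otimes\Gamma)/J\big)=0 \iff (\Lambda/\rad\Lambda)\otimes(\Gamma/\rad\Gamma)\text{ is semisimple},$$
so the real work is to identify the quotient $(\Lambda\otimes\Gamma)/J$ and to locate $J$ inside $\rad(\Lambda\otimes\Gamma)$.

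First I would identify the quotient. Applying Proposition \ref{prop-Tensor-product-of-exact-sequence} to the canonical exact sequences $0\to\rad\Lambda\to\Lambda\to\Lambda/\rad\Lambda\to 0$ and $0\to\rad\Gamma\to\Gamma\to\Gamma/\rad\Gamma\to 0$, and noting that the inclusions stay injective after tensoring over the field $K$, the submodule appearing there is precisely $J$; hence there is a short exact sequence $0\to J\to\Lambda\otimes\Gamma\to(\Lambda/\rad\Lambda)\otimes(\Gamma/\rad\Gamma)\to 0$ whose surjection is a $K$-algebra homomorphism (check on elementary tensors). Thus $(\Lambda\otimes\Gamma)/J\cong(\Lambda/\rad\Lambda)\otimes(\Gamma/\rad\Gamma)$ as $K$-algebras.

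Next I would show $J$ is nilpotent. Choose $a,b\geq 1$ with $(\rad\Lambda)^a=0=(\rad\Gamma)^b$ and write $J=A+B$ with $A=\rad\Lambda\otimes\Gamma$ and $B=\Lambda\otimes\rad\Gamma$. From $(x\otimes y)(x'\otimes y')=xx'\otimes yy'$ together with the fact that $(\rad\Lambda)^p$ and $(\rad\Gamma)^q$ are two-sided ideals, any product of $p$ factors drawn from $A$ and $q$ factors drawn from $B$, in any order, lies in $(\rad\Lambda)^p\otimes(\rad\Gamma)^q$. Expanding $J^{a+b}=(A+B)^{a+b}$ as a sum of such products with $p+q=a+b$, each summand has $p\geq a$ or $q\geq b$ and hence vanishes, so $J^{a+b}=0$. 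Since a nilpotent two-sided ideal is contained in the Jacobson radical, $J\subseteq\rad(\Lambda\otimes\Gamma)$.

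Finally I would conclude. Since $J$ is a nilpotent ideal contained in $\rad(\Lambda\otimes\Gamma)$, we have $\rad\big((\Lambda\otimes\Gamma)/J\big)=\rad(\Lambda\otimes\Gamma)/J$, and by the first step its left-hand side equals $\rad\big((\Lambda/\rad\Lambda)\otimes(\Gamma/\rad\Gamma)\big)$. As a finite dimensional $K$-algebra is semisimple exactly when its radical vanishes, and $J\subseteq\rad(\Lambda\otimes\Gamma)$, the displayed equivalences follow, which is the assertion. The only step carrying genuine content is the nilpotence of $J$: the point is that $\rad\Lambda\otimes\Gamma$ and $\Lambda\otimes\rad\Gamma$ multiply in a "bigraded" fashion, forcing any product of length $a+b$ into some $(\rad\Lambda)^p\otimes(\rad\Gamma)^q$ with $p\geq a$ or $q\geq b$; the rest is bookkeeping with the standard facts that nilpotent ideals lie in the radical, that $\rad(R/J)=\rad(R)/J$ when $J\subseteq\rad R$, and that "semisimple" means "radical zero".
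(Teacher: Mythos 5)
Your proposal is correct and follows essentially the same route as the paper: both rest on the exact sequence from Proposition \ref{prop-Tensor-product-of-exact-sequence} identifying $(\Lambda\otimes\Gamma)/J$ with $(\Lambda/\rad\Lambda)\otimes(\Gamma/\rad\Gamma)$, together with the nilpotence of the two summands of $J$ to get $J\subseteq\rad(\Lambda\otimes\Gamma)$. The only (harmless) difference is organizational: you prove $J$ itself is nilpotent and package both implications at once via $\rad(R/J)=\rad(R)/J$, whereas the paper argues the two directions separately, using for the forward direction that the surjection $\beta$ carries $\rad(\Lambda\otimes\Gamma)$ into the radical of the target.
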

\begin{proof}
Noted that two exact sequences $0 \xrightarrow{} \rad\Lambda \xrightarrow{} \Lambda \xrightarrow{} \Lambda/\rad \Lambda \xrightarrow{} 0$
and  $0 \xrightarrow{} \rad\Gamma \xrightarrow{} \Gamma \xrightarrow{} \Gamma/\rad \Gamma \xrightarrow{} 0$. By Proposition \ref{prop-Tensor-product-of-exact-sequence}, the following sequence is exact
\begin{gather}\label{eq-exact-of-rad}
0 \xrightarrow{} \rad\Lambda \otimes \Gamma  + \Lambda \otimes \rad\Gamma \xrightarrow{} \Lambda \otimes \Gamma \xrightarrow{\beta} (\Lambda/\rad \Lambda) \otimes (\Gamma/\rad \Gamma) \xrightarrow{} 0.
\end{gather}

"$\Longleftarrow$" Suppose $\rad(\Lambda \otimes \Gamma) =\rad\Lambda \otimes \Gamma  + \Lambda \otimes \rad\Gamma$ as a ideal of $\Lambda \otimes \Gamma$. By the exact sequence (\ref{eq-exact-of-rad}), we get
\begin{align*}
(\Lambda \otimes \Gamma)/ {\rad(\Lambda \otimes \Gamma)} =  (\Lambda \otimes \Gamma) / ( \rad\Lambda \otimes \Gamma+ \Lambda \otimes \rad\Gamma )
                                                        \cong (\Lambda/\rad \Lambda) \otimes (\Gamma/\rad \Gamma).
\end{align*}
Hence $(\Lambda/\rad \Lambda) \otimes (\Gamma/\rad \Gamma)$ is a semisimple algebra.

"$\Longrightarrow$" Assume $(\Lambda/\rad \Lambda) \otimes (\Gamma/\rad \Gamma)$ is a semisimple algebra, then $\rad((\Lambda/\rad \Lambda) \otimes (\Gamma/\rad \Gamma)) =0$. Because $\rad\Lambda$ is a nilpotent ideal,
 so $\rad\Lambda \otimes \Gamma$ is a nilpotent ideal of $\Lambda \otimes \Gamma$, hence $\rad\Lambda \otimes \Gamma \subseteq \rad(\Lambda \otimes \Gamma)$. Similarly, $\Lambda \otimes \rad\Gamma \subseteq \rad(\Lambda \otimes \Gamma)$ since $\rad\Gamma$ is a nilpotent ideal.
Therefore $\rad\Lambda \otimes \Gamma  + \Lambda \otimes \rad\Gamma   \subseteq  \rad(\Lambda \otimes \Gamma)$.
It is enough to show that $\rad(\Lambda \otimes \Gamma) \subseteq  \rad\Lambda \otimes \Gamma  + \Lambda \otimes \rad\Gamma$.

By the exact sequence (\ref{eq-exact-of-rad}), $\beta(\rad(\Lambda \otimes \Gamma)) \subseteq \rad((\Lambda/\rad \Lambda) \otimes (\Gamma/\rad \Gamma)) =0$,  then $\beta(\rad(\Lambda \otimes \Gamma)) =0$, so $\rad(\Lambda \otimes \Gamma) \subseteq  \rad\Lambda \otimes \Gamma  + \Lambda \otimes \rad\Gamma$ and finish the proof.
\end{proof}

\begin{lemma}\label{lemma-characterization-of-semisimple} \cite[Chapter \uppercase\expandafter{\romannumeral1}. Wedderburn-Artin theorem 3.4] {ASS}
Assume that $\Lambda$ is a finite dimensional algebra over algebraically closed field $K$.
Then $\Lambda$ is a semisimple algebra if and only if there exist positive integers $m_{1}$,$m_{2}$,$\dots$,$m_{s}$ and an algebra isomorphism
$$
   \Lambda \cong  \mathbb{M}_{m_{1}}(K)  \bigoplus  \mathbb{M}_{m_{2}}(K)  \bigoplus \dots  \bigoplus  \mathbb{M}_{m_{s}}(K)
$$
where $\mathbb{M}_{m_{i}}(K)$, $1 \leq i \leq s$ are matrix algebras consisting of all $m_{i} \times m_{i}$ square matrices over field $K$.
\end{lemma}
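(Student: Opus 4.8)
The plan is to establish the two implications separately; only the forward direction ``$\Longrightarrow$'' actually uses that $K$ is algebraically closed, and there the decisive input is Schur's lemma.

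For ``$\Longleftarrow$'', I would first recall that each matrix algebra $\mathbb{M}_{m_i}(K)$ is a simple algebra (it has no proper nonzero two-sided ideal, by a direct check on matrix units) and, being artinian with zero Jacobson radical, is a semisimple algebra; concretely, as a left module over itself $\mathbb{M}_{m_i}(K)$ decomposes into its $m_i$ column modules, each of which is simple. Since the Jacobson radical of a finite direct product of algebras is the product of the radicals, $\rad\big(\mathbb{M}_{m_1}(K)\bigoplus\cdots\bigoplus\mathbb{M}_{m_s}(K)\big)=0$, so this direct product is semisimple, and hence so is $\Lambda$.

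For ``$\Longrightarrow$'', suppose $\Lambda$ is semisimple. Then the regular module ${}_{\Lambda}\Lambda$ is a semisimple module, so I would write $\Lambda \cong S_1^{m_1} \bigoplus \cdots \bigoplus S_s^{m_s}$ with $S_1,\dots,S_s$ pairwise non-isomorphic simple $\Lambda$-modules and each $m_i\geq 1$. Using the canonical isomorphism $\Lambda^{op}\cong\End_{\Lambda}({}_{\Lambda}\Lambda)$ given by right multiplication, together with Schur's lemma ($\Hom_{\Lambda}(S_i,S_j)=0$ for $i\neq j$), I obtain
$$\Lambda^{op}\cong\End_{\Lambda}\Big(\bigoplus_{i=1}^{s}S_i^{m_i}\Big)\cong\prod_{i=1}^{s}\End_{\Lambda}\big(S_i^{m_i}\big)\cong\prod_{i=1}^{s}\mathbb{M}_{m_i}\big(\End_{\Lambda}(S_i)\big).$$
Here is the one place the hypothesis on $K$ enters: each $\End_{\Lambda}(S_i)$ is a division ring by Schur and is finite dimensional over $K$ since $S_i$ is, so for $d\in\End_{\Lambda}(S_i)$ the subfield $K[d]$ is a finite field extension of the algebraically closed field $K$, forcing $d\in K$; hence $\End_{\Lambda}(S_i)\cong K$. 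Therefore $\Lambda^{op}\cong\prod_{i=1}^{s}\mathbb{M}_{m_i}(K)$, and passing to opposite algebras via the transpose anti-isomorphism $\mathbb{M}_{m_i}(K)^{op}\cong\mathbb{M}_{m_i}(K)$ yields $\Lambda\cong\mathbb{M}_{m_1}(K)\bigoplus\cdots\bigoplus\mathbb{M}_{m_s}(K)$, as claimed.

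There is no genuine obstacle here: this is the classical Wedderburn--Artin theorem and may simply be quoted from \cite[Chapter \uppercase\expandafter{\romannumeral1}.3]{ASS}. If a self-contained argument is preferred, the only subtle point is the identification $\End_{\Lambda}(S_i)\cong K$, which is exactly where algebraic closedness of $K$ is indispensable — over a non-closed field the blocks would instead be matrix algebras over (possibly noncommutative) finite dimensional division $K$-algebras.
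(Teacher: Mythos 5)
Your proof is correct: the backward direction via simplicity/vanishing radical of matrix algebras, and the forward direction via $\Lambda^{op}\cong\End_{\Lambda}({}_{\Lambda}\Lambda)$, Schur's lemma, and the identification $\End_{\Lambda}(S_i)\cong K$ over an algebraically closed field, is the standard Wedderburn--Artin argument. The paper itself supplies no proof and simply cites \cite[Chapter \uppercase\expandafter{\romannumeral1}, Theorem 3.4]{ASS}, which is also the option you note at the end, so there is nothing to reconcile.
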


The following Proposition suggested that under the condition of algebraically closed field, tensor products preserve semisimple algebras.
\begin{proposition}\label{prop-tensor-product-preserve-semisimple}
Assume that $\Lambda,\Gamma$ are finite dimensional algebras over field $K$.  If $K$ is algebraically closed, then $\Lambda$, $\Gamma$ are semisimple algebras if and only if $\Lambda \otimes \Gamma$ is semisimple.
\end{proposition}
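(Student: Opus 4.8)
The plan is to prove both implications by reducing to the structure theorem for semisimple algebras over an algebraically closed field (Lemma \ref{lemma-characterization-of-semisimple}), together with the behaviour of the radical under tensor products established in Proposition \ref{prop-rad-of-tensor-product}.

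For the direction ``$\Lambda,\Gamma$ semisimple $\Longrightarrow$ $\Lambda\otimes\Gamma$ semisimple'', I would first observe that by Lemma \ref{lemma-characterization-of-semisimple} each of $\Lambda$ and $\Gamma$ decomposes as a finite direct sum of full matrix algebras $\mathbb{M}_{m_i}(K)$, respectively $\mathbb{M}_{n_j}(K)$. Since tensor product over $K$ distributes over finite direct sums, $\Lambda\otimes\Gamma$ is a finite direct sum of the algebras $\mathbb{M}_{m_i}(K)\otimes\mathbb{M}_{n_j}(K)$. Now I would use the standard isomorphism $\mathbb{M}_{p}(K)\otimes\mathbb{M}_{q}(K)\cong\mathbb{M}_{pq}(K)$ of $K$-algebras (which holds because $\mathbb{M}_p(K)\otimes\mathbb{M}_q(K)\cong\End_K(K^p)\otimes\End_K(K^q)\cong\End_K(K^p\otimes K^q)\cong\End_K(K^{pq})$, the middle isomorphism being the canonical one, valid here since the spaces are finite dimensional). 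Hence $\Lambda\otimes\Gamma$ is again a finite direct sum of full matrix algebras over $K$, so it is semisimple by the converse part of Lemma \ref{lemma-characterization-of-semisimple}. Note this argument uses that $K$ is the ground field of the matrix algebras, which is exactly what the hypothesis ``$K$ algebraically closed'' buys us via Lemma \ref{lemma-characterization-of-semisimple}.

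For the direction ``$\Lambda\otimes\Gamma$ semisimple $\Longrightarrow$ $\Lambda,\Gamma$ semisimple'', I would argue contrapositively (or directly via the radical). Since $\rad\Lambda$ is nilpotent, $\rad\Lambda\otimes\Gamma$ is a nilpotent ideal of $\Lambda\otimes\Gamma$, hence contained in $\rad(\Lambda\otimes\Gamma)$; similarly $\Lambda\otimes\rad\Gamma\subseteq\rad(\Lambda\otimes\Gamma)$. If $\Lambda\otimes\Gamma$ is semisimple then $\rad(\Lambda\otimes\Gamma)=0$, so $\rad\Lambda\otimes\Gamma=0$ and $\Lambda\otimes\rad\Gamma=0$. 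Since $\Gamma\neq 0$ (it is a unital algebra, so $1\in\Gamma$ and $K\cdot 1\neq 0$), the vanishing $\rad\Lambda\otimes\Gamma=0$ forces $\rad\Lambda=0$, i.e.\ $\Lambda$ is semisimple; symmetrically $\Gamma$ is semisimple. (Here one uses that $-\otimes_K-$ is exact over a field, so $\rad\Lambda\otimes\Gamma=0$ with $\Gamma\neq0$ implies $\rad\Lambda=0$.) This half does not even need $K$ algebraically closed.

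The main obstacle is the first direction, and specifically the identification $\mathbb{M}_p(K)\otimes_K\mathbb{M}_q(K)\cong\mathbb{M}_{pq}(K)$ together with the reduction of $\Lambda$ and $\Gamma$ to matrix-algebra form; both ingredients rely essentially on $K$ being algebraically closed (the reduction via Lemma \ref{lemma-characterization-of-semisimple}). Over a non-algebraically-closed field the statement can fail — a tensor product of two fields (division algebras) need not be semisimple, as inseparability phenomena show — so the hypothesis on $K$ is genuinely used in exactly this step. Alternatively one could phrase the first direction without the matrix decomposition by invoking Proposition \ref{prop-rad-of-tensor-product}: if $\Lambda,\Gamma$ are semisimple then $\rad\Lambda=\rad\Gamma=0$, so $(\Lambda/\rad\Lambda)\otimes(\Gamma/\rad\Gamma)=\Lambda\otimes\Gamma$; one then still needs to know this tensor product is semisimple, which is precisely the content requiring Lemma \ref{lemma-characterization-of-semisimple}, so the matrix-algebra computation cannot be circumvented.
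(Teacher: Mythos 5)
Your proposal is correct and follows essentially the same route as the paper: the forward direction via the Wedderburn--Artin decomposition of Lemma \ref{lemma-characterization-of-semisimple} and the isomorphism $\mathbb{M}_{p}(K)\otimes\mathbb{M}_{q}(K)\cong\mathbb{M}_{pq}(K)$ (you prove this via $\End_K(K^p)\otimes\End_K(K^q)\cong\End_K(K^{pq})$ where the paper writes down explicit matrix units, a cosmetic difference), and the converse via the fact that $\rad\Lambda\otimes\Gamma$ is a nilpotent ideal contained in $\rad(\Lambda\otimes\Gamma)$. Your converse is in fact marginally leaner than the paper's, which detours through Proposition \ref{prop-rad-of-tensor-product} before invoking the same nilpotency containment, whereas you go straight from $\rad\Lambda\otimes\Gamma\subseteq\rad(\Lambda\otimes\Gamma)=0$ and faithfulness of $-\otimes_K\Gamma$ to $\rad\Lambda=0$.
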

\begin{proof}
"$\Longrightarrow$"
Assume $\Lambda,\Gamma$ be finite dimensional semisimple algebras.
By Lemma \ref{lemma-characterization-of-semisimple}, there exist positive integers $m_{1}$,$m_{2}$,$\dots$,$m_{s}$
and $n_{1}$,$n_{2}$,$\dots$,$n_{t}$ such that
$
   \Lambda \cong   \bigoplus\limits_{i=1}^{s}   \mathbb{M}_{m_{i}}(K)
$ and
$
   \Gamma \cong  \bigoplus\limits_{j=1}^{t}  \mathbb{M}_{n_{j}}(K).
$
Hence we get
\begin{align*}
     \Lambda \otimes \Gamma
   \cong  ( \bigoplus_{i=1}^{s}   \mathbb{M}_{m_{i}}(K))   \bigotimes    (\bigoplus_{j=1}^{t}  \mathbb{M}_{n_{j}}(K))
   \cong \bigoplus_{i=1}^{s} \bigoplus_{j=1}^{t}   \mathbb{M}_{m_{i}}(K)  \bigotimes \mathbb{M}_{n_{j}}(K)
\end{align*}

By Lemma \ref{lemma-characterization-of-semisimple}, it is enough to show that $\mathbb{M}_{m}(K)  \bigotimes \mathbb{M}_{n}(K)  \cong  \mathbb{M}_{mn}(K)$ for any positive integers $m,n$.
Suppose a $K$-basis of $\mathbb{M}_{m}(K)$ is the set of matrices $e_{ij}, 1 \leq i, j \leq m$, where $e_{ij}$ has the coefficient 1 in the position $(i, j)$ and the coefficient $0$ elsewhere. Similarly, Suppose a $K$-basis of $\mathbb{M}_{n}(K)$ is the set of matrices $f_{kl}, 1 \leq k, l \leq n$ and a $K$-basis of $\mathbb{M}_{mn}(K)$ is the set of matrices $h_{vw}, 1 \leq v, w \leq mn$ .
Then $e_{ij} \otimes f_{kl},  1 \leq i, j \leq m, 1 \leq i, j \leq n$ is a $K$-basis of $\mathbb{M}_{m}(K)  \bigotimes \mathbb{M}_{n}(K)$.
Let $\theta(e_{ij} \otimes f_{kl}) = h_{i+(k-1)m, j+(l-1)m}$, it is easy check that $\theta: \mathbb{M}_{m}(K)  \bigotimes \mathbb{M}_{n}(K)  \xrightarrow{}  \mathbb{M}_{mn}(K)$ is an algebra isomorphism. Then we get the desired result.

"$\Longleftarrow$"
Suppose $\Lambda \otimes \Gamma$ is a semisimple algebra, then $\rad(\Lambda \otimes \Gamma) =0$. Because $(\Lambda/\rad \Lambda)$, $(\Gamma/\rad \Gamma)$ are semisimple, by the proof of "only if part", the algebra $(\Lambda/\rad \Lambda) \otimes (\Gamma/\rad \Gamma)$ is semisimple. Hence by Proposition \ref{prop-rad-of-tensor-product}, the radical $\rad(\Lambda \otimes \Gamma) =\rad\Lambda \otimes \Gamma  + \Lambda \otimes \rad\Gamma$.
If $\Lambda$ is not semisimple, then $\rad \Lambda \neq 0$, so $0 \neq \rad\Lambda \otimes \Gamma \subseteq \rad(\Lambda \otimes \Gamma)$, a contradiction. Thus $\Lambda$ is semisimple. By symmetry, $\Gamma$ is also a semisimple algebra.
\end{proof}

The following result is a directly consequence of Proposition \ref{prop-rad-of-tensor-product} and proposition \ref{prop-tensor-product-preserve-semisimple}.
\begin{corollary}\label{coro-semisimple-and-radical}
Assume that $\Lambda,\Gamma$ are finite dimensional algebras over field $K$.  If $K$ is algebraically closed, then
$(\Lambda/\rad \Lambda) \otimes (\Gamma/\rad \Gamma)$ is a semisimple algebra and $\rad(\Lambda \otimes \Gamma) =\rad\Lambda \otimes \Gamma  + \Lambda \otimes \rad\Gamma$ as a ideal of $\Lambda \otimes \Gamma$.
\end{corollary}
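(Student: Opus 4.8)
The plan is to combine the two preceding propositions directly, since the corollary is engineered to be their immediate consequence. First I would record the elementary fact that for any finite dimensional $K$-algebra $\Lambda$ the quotient $\Lambda/\rad\Lambda$ is a finite dimensional semisimple $K$-algebra; this is the defining property of the Jacobson radical of an Artinian ring. Hence both $\Lambda/\rad\Lambda$ and $\Gamma/\rad\Gamma$ are finite dimensional semisimple $K$-algebras.

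Next I would apply Proposition \ref{prop-tensor-product-preserve-semisimple}, taking its two algebras to be $\Lambda/\rad\Lambda$ and $\Gamma/\rad\Gamma$ rather than $\Lambda$ and $\Gamma$ themselves. Since $K$ is algebraically closed and both factors are semisimple, the ``$\Longrightarrow$'' direction of that proposition yields that $(\Lambda/\rad\Lambda)\otimes(\Gamma/\rad\Gamma)$ is a semisimple algebra, which is the first assertion of the corollary.

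Finally, having established that $(\Lambda/\rad\Lambda)\otimes(\Gamma/\rad\Gamma)$ is semisimple, I would invoke the ``$\Longleftarrow$'' implication of Proposition \ref{prop-rad-of-tensor-product} verbatim, which then gives $\rad(\Lambda\otimes\Gamma)=\rad\Lambda\otimes\Gamma+\Lambda\otimes\rad\Gamma$ as an ideal of $\Lambda\otimes\Gamma$. This completes the argument.

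There is essentially no obstacle here; the only point deserving a moment's attention is that the two cited propositions are being applied to the quotient algebras $\Lambda/\rad\Lambda$ and $\Gamma/\rad\Gamma$, which are legitimate finite dimensional $K$-algebras, so all hypotheses are met. One could alternatively bypass Proposition \ref{prop-tensor-product-preserve-semisimple} and argue directly via Lemma \ref{lemma-characterization-of-semisimple}, writing each quotient as a direct sum of matrix algebras over $K$ and using $\mathbb{M}_m(K)\otimes\mathbb{M}_n(K)\cong\mathbb{M}_{mn}(K)$, but this merely reproves a special case of that proposition, so the cleaner route is to cite it.
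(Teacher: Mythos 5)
Your argument is exactly the paper's intended one: the paper gives no separate proof, stating only that the corollary is a direct consequence of Propositions \ref{prop-rad-of-tensor-product} and \ref{prop-tensor-product-preserve-semisimple}, and you supply precisely that deduction (semisimplicity of the quotients, the forward direction of Proposition \ref{prop-tensor-product-preserve-semisimple} applied to $\Lambda/\rad\Lambda$ and $\Gamma/\rad\Gamma$, then Proposition \ref{prop-rad-of-tensor-product}). The only nitpick is that the implication of Proposition \ref{prop-rad-of-tensor-product} you need (semisimple quotient tensor product $\Rightarrow$ radical formula) is the one the paper labels ``$\Longrightarrow$'', not ``$\Longleftarrow$''; the mathematics is otherwise correct and complete.
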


 Particularly, if we assume that $\Lambda,\Gamma$ are quotient algebras of path algebras over any field $K$ modulo some admissible ideals, then $(\Lambda/\rad \Lambda) \otimes (\Gamma/\rad \Gamma)$ is semisimple.
Next we need to discuss the tensor products of basic algebras.

\begin{lemma}\label{lem-basic-algebras}\cite[Chapter \uppercase\expandafter{\romannumeral1}. Proposition 6.2] {ASS}
Assume that $\Lambda$ is a finite dimensional algebra over algebraically closed field $K$ with radical $\rad\Lambda$.
Then $\Lambda$ is basic if and only if the algebra $\Lambda/\rad \Lambda$ is isomorphic to a direct sum $K^{\oplus n}$
of $n$ copies of $K$ for some integer $n$.
\end{lemma}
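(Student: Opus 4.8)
The plan is to compare two decompositions of the same object: the Wedderburn–Artin decomposition of the semisimple quotient $\Lambda/\rad\Lambda$ as an algebra, and a decomposition of $_{\Lambda}\Lambda$ into indecomposable projective modules; the point is that the multiplicities occurring in the two coincide.

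First I would use that $\rad\Lambda$ is the largest nilpotent two-sided ideal, so $\Lambda/\rad\Lambda$ is semisimple, and apply Lemma \ref{lemma-characterization-of-semisimple}: there are positive integers $m_1,\dots,m_s$ and an algebra isomorphism $\Lambda/\rad\Lambda \cong \bigoplus_{i=1}^{s}\mathbb{M}_{m_i}(K)$. The observation I want to isolate is that the left regular module of $\mathbb{M}_{m}(K)$ is a direct sum of $m$ copies of its unique simple module, a module of $K$-dimension $m$. Hence, as a left $\Lambda$-module, $\Lambda/\rad\Lambda \cong \bigoplus_{i=1}^{s} S_i^{\oplus m_i}$, where $S_1,\dots,S_s$ is a complete irredundant list of the simple $\Lambda$-modules and $\dim_{K} S_i = m_i$.

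Next I would fix a decomposition $_{\Lambda}\Lambda \cong \bigoplus_{i=1}^{s} P_i^{\oplus a_i}$ into indecomposable projectives, where $P_i$ is the projective cover of $S_i$ and $a_i \geq 1$; by definition, $\Lambda$ is basic exactly when $a_i = 1$ for all $i$. Passing to tops, using that $P_i/\rad P_i \cong S_i$ and that the radical commutes with finite direct sums, gives $\Lambda/\rad\Lambda \cong \bigoplus_{i=1}^{s} S_i^{\oplus a_i}$ as a left $\Lambda$-module. Since the $S_i$ are pairwise non-isomorphic, Krull–Schmidt makes these multiplicities well defined, so comparing with the previous paragraph forces $a_i = m_i$ for every $i$.

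It then follows that $\Lambda$ is basic $\iff a_i = 1$ for all $i$ $\iff m_i = 1$ for all $i$ $\iff \Lambda/\rad\Lambda \cong \bigoplus_{i=1}^{s}\mathbb{M}_{1}(K) \cong K^{\oplus s}$, which is the claim. The single place where algebraic closedness of $K$ genuinely enters is the appeal to Lemma \ref{lemma-characterization-of-semisimple}: over an arbitrary field the Wedderburn blocks are matrix algebras over division rings $D_i$, and "$\Lambda$ basic" would only translate into "$\Lambda/\rad\Lambda$ is a finite product of division rings", not into "$\cong K^{\oplus s}$" — algebraic closedness is exactly what collapses the $D_i$ to $K$. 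Everything else is routine Krull–Schmidt bookkeeping; since the statement is Proposition I.6.2 of \cite{ASS}, in the text I would simply cite it.
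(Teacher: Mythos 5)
The paper gives no proof of this lemma; it is simply quoted from \cite[Chapter I, Proposition 6.2]{ASS}. Your argument is correct and is essentially the standard textbook proof of that result: you compare, via Krull--Schmidt, the multiplicities in the Wedderburn--Artin decomposition of $\Lambda/\rad\Lambda$ (viewed as a left module over itself) with those in the decomposition of ${}_{\Lambda}\Lambda$ into indecomposable projectives, and your closing remark correctly identifies the appeal to Lemma \ref{lemma-characterization-of-semisimple} as the one place where algebraic closedness of $K$ is used.
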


Now we show that the tensor product of basic algebras is also a basic algebra.

\begin{proposition}\label{prop-tensor-of-basic-algebras}
Assume that $\Lambda,\Gamma$ are two algebras over field $K$. If $K$ is algebraically closed,
then $\Lambda,\Gamma$ are two basic finite dimensional algebras if and only if the algebra $\Lambda \otimes\Gamma$ is a basic finite dimensional algebra.
\end{proposition}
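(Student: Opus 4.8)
My plan is to test the basic property on the semisimple quotient $(\Lambda\otimes\Gamma)/\rad(\Lambda\otimes\Gamma)$ and compare it with $\Lambda/\rad\Lambda$ and $\Gamma/\rad\Gamma$ through the radical identity for tensor products. Since $K$ is algebraically closed, Corollary \ref{coro-semisimple-and-radical} gives $\rad(\Lambda\otimes\Gamma)=\rad\Lambda\otimes\Gamma+\Lambda\otimes\rad\Gamma$, and then the exact sequence (\ref{eq-exact-of-rad}) yields the isomorphism
$$(\Lambda\otimes\Gamma)/\rad(\Lambda\otimes\Gamma)\;\cong\;(\Lambda/\rad\Lambda)\otimes(\Gamma/\rad\Gamma),$$
which will be the backbone of both implications. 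The criterion applied at both ends is Lemma \ref{lem-basic-algebras}: a finite dimensional algebra $A$ over the algebraically closed field $K$ is basic if and only if $A/\rad A\cong K^{\oplus k}$ for some integer $k$.

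For the ``only if'' part I would assume $\Lambda$ and $\Gamma$ basic. By Lemma \ref{lem-basic-algebras} there are integers $a,b$ with $\Lambda/\rad\Lambda\cong K^{\oplus a}$ and $\Gamma/\rad\Gamma\cong K^{\oplus b}$; since $K\otimes_K K\cong K$ this gives $(\Lambda/\rad\Lambda)\otimes(\Gamma/\rad\Gamma)\cong K^{\oplus ab}$. The displayed isomorphism then shows $(\Lambda\otimes\Gamma)/\rad(\Lambda\otimes\Gamma)\cong K^{\oplus ab}$, and Lemma \ref{lem-basic-algebras} applied to $\Lambda\otimes\Gamma$ concludes that it is basic.

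For the ``if'' part I would assume $\Lambda\otimes\Gamma$ basic, so that $(\Lambda\otimes\Gamma)/\rad(\Lambda\otimes\Gamma)\cong K^{\oplus N}$ for some $N$ by Lemma \ref{lem-basic-algebras}. The algebras $\Lambda/\rad\Lambda$ and $\Gamma/\rad\Gamma$ are finite dimensional semisimple over the algebraically closed $K$, so Lemma \ref{lemma-characterization-of-semisimple} lets me write $\Lambda/\rad\Lambda\cong\bigoplus_{i=1}^{s}\mathbb{M}_{m_i}(K)$ and $\Gamma/\rad\Gamma\cong\bigoplus_{j=1}^{t}\mathbb{M}_{n_j}(K)$. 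Using $\mathbb{M}_{m}(K)\otimes\mathbb{M}_{n}(K)\cong\mathbb{M}_{mn}(K)$ (established inside the proof of Proposition \ref{prop-tensor-product-preserve-semisimple}), the displayed isomorphism becomes $\bigoplus_{i,j}\mathbb{M}_{m_i n_j}(K)\cong\bigoplus_{k=1}^{N}\mathbb{M}_{1}(K)$. By the uniqueness of the Wedderburn--Artin decomposition the multiset of matrix sizes is an isomorphism invariant, so every $m_i n_j=1$; hence every $m_i=1$ and every $n_j=1$, i.e. $\Lambda/\rad\Lambda\cong K^{\oplus s}$ and $\Gamma/\rad\Gamma\cong K^{\oplus t}$, and Lemma \ref{lem-basic-algebras} shows that $\Lambda$ and $\Gamma$ are basic.

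The one genuinely delicate step is this last one: pulling the individual matrix sizes back out of a tensor product requires the uniqueness (not merely existence) clause of Wedderburn--Artin, whereas everything else is bookkeeping around Corollary \ref{coro-semisimple-and-radical}. I would also record, in the spirit of the remark following Corollary \ref{coro-semisimple-and-radical}, that if instead $\Lambda,\Gamma$ are quotients of path algebras over an arbitrary field modulo admissible ideals, then $\Lambda/\rad\Lambda$ and $\Gamma/\rad\Gamma$ are already direct sums of copies of $K$, so the same comparison of semisimple quotients works over any field without the algebraically closed hypothesis.
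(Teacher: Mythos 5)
Your proposal is correct and follows essentially the same route as the paper: both reduce to the semisimple quotient via $\rad(\Lambda\otimes\Gamma)=\rad\Lambda\otimes\Gamma+\Lambda\otimes\rad\Gamma$, identify $(\Lambda\otimes\Gamma)/\rad(\Lambda\otimes\Gamma)\cong(\Lambda/\rad\Lambda)\otimes(\Gamma/\rad\Gamma)\cong\bigoplus_{i,j}\mathbb{M}_{m_in_j}(K)$ using $\mathbb{M}_m(K)\otimes\mathbb{M}_n(K)\cong\mathbb{M}_{mn}(K)$, and conclude with Lemma \ref{lem-basic-algebras} from the observation that $m_in_j=1$ forces $m_i=n_j=1$. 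Your explicit appeal to the uniqueness clause of Wedderburn--Artin in the converse direction is a point the paper leaves implicit, but it is the same argument.
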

\begin{proof}
Note that $\Lambda,\Gamma$ are two finite dimensional algebras if and only if $\Lambda \otimes\Gamma$ is a finite dimensional algebra.
For finite dimensional algebras $\Lambda,\Gamma$ over algebraically closed field $K$, by Lemma \ref{lemma-characterization-of-semisimple}, there exist positive integers $m_{1},m_{2},\dots,m_{s}$ and $n_{1},n_{2},\dots,n_{t}$ such that algebra isomorphisms $\Lambda/\rad \Lambda \cong  \bigoplus\limits_{i=1}^{s}   \mathbb{M}_{m_{i}}(K)$ and $\Gamma/\rad \Gamma \cong  \bigoplus\limits_{j=1}^{t}  \mathbb{M}_{n_{j}}(K)$.
It is follows from Proposition \ref{prop-rad-of-tensor-product} and Proposition \ref{prop-tensor-product-preserve-semisimple} that
\begin{align*}
(\Lambda \otimes \Gamma)/ {\rad(\Lambda \otimes \Gamma)} & \cong (\Lambda/\rad \Lambda) \bigotimes (\Gamma/\rad \Gamma) \\
              &\cong  ( \bigoplus_{i=1}^{s}   \mathbb{M}_{m_{i}}(K))   \bigotimes    (\bigoplus_{j=1}^{t}  \mathbb{M}_{n_{j}}(K))\\
              &\cong \bigoplus_{i=1}^{s} \bigoplus_{j=1}^{t}   \mathbb{M}_{m_{i}}(K)  \bigotimes \mathbb{M}_{n_{j}}(K)\\
              &\cong \bigoplus_{i=1}^{s} \bigoplus_{j=1}^{t} \mathbb{M}_{m_{i}n_{j}}(K)
\end{align*}
Since for two positive integers $m,n$, it is easily follows that $\mathbb{M}_{mn}(K) \cong K$ if and only if $m=n=1$.
Hence $\Lambda \otimes\Gamma$ is a basic finite dimensional algebra if and only if $m_{i}=n_{j}=1$ for any positive integers $m_{i},n_{j}$ if and only if $\Lambda,\Gamma$ are two basic finite dimensional algebras by Lemma \ref{lem-basic-algebras}, the proof is done.
\end{proof}

\subsection{Tensor products and semisimple modules, projective covers}

\begin{lemma}\label{lemma-characterization-of-indecomposable}\cite[Chapter \uppercase\expandafter{\romannumeral1}. Lemma 4.6 and Corollary 4.8] {ASS}
Assume that $\Lambda$ is a  finite dimensional algebra over algebraically closed field $K$. Let $M \in \mod\Lambda$. Then $M$ is indecomposable if and only if $\End_{\Lambda}(M)$ is a  local algebra if and only if $\End_{\Lambda}(M) / \rad({\rm End}_{\Lambda}(M)) \cong K$.
\end{lemma}

We have the following result related to the tensor products of indecomposable modules.

\begin{proposition}\label{prop-tensor-products-of-indecomposable}
Assume that $\Lambda,\Gamma$ are  finite dimensional algebras over algebraically closed field $K$. Let $M \in \mod\Lambda$, $N \in \mod\Gamma$.  Then $M$ and $N$ are indecomposable modules if and only if $M \otimes N$ is an indecomposable $(\Lambda \otimes \Gamma)$-module.
\end{proposition}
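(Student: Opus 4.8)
The plan is to transport the problem to endomorphism algebras. By Proposition \ref{canonical-map}, taking $M'=M$ and $N'=N$, the canonical map gives a $K$-linear isomorphism
$$\End_{\Lambda\otimes\Gamma}(M\otimes N)\;\cong\;\End_{\Lambda}(M)\otimes\End_{\Gamma}(N),$$
and since this map sends $f\otimes g$ to $f\otimes g$ while composition satisfies $(f\otimes g)(f'\otimes g')=(ff')\otimes(gg')$, it is actually an isomorphism of $K$-algebras. So I would put $A=\End_{\Lambda}(M)$ and $B=\End_{\Gamma}(N)$, which are finite dimensional $K$-algebras, and it remains to show that $A\otimes B$ is local if and only if both $A$ and $B$ are local.

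For this I would invoke Corollary \ref{coro-semisimple-and-radical} applied to the pair $A,B$ (not to $\Lambda,\Gamma$): it yields $\rad(A\otimes B)=\rad A\otimes B+A\otimes\rad B$, and hence, exactly as in the proof of Proposition \ref{prop-rad-of-tensor-product} using Proposition \ref{prop-Tensor-product-of-exact-sequence}, a $K$-algebra isomorphism $(A\otimes B)/\rad(A\otimes B)\cong(A/\rad A)\otimes(B/\rad B)$. Over the algebraically closed field $K$, Lemma \ref{lemma-characterization-of-indecomposable} says that $A$ is local iff $A/\rad A\cong K$, and likewise for $B$ and for $A\otimes B\cong\End_{\Lambda\otimes\Gamma}(M\otimes N)$. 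If $A/\rad A\cong K$ and $B/\rad B\cong K$, then $(A\otimes B)/\rad(A\otimes B)\cong K\otimes K\cong K$, so $A\otimes B$ is local and $M\otimes N$ is indecomposable. Conversely, if $M\otimes N$ is indecomposable then it is nonzero, so $M$ and $N$ are nonzero, hence $A/\rad A$ and $B/\rad B$ are nonzero finite dimensional $K$-vector spaces whose tensor product is one-dimensional; since $\dim_K(V\otimes W)=(\dim_K V)(\dim_K W)$, both $A/\rad A$ and $B/\rad B$ are one-dimensional, i.e. $\cong K$, so by Lemma \ref{lemma-characterization-of-indecomposable} the modules $M$ and $N$ are indecomposable.

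The argument involves no substantial computation; the points needing care are that Proposition \ref{canonical-map} is used as an algebra (not merely vector-space) isomorphism, that Corollary \ref{coro-semisimple-and-radical} is invoked for the endomorphism algebras $A$ and $B$ rather than for $\Lambda$ and $\Gamma$, and — crucially — that it is the algebraic closedness of $K$ which makes ``$\End/\rad\cong K$'' the correct reformulation of indecomposability here: without that hypothesis $A\otimes B$ can fail to be local even when $A$ and $B$ are division algebras (for instance $\mathbb{C}\otimes_{\mathbb{R}}\mathbb{C}$), so the statement really does require it.
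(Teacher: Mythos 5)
Your proof is correct, and it follows the same overall strategy as the paper -- reduce to endomorphism algebras via Proposition \ref{canonical-map} and characterize indecomposability by locality of $\End$ as in Lemma \ref{lemma-characterization-of-indecomposable} -- but it handles the key transfer step differently. The paper establishes ``$A\otimes B$ local $\Leftrightarrow$ $A,B$ local'' by citing Lawrence's results (Theorems 3 and 4 of \cite{L76}) on when a tensor product of algebras is local. You instead apply Corollary \ref{coro-semisimple-and-radical} to the endomorphism algebras $A=\End_{\Lambda}(M)$ and $B=\End_{\Gamma}(N)$ to get $(A\otimes B)/\rad(A\otimes B)\cong (A/\rad A)\otimes(B/\rad B)$, and then conclude by the dimension count $\dim_K(V\otimes W)=(\dim_K V)(\dim_K W)$, using that over an algebraically closed field ``local'' means ``$\End/\rad\cong K$.'' Your route has the advantage of being self-contained within the paper's own toolkit (no external citation needed for the locality transfer), and it isolates exactly where algebraic closedness enters; the paper's route is shorter on the page but outsources the main point. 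Two small things you handled correctly that deserve to be made explicit in a write-up: the canonical map of Proposition \ref{canonical-map} is multiplicative on simple tensors and hence an algebra isomorphism (the paper uses this silently too), and in the converse direction one needs $M\otimes N\neq 0$ to guarantee $A/\rad A$ and $B/\rad B$ are nonzero before the dimension count applies.
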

\begin{proof}
Suppose that  $M \otimes N$ is indecomposable. By Proposition \ref{canonical-map} and Lemma \ref{lemma-characterization-of-indecomposable},
$\End_{\Lambda}(M) \otimes \End_{\Gamma}(N) = \End_{\Lambda \otimes \Gamma}(M \otimes N)$  is a local algebra.
It is follows from \cite[Theorem 3] {L76} that $\End_{\Lambda}(M)$ and $\End_{\Gamma}(N)$ are local.
Hence by Proposition \ref{canonical-map}, $M$ and $N$ are indecomposable modules.
Conversely,
assume $M \in \mod\Lambda$, $N \in \mod\Gamma$ are indecomposable modules.
By Lemma \ref{lemma-characterization-of-indecomposable}, $(\End_{\Lambda}(M) / \rad({\rm End}_{\Lambda}(M))) \otimes (\End_{\Gamma}(N) / \rad({\rm End}_{\Gamma}(N))) \cong   K \otimes K  \cong K$ is a local algebra.
By \cite[Theorem 4] {L76}, $\End_{\Lambda}(M) \otimes \End_{\Gamma}(N)$ is a local algebra, so is $\End_{\Lambda \otimes \Gamma}(M \otimes N)$ by Proposition \ref{canonical-map}.
Hence $M \otimes N$ is an indecomposable $(\Lambda \otimes \Gamma)$-module by Lemma \ref{lemma-characterization-of-indecomposable}.
\end{proof}

We need to consider semisimple modules over tensor products of algebras.
\begin{proposition}\label{prop-tensor-product-of-semisimple}
Assume that $\Lambda,\Gamma$ are two finite dimensional algebras over algebraically closed field $K$. Let $M \in \mod{\Lambda}$ and $N \in \mod{\Gamma}$.
\begin{enumerate}
\item $\rad(M \otimes N)=\rad M \otimes N     +   M \otimes \rad N$ as a submodule of $M \otimes N$.

\item $M \otimes N$ is a semisimple $(\Lambda \otimes \Gamma)$-module if and only if $M, N$ are semisimple modules.
\end{enumerate}
\end{proposition}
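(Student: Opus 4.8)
\medskip

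\noindent\textbf{Proof proposal.} The plan is to reduce both statements to the identity $\rad X=(\rad A)\,X$, valid for every finitely generated module $X$ over a finite dimensional algebra $A$, applied with $A=\Lambda\otimes\Gamma$ and $X=M\otimes N$, combined with the computation of $\rad(\Lambda\otimes\Gamma)$ already obtained.

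First I would prove (1). Since $K$ is algebraically closed, Corollary~\ref{coro-semisimple-and-radical} gives $\rad(\Lambda\otimes\Gamma)=\rad\Lambda\otimes\Gamma+\Lambda\otimes\rad\Gamma$, whence
$$\rad(M\otimes N)=(\rad\Lambda\otimes\Gamma)(M\otimes N)+(\Lambda\otimes\rad\Gamma)(M\otimes N).$$
Now $(\rad\Lambda\otimes\Gamma)(M\otimes N)$ is the $K$-span of the elements $rm\otimes gn$ with $r\in\rad\Lambda$, $g\in\Gamma$, $m\in M$, $n\in N$; as $gn$ runs through a spanning set of $\Gamma N=N$ and $rm$ through a spanning set of $(\rad\Lambda)M=\rad M$, this span is exactly the image of $\rad M\otimes N$ under the map $\rad M\otimes N\to M\otimes N$, which is injective because $-\otimes_K-$ is exact (Proposition~\ref{prop-Tensor-product-of-exact-sequence}); thus it coincides with the submodule $\rad M\otimes N$. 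Symmetrically $(\Lambda\otimes\rad\Gamma)(M\otimes N)=M\otimes\rad N$, and (1) follows.

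For (2), recall that a finitely generated module over a finite dimensional algebra is semisimple if and only if its radical vanishes. If $M$ and $N$ are semisimple, then $\rad M=0=\rad N$, so (1) gives $\rad(M\otimes N)=0$ and $M\otimes N$ is semisimple. Conversely, assume $M\otimes N$ is semisimple; the case $M=0$ or $N=0$ being degenerate, we may assume $M\neq0$ and $N\neq0$. Then $\rad(M\otimes N)=0$, so by (1) both submodules $\rad M\otimes N$ and $M\otimes\rad N$ vanish. Since $N\neq0$ we have $\dim_K(V\otimes_K N)=\dim_K V\cdot\dim_K N$ for every $K$-space $V$, so $\rad M\otimes N=0$ forces $\rad M=0$, i.e. $M$ is semisimple; likewise $N$ is semisimple.

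The argument is essentially bookkeeping, and the only genuine input is Corollary~\ref{coro-semisimple-and-radical} (this is where the assumption that $K$ is algebraically closed is used). The step that needs care is the identification of $(\rad\Lambda\otimes\Gamma)(M\otimes N)$ with the \emph{submodule} $\rad M\otimes N$: one must invoke that $N$ is a unital $\Gamma$-module so that $\Gamma N=N$, and the exactness of $-\otimes_K-$ to know that $\rad M\otimes N$ embeds into $M\otimes N$ rather than merely maps to it. The other mild point is excluding the trivial case $M=0$ or $N=0$ in the converse of (2).
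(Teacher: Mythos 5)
Your proof is correct and follows essentially the same route as the paper: both reduce (1) to the identity $\rad(M\otimes N)=\rad(\Lambda\otimes\Gamma)(M\otimes N)$ together with Corollary~\ref{coro-semisimple-and-radical}, and both derive (2) from (1) via the vanishing of the radical. Your write-up is slightly more careful than the paper's on two minor points — justifying that $(\rad\Lambda\otimes\Gamma)(M\otimes N)$ really is the submodule $\rad M\otimes N$ (via exactness of $-\otimes_K-$) and excluding the degenerate case $M=0$ or $N=0$ in the converse of (2) — but the substance is identical.
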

\begin{proof}
(1)Observed that by Corollary \ref{coro-semisimple-and-radical}
 $\rad(\Lambda \otimes \Gamma) =\rad\Lambda \otimes \Gamma  + \Lambda \otimes \rad\Gamma$ as a ideal of $\Lambda \otimes \Gamma$.
Hence,
we get the radical $\rad(M \otimes N)$ of module $M \otimes N$,
\begin{align*}
\rad(M \otimes N) &=(\rad(\Lambda \otimes \Gamma) )(M \otimes N)  \\
                         &=(\rad\Lambda \otimes \Gamma  + \Lambda \otimes \rad\Gamma )(M \otimes N) \\
                       &=(\rad\Lambda \otimes \Gamma )(M \otimes N)  +  (\Lambda \otimes \rad\Gamma )(M \otimes N) \\
                       &= (\rad\Lambda) M \otimes \Gamma N     +   \Lambda M \otimes (\rad\Gamma) N   \\
                       &=\rad M \otimes N     +   M \otimes \rad N
\end{align*}

(2)"$\Longrightarrow$"
Assume $M \otimes N$ is a semisimple $(\Lambda \otimes \Gamma)$-module, then $\rad(M \otimes N) =0$.
If $M$ is not a semisimple module, then $\rad M \neq 0$, this implies $0 \neq \rad M \otimes N  \subseteq \rad(M \otimes N)$ by (1), a contradiction. Hence $M$ is a semisimple module. By symmetry, $N$ is semisimple.

"$\Longrightarrow$"
Assume $M, N$ are semisimple modules, then $\rad M = 0= \rad N$. Consequently, by (1), $\rad(M \otimes N) =0$, therefore $M \otimes N$ is a semisimple $(\Lambda \otimes \Gamma)$-module.
\end{proof}

By Proposition \ref{prop-tensor-products-of-indecomposable} and Proposition \ref{prop-tensor-product-of-semisimple}, we obtain the following characterization of simple modules over tensor products.
\begin{corollary}\label{coro-tensor-product-of-simple}
Assume that $\Lambda,\Gamma$ are two finite dimensional algebras over algebraically closed field $K$. Let $M \in \mod{\Lambda}$ and $N \in \mod{\Gamma}$. Then $M \otimes N$ is a simple $(\Lambda \otimes \Gamma)$-module if and only if $M, N$ are simple modules.
\end{corollary}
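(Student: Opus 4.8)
The plan is to deduce the statement directly from the two preceding propositions together with the elementary fact that, over a finite dimensional algebra, a module is simple if and only if it is simultaneously semisimple and indecomposable. The reason for that fact is that a semisimple module is a finite direct sum of simple submodules, and such a direct sum is indecomposable exactly when it consists of a single (nonzero) summand.

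First I would treat the forward implication. Assuming $M \otimes N$ is a simple $(\Lambda \otimes \Gamma)$-module, it is in particular semisimple, so Proposition \ref{prop-tensor-product-of-semisimple}(2) forces both $M$ and $N$ to be semisimple; it is also indecomposable (being simple), so Proposition \ref{prop-tensor-products-of-indecomposable} forces both $M$ and $N$ to be indecomposable. A nonzero semisimple indecomposable module is simple, hence $M$ and $N$ are simple.

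For the converse, assume $M$ and $N$ are simple. Then they are semisimple, so $M \otimes N$ is semisimple by Proposition \ref{prop-tensor-product-of-semisimple}(2); they are also indecomposable, so $M \otimes N$ is indecomposable by Proposition \ref{prop-tensor-products-of-indecomposable}. Being semisimple and indecomposable, $M \otimes N$ is simple, which finishes the argument.

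There is no genuine obstacle here, since both ingredients have already been established over the algebraically closed field $K$; the only point demanding a little care is the bookkeeping around the zero module, and this is automatic, because indecomposable modules (and simple modules) are nonzero by convention, so that ``semisimple and indecomposable'' is equivalent to ``simple'' for each of $M$, $N$ and $M \otimes N$.
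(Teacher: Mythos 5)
Your argument is correct and is exactly the one the paper intends: the corollary is stated there as a direct consequence of Proposition \ref{prop-tensor-products-of-indecomposable} and Proposition \ref{prop-tensor-product-of-semisimple}, with no further details given. You have simply written out the routine verification, using the standard fact that a module is simple if and only if it is nonzero, semisimple and indecomposable.
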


For finite dimensional algebra $\Lambda$, any indecomposable projective $\Lambda$-module is the form  $\Lambda e$ for some primitive  idempotent $e \in \Lambda$.
The following result gives a useful criterion for primitive orthogonal idempotents, indecomposable projective modules and indecomposable injective modules over tensor products.
\begin{proposition}\label{prop-tensor-product-of-primitive-idempotents}
Assume that $\Lambda,\Gamma$ are two finite dimensional algebras over algebraically closed field $K$ and $n,m \geq 1$. Assume that $e_{i}, 1 \leq i \leq n$ and $f_{j}, 1 \leq j \leq m$ are complete set of primitive orthogonal idempotents of $\Lambda,\Gamma$  respectively, then
\begin{enumerate}
\item $e_{i} \otimes f_{j}$, $1 \leq i \leq n$, $1 \leq j \leq m$ is a complete set of primitive orthogonal idempotents of the tensor product algebra $\Lambda \otimes \Gamma$.

\item $\Lambda e_{i} \otimes \Gamma f_{j}$, $1 \leq i \leq n$, $1 \leq j \leq m$ is a complete set of indecomposable projective $(\Lambda \otimes \Gamma)$-modules.

\item $D(e_{i} \Lambda) \otimes D(f_{j} \Gamma)$, $1 \leq i \leq n$, $1 \leq j \leq m$ is a complete set of indecomposable injective $(\Lambda \otimes \Gamma)$-modules.
\end{enumerate}
\end{proposition}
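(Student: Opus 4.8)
The plan is to establish part (1) by a direct computation together with Proposition \ref{prop-tensor-products-of-indecomposable}, and then to deduce (2) and (3) formally. Writing $g_{ij} = e_i \otimes f_j$, the multiplication rule $(a\otimes b)(c\otimes d)=ac\otimes bd$ gives at once $g_{ij}^2 = e_i^2\otimes f_j^2 = g_{ij}$, and $g_{ij}g_{kl}=e_ie_k\otimes f_jf_l = 0$ whenever $(i,j)\neq(k,l)$ because then $e_ie_k=0$ or $f_jf_l=0$; moreover $\sum_{i,j}g_{ij}=(\sum_i e_i)\otimes(\sum_j f_j)=1_\Lambda\otimes 1_\Gamma$, so the $g_{ij}$ form a complete set of orthogonal idempotents. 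For primitivity I would use the standard fact that an idempotent $g$ of a ring $R$ is primitive iff $Rg$ is indecomposable. Since every element of $\Lambda\otimes\Gamma$ is a sum of simple tensors, $(\Lambda\otimes\Gamma)(e_i\otimes f_j)=\Lambda e_i\otimes\Gamma f_j$; as $e_i,f_j$ are primitive, $\Lambda e_i$ and $\Gamma f_j$ are indecomposable, so Proposition \ref{prop-tensor-products-of-indecomposable} (this is where the algebraically closed hypothesis enters) shows $\Lambda e_i\otimes\Gamma f_j$ is an indecomposable $(\Lambda\otimes\Gamma)$-module, hence $g_{ij}$ is primitive.

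For part (2), from (1) I get the decomposition $\Lambda\otimes\Gamma=\bigoplus_{i,j}(\Lambda\otimes\Gamma)(e_i\otimes f_j)=\bigoplus_{i,j}(\Lambda e_i\otimes\Gamma f_j)$ of the regular module into indecomposable projective summands. Since every indecomposable projective $(\Lambda\otimes\Gamma)$-module is up to isomorphism a direct summand of $\Lambda\otimes\Gamma$, the Krull--Schmidt theorem gives that the list $\Lambda e_i\otimes\Gamma f_j$, $1\le i\le n$, $1\le j\le m$, is a complete set of indecomposable projective $(\Lambda\otimes\Gamma)$-modules.

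For part (3), I would pass to opposite algebras via $(\Lambda\otimes\Gamma)^{op}\cong\Lambda^{op}\otimes\Gamma^{op}$. Idempotency is self-opposite, and $e$ is primitive in $\Lambda$ iff it is primitive in $\Lambda^{op}$ (using $e\Lambda e\cong(e\Lambda^{op}e)^{op}$), so $\{e_i\}$ and $\{f_j\}$ are still complete sets of primitive orthogonal idempotents of $\Lambda^{op}$ and $\Gamma^{op}$. Applying (2) to $\Lambda^{op},\Gamma^{op}$, the modules $\Lambda^{op}e_i\otimes\Gamma^{op}f_j=e_i\Lambda\otimes f_j\Gamma$ are a complete set of indecomposable projective right $(\Lambda\otimes\Gamma)$-modules; applying the duality $D=\Hom_K(-,K)$, which induces a bijection between iso classes of indecomposable projective right modules and indecomposable injective left modules, I obtain all indecomposable injectives as $D(e_i\Lambda\otimes f_j\Gamma)$. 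It then remains to identify $D(e_i\Lambda\otimes f_j\Gamma)$ with $D(e_i\Lambda)\otimes D(f_j\Gamma)$ as left $(\Lambda\otimes\Gamma)$-modules: the natural $K$-linear isomorphism $D(X\otimes Y)\cong DX\otimes DY$ for finite-dimensional spaces (a specialization of Proposition \ref{canonical-map}) is $(\Lambda\otimes\Gamma)$-linear, since for $\varphi\in D(e_i\Lambda)$, $\psi\in D(f_j\Gamma)$ one checks $\big((a\otimes b)(\varphi\otimes\psi)\big)(x\otimes y)=(\varphi\otimes\psi)\big((x\otimes y)(a\otimes b)\big)=\varphi(xa)\psi(yb)=\big((a\varphi)\otimes(b\psi)\big)(x\otimes y)$.

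The one genuinely non-formal step is the primitivity claim in (1), which relies on Proposition \ref{prop-tensor-products-of-indecomposable} and hence on $K$ being algebraically closed; everything else is bookkeeping with the tensor-product multiplication and with the exactness and duality properties of $\otimes_K$, and I expect the only mild care needed beyond that is checking in (3) that the canonical space-level isomorphism respects the module structures.
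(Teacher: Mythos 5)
Your proof is correct, and there is nothing in the paper to compare it against: the paper states Proposition \ref{prop-tensor-product-of-primitive-idempotents} with no proof at all, so your argument fills a genuine gap rather than duplicating or diverging from an existing one. The route you take is the natural one given the toolkit the paper has built up to that point: the orthogonal-idempotent computation and the identification $(\Lambda\otimes\Gamma)(e_i\otimes f_j)=\Lambda e_i\otimes\Gamma f_j$ are routine, primitivity is correctly reduced to indecomposability of $\Lambda e_i\otimes\Gamma f_j$ via Proposition \ref{prop-tensor-products-of-indecomposable} (which is where algebraic closedness is genuinely used, through Lawrence's local-algebra results), and parts (2) and (3) follow by Krull--Schmidt and by passing to $(\Lambda\otimes\Gamma)^{op}\cong\Lambda^{op}\otimes\Gamma^{op}$ together with the $K$-duality. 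Importantly, you only invoke results proved \emph{before} this proposition in the paper (Proposition \ref{canonical-map} and Proposition \ref{prop-tensor-products-of-indecomposable}), and not Proposition \ref{prop-tensor-product-of-proj.-inj.}, whose proof in the paper depends on the present statement; so there is no circularity. The explicit check that the canonical isomorphism $D(e_i\Lambda\otimes f_j\Gamma)\cong D(e_i\Lambda)\otimes D(f_j\Gamma)$ respects the left $(\Lambda\otimes\Gamma)$-action is exactly the point most likely to be glossed over, and you handle it correctly.
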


 Observed the form of indecomposable projective and injective modules, now we consider the tensor products of general projective and injective modules over tensor products of algebras.
\begin{proposition}\label{prop-tensor-product-of-proj.-inj.}
Assume that $\Lambda,\Gamma$ are two finite dimensional algebras over algebraically closed field $K$. Let $P_{\Lambda}, I_{\Lambda} \in \mod{\Lambda}$ and $P_{\Gamma}, I_{\Gamma} \in \mod{\Gamma}$. Then
\begin{enumerate}
\item $P_{\Lambda}$, $P_{\Gamma}$ are projective modules if and only if $P_{\Lambda} \otimes P_{\Gamma}$ is a projective $(\Lambda \otimes \Gamma)$-module.

\item $I_{\Lambda}$, $I_{\Gamma}$ are injective modules if and only if  $I_{\Lambda} \otimes I_{\Gamma}$ is a injective $(\Lambda \otimes \Gamma)$-module.
\end{enumerate}
\end{proposition}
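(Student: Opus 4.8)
The plan is to prove (1) by elementary module theory and then deduce (2) from (1) via the standard $K$-duality $D$; throughout we may assume the modules in question are nonzero, since the zero case is immediate.

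For the forward implication in (1), recall that a module over a finite dimensional algebra is projective precisely when it is a direct summand of a free module. Choosing $\Lambda$-module and $\Gamma$-module isomorphisms $P_{\Lambda}\oplus P'_{\Lambda}\cong\Lambda^{a}$ and $P_{\Gamma}\oplus P'_{\Gamma}\cong\Gamma^{b}$ and tensoring over $K$, and using that $\otimes$ commutes with finite direct sums together with $\Lambda^{a}\otimes\Gamma^{b}\cong(\Lambda\otimes\Gamma)^{ab}$ as $(\Lambda\otimes\Gamma)$-modules, we obtain a direct sum decomposition of the free module $(\Lambda\otimes\Gamma)^{ab}$ one of whose summands is $P_{\Lambda}\otimes P_{\Gamma}$; hence $P_{\Lambda}\otimes P_{\Gamma}$ is projective. (Alternatively, decompose $P_{\Lambda}$ and $P_{\Gamma}$ into indecomposable projectives and invoke Proposition \ref{prop-tensor-product-of-primitive-idempotents}(2).) For the converse, the key point is that the algebra map $\Lambda\to\Lambda\otimes\Gamma$, $a\mapsto a\otimes 1$, makes $\Lambda\otimes\Gamma$ into a \emph{free} right $\Lambda$-module of rank $\dim_{K}\Gamma$, so the restriction functor $\mod(\Lambda\otimes\Gamma)\to\mod\Lambda$ sends projectives to projectives: a direct summand of $(\Lambda\otimes\Gamma)^{k}$ restricts to a direct summand of the free $\Lambda$-module $(\Lambda\otimes\Gamma)^{k}|_{\Lambda}$. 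On the other hand, a choice of $K$-basis of $P_{\Gamma}$ identifies $P_{\Lambda}\otimes P_{\Gamma}$, regarded as a left $\Lambda$-module via $a\mapsto a\otimes 1$, with $P_{\Lambda}^{\oplus\dim_{K}P_{\Gamma}}$. Thus if $P_{\Lambda}\otimes P_{\Gamma}$ is projective over $\Lambda\otimes\Gamma$, then $P_{\Lambda}^{\oplus\dim_{K}P_{\Gamma}}$ is projective over $\Lambda$, whence so is its direct summand $P_{\Lambda}$ (here we use $P_{\Gamma}\neq 0$). Exchanging the roles of $\Lambda$ and $\Gamma$, using $b\mapsto 1\otimes b$, shows $P_{\Gamma}$ is projective.

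Part (2) is then obtained from (1) by duality. Using the algebra isomorphism $(\Lambda\otimes\Gamma)^{op}\cong\Lambda^{op}\otimes\Gamma^{op}$, the natural isomorphism $D(M\otimes N)\cong DM\otimes DN$ of $(\Lambda^{op}\otimes\Gamma^{op})$-modules for $M\in\mod\Lambda$ and $N\in\mod\Gamma$ (the elementary fact that $D$ commutes with $\otimes_{K}$), and the fact that an $A$-module is injective if and only if its $D$-dual is a projective $A^{op}$-module, we read off the chain of equivalences: $I_{\Lambda}\otimes I_{\Gamma}$ is injective over $\Lambda\otimes\Gamma$; the module $DI_{\Lambda}\otimes DI_{\Gamma}\cong D(I_{\Lambda}\otimes I_{\Gamma})$ is projective over $\Lambda^{op}\otimes\Gamma^{op}$; by (1) applied to $\Lambda^{op}$ and $\Gamma^{op}$, both $DI_{\Lambda}$ and $DI_{\Gamma}$ are projective over $\Lambda^{op}$ and $\Gamma^{op}$ respectively; and, dualizing once more, $I_{\Lambda}$ is injective over $\Lambda$ and $I_{\Gamma}$ is injective over $\Gamma$. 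The only points needing genuine care are the compatibility of the isomorphism $D(M\otimes N)\cong DM\otimes DN$ with the $(\Lambda^{op}\otimes\Gamma^{op})$-module structures, and the verification that restriction of scalars along $a\mapsto a\otimes 1$ preserves projectivity; the remaining steps are routine.
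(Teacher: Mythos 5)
Your proof is correct, but it takes a genuinely different and in fact more elementary route than the paper. The paper decomposes $P_{\Lambda}$ and $P_{\Gamma}$ into indecomposable summands and invokes its classification of indecomposable projective $(\Lambda\otimes\Gamma)$-modules as the modules $\Lambda e\otimes\Gamma f$ (Propositions \ref{prop-tensor-products-of-indecomposable} and \ref{prop-tensor-product-of-primitive-idempotents}), which is where the hypothesis that $K$ is algebraically closed enters; for the converse it then has to recognize each indecomposable projective $M_i\otimes N_j$ as being of the form $\Lambda e\otimes\Gamma f$ with $M_i\cong\Lambda e$, a Krull--Schmidt-type step the paper passes over quickly. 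Your argument --- direct summands of free modules for the forward direction, and restriction of scalars along $a\mapsto a\otimes 1$ plus the identification $(P_{\Lambda}\otimes P_{\Gamma})|_{\Lambda}\cong P_{\Lambda}^{\oplus\dim_K P_{\Gamma}}$ for the converse --- avoids both the indecomposable classification and the algebraic closedness assumption entirely, so it proves a slightly more general statement; you also supply an explicit duality argument for part (2), which the paper dismisses as ``similar.'' One small slip: for restriction of \emph{left} modules along $\Lambda\to\Lambda\otimes\Gamma$ to preserve projectivity you want $\Lambda\otimes\Gamma$ to be free (or projective) as a \emph{left} $\Lambda$-module, not a right one; since $\Lambda\otimes\Gamma\cong\Lambda^{\oplus\dim_K\Gamma}$ on both sides this is harmless, but the wording should be corrected.
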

\begin{proof}
We only prove (1), the (2) is similar.
For any modules $P_{\Lambda} \in \mod{\Lambda}$ and $P_{\Gamma} \in \mod{\Gamma}$, we decompose
$P_{\Lambda} = \bigoplus\limits_{i =1}^{s} M_{i},~ P_{\Gamma} = \bigoplus\limits_{j =1}^{t} N_{j}$
where $M_{i} \in \mod{\Lambda}$, $N_{j} \in \mod{\Gamma}$ are indecomposable. So
\begin{align}\label{eq-prop-tensor-product-of-proj.}
P_{\Lambda} \otimes P_{\Gamma} = (\bigoplus\limits_{i =1}^{s} M_{i}) \otimes (\bigoplus\limits_{j =1}^{t} N_{j}) \cong \bigoplus\limits_{i =1}^{s} \bigoplus\limits_{j =1}^{t} M_{i} \otimes N_{j}
\end{align}

"$\Longrightarrow$" Suppose $P_{\Lambda}$, $P_{\Gamma}$ are projective modules, then $M_{i} $, $N_{j}$ are indecomposable projective modules. By Proposition \ref{prop-tensor-product-of-primitive-idempotents}(2), $M_{i} \otimes N_{j}$, $1 \leq i \leq s$, $1 \leq j \leq t$ are indecomposable projective $(\Lambda \otimes \Gamma)$-modules, so $P_{\Lambda} \otimes P_{\Gamma}$ is projective by (\ref{eq-prop-tensor-product-of-proj.}).

"$\Longleftarrow$" Assume $P_{\Lambda} \otimes P_{\Gamma}$ is  projective. By (\ref{eq-prop-tensor-product-of-proj.}) and Proposition \ref{prop-tensor-products-of-indecomposable}, $M_{i} \otimes N_{j}$, $1 \leq i \leq s$, $1 \leq j \leq t$ are indecomposable projective $(\Lambda \otimes \Gamma)$-modules. Noted the form of the indecomposable projective $(\Lambda \otimes \Gamma)$-modules,  by Proposition \ref{prop-tensor-product-of-primitive-idempotents}(2), $M_{i} $, $N_{j}$ are indecomposable projective, therefore $P_{\Lambda}$, $P_{\Gamma}$ are projective.
\end{proof}

We have the following result as a corollary of Corollary \ref{coro-tensor-product-of-simple} and Proposition \ref{prop-tensor-product-of-proj.-inj.}.
\begin{corollary}\label{coro-simple-proj.-and-inj.-over-tensor-products}
Assume that $\Lambda,\Gamma$ are two finite dimensional algebras over algebraically closed field $K$. Let $P_{\Lambda}, I_{\Lambda} \in \mod{\Lambda}$ and $P_{\Gamma}, I_{\Gamma} \in \mod{\Gamma}$. Then
\begin{enumerate}
\item $P_{\Lambda} \otimes P_{\Gamma}$ is a simple projective $(\Lambda \otimes \Gamma)$-module if and only if $P_{\Lambda}$, $P_{\Gamma}$ are simple projective $\Lambda$-respectively, $\Gamma$-modules.

\item $I_{\Lambda} \otimes I_{\Gamma}$ is a simple injective $(\Lambda \otimes \Gamma)$-module if and only if $I_{\Lambda}$, $I_{\Gamma}$ are simple injective $\Lambda$-respectively, $\Gamma$-modules.
\end{enumerate}
\end{corollary}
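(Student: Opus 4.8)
The plan is to deduce this corollary directly from the two results it is advertised to follow from, namely Corollary \ref{coro-tensor-product-of-simple} (simplicity of a tensor product of modules detects simplicity of each factor) and Proposition \ref{prop-tensor-product-of-proj.-inj.} (the analogous biconditionals for projectivity and for injectivity). No new mechanism is needed; the work is purely to combine these two statements, and the standing hypothesis that $K$ is algebraically closed is exactly what is required to invoke both of them.

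For part (1), I would argue as follows. For the implication ``$P_{\Lambda} \otimes P_{\Gamma}$ simple projective $\Rightarrow P_{\Lambda}, P_{\Gamma}$ simple projective'': since $P_{\Lambda} \otimes P_{\Gamma}$ is in particular simple, Corollary \ref{coro-tensor-product-of-simple} gives that $P_{\Lambda}$ and $P_{\Gamma}$ are simple $\Lambda$- respectively $\Gamma$-modules; since $P_{\Lambda} \otimes P_{\Gamma}$ is in particular projective, Proposition \ref{prop-tensor-product-of-proj.-inj.}(1) gives that $P_{\Lambda}$ and $P_{\Gamma}$ are projective; hence both are simple projective. For the converse, if $P_{\Lambda}$ and $P_{\Gamma}$ are simple projective, then Corollary \ref{coro-tensor-product-of-simple} makes $P_{\Lambda} \otimes P_{\Gamma}$ simple and Proposition \ref{prop-tensor-product-of-proj.-inj.}(1) makes it projective, so it is simple projective. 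Part (2) is handled by the verbatim same argument, with Proposition \ref{prop-tensor-product-of-proj.-inj.}(1) replaced by Proposition \ref{prop-tensor-product-of-proj.-inj.}(2) and ``projective'' replaced by ``injective'' throughout, still using Corollary \ref{coro-tensor-product-of-simple} for the simplicity half.

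The only point worth keeping in mind is the implicit nonvanishing convention: a simple module is by definition nonzero, so in the hypothesis of the forward direction there is no degenerate case ($P_{\Lambda} = 0$ or $P_{\Gamma} = 0$) to exclude, as this is already encoded in Corollary \ref{coro-tensor-product-of-simple}. Accordingly I do not anticipate any genuine obstacle here; the proof is a short bookkeeping argument chaining the two prior biconditionals, and it can be written in a few lines.
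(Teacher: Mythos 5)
Your proposal is correct and matches the paper exactly: the paper states this corollary without proof, presenting it as an immediate consequence of Corollary \ref{coro-tensor-product-of-simple} and Proposition \ref{prop-tensor-product-of-proj.-inj.}, which is precisely the combination you spell out. Nothing further is needed.
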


We next consider the tensor products of projective cover, the following result shows that tensor products preserves projective covers of modules.
\begin{proposition}\label{prop-tensor-product-of-proj.-cover}
Assume that $\Lambda,\Gamma$ are two finite dimensional algebras over algebraically closed field $K$.
\begin{enumerate}
\item If $P_{M}$, $P_{N}$ are the projective cover of modules $M \in \mod{\Lambda}$, $N \in \mod{\Gamma}$ respectively, then $P_{M} \otimes P_{N}$ is the projective cover of $(\Lambda \otimes \Gamma)$-module $M \otimes N$.

\item If $I_{M}$, $I_{N}$ are the injective envelope of modules $M \in \mod{\Lambda}$, $N \in \mod{\Gamma}$ respectively, then $I_{M} \otimes I_{N}$ is the injective envelope of $(\Lambda \otimes \Gamma)$-module $M \otimes N$.
\end{enumerate}
\end{proposition}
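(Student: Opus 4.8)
The plan is to prove (1) directly from the characterisation of a projective cover as a surjection $p\colon P\to M$ with $P$ projective and $\Ker p\subseteq\rad P$ (a superfluous epimorphism), and then to deduce (2) by $K$-duality.

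First I would fix projective covers $p\colon P_M\to M$ and $q\colon P_N\to N$, so that $\Ker p\subseteq\rad P_M$ and $\Ker q\subseteq\rad P_N$. By Proposition \ref{prop-tensor-product-of-proj.-inj.}(1), the module $P_M\otimes P_N$ is projective, and since tensoring over a field is exact, $p\otimes q\colon P_M\otimes P_N\to M\otimes N$ is surjective. The key step is to identify its kernel: applying Proposition \ref{prop-Tensor-product-of-exact-sequence} to the short exact sequences $0\to\Ker p\to P_M\xrightarrow{p}M\to0$ and $0\to\Ker q\to P_N\xrightarrow{q}N\to0$ yields
$$\Ker(p\otimes q)=\Ker p\otimes P_N+P_M\otimes\Ker q$$
as a submodule of $P_M\otimes P_N$.

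Then I would invoke Proposition \ref{prop-tensor-product-of-semisimple}(1), which gives $\rad(P_M\otimes P_N)=\rad P_M\otimes P_N+P_M\otimes\rad P_N$. Since $\Ker p\otimes P_N\subseteq\rad P_M\otimes P_N$ and $P_M\otimes\Ker q\subseteq P_M\otimes\rad P_N$, it follows that $\Ker(p\otimes q)\subseteq\rad(P_M\otimes P_N)$; hence $p\otimes q$ is a superfluous epimorphism from a projective module, i.e. a projective cover of $M\otimes N$. For (2) I would pass to opposite algebras: using $(\Lambda\otimes\Gamma)^{op}\cong\Lambda^{op}\otimes\Gamma^{op}$ and the natural isomorphism $D(I_M\otimes I_N)\cong D(I_M)\otimes D(I_N)$ (with the module structures matched via the above isomorphism of algebras), the fact that $D$ exchanges injective envelopes with projective covers reduces (2) to (1) applied to $\Lambda^{op},\Gamma^{op}$ and the modules $DM, DN$, since $D(I_M)$ and $D(I_N)$ are then the projective covers of $DM$ and $DN$.

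I do not expect a genuine obstacle here; the only point requiring care is the computation of $\Ker(p\otimes q)$, where one must check that the canonical maps $\Ker p\otimes P_N\to P_M\otimes P_N$ and $P_M\otimes\Ker q\to P_M\otimes P_N$ are injective (immediate from the exactness of $-\otimes_K-$), so that the sum appearing in Proposition \ref{prop-Tensor-product-of-exact-sequence} may be read literally as a submodule of $P_M\otimes P_N$.
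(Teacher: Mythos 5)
Your proof is correct, and it reaches the conclusion by a slightly different route than the paper. Both arguments rest on the same two ingredients — Proposition \ref{prop-Tensor-product-of-exact-sequence} to identify the kernel of a tensor product of surjections, and Proposition \ref{prop-tensor-product-of-semisimple}(1) to identify $\rad(P_M\otimes P_N)$ — but you combine them via the ``superfluous kernel'' criterion: you apply Proposition \ref{prop-Tensor-product-of-exact-sequence} to the covers $p\colon P_M\to M$ and $q\colon P_N\to N$ themselves, obtain $\Ker(p\otimes q)=\Ker p\otimes P_N+P_M\otimes\Ker q\subseteq\rad(P_M\otimes P_N)$, and conclude directly. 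The paper instead applies Proposition \ref{prop-Tensor-product-of-exact-sequence} to the sequences ending in the tops $M/\rad M$ and $N/\rad N$, shows that $(P_M\otimes P_N)/\rad(P_M\otimes P_N)$ and $(M\otimes N)/\rad(M\otimes N)$ are both isomorphic to $(M/\rad M)\otimes(N/\rad N)$, and invokes the fact that a projective module with the right top is the projective cover. Your version has the small advantage of exhibiting the covering map explicitly as $p\otimes q$, whereas the paper's version only matches tops up to isomorphism; your closing remark about reading the sum in Proposition \ref{prop-Tensor-product-of-exact-sequence} literally as a submodule (justified by exactness of $-\otimes_K-$) is exactly the point that needs care, and you handle it. The reduction of (2) to (1) via $D$ and $(\Lambda\otimes\Gamma)^{op}\cong\Lambda^{op}\otimes\Gamma^{op}$ matches the paper's ``dual'' claim and is fine.
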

\begin{proof}
We only prove (1), the proof of (2) is dual to (1).
For any $\Lambda$-module $M \in \mod{\Lambda}$ and $\Gamma$-module $N \in \mod{\Gamma}$, suppose $P_{M}$, $P_{N}$ are the projective cover of modules $M$, $N$ respectively,
then $P_{M}$ and $P_{N}$ are projective, hence by Proposition \ref{prop-tensor-product-of-proj.-inj.}(1), $P_{M} \otimes P_{N}$ is projective.
Because $P_{M}$ is the projective cover of $\Lambda$-module $M$, so $M/\rad{M} \cong P_{M}/\rad{P_{M}}$. Similarly, $N/\rad{N} \cong P_{N}/\rad{P_{N}}$.
Observed two exact sequences
$0 \xrightarrow{} \rad{P_{M}} \xrightarrow{} P_{M} \xrightarrow{g_{1}} M/\rad{M} \xrightarrow{} 0$ and
$0 \xrightarrow{} \rad{P_{N}} \xrightarrow{} P_{N} \xrightarrow{g_{2}} N/\rad{N} \xrightarrow{} 0$,
by Proposition \ref{prop-Tensor-product-of-exact-sequence}, the sequence
$$0 \xrightarrow{} \rad{P_{M}} \otimes P_{N} +  P_{M} \otimes \rad{P_{N}} \xrightarrow{} P_{M} \otimes P_{N} \xrightarrow{g_{1} \otimes g_{2}} (M/\rad{M}) \otimes (N/\rad{N})\xrightarrow{} 0$$
is exact.
By Proposition \ref{prop-tensor-product-of-semisimple}(1), we have modules isomorphism $(P_{M} \otimes P_{N})/(\rad(P_{M} \otimes P_{N})) \cong (M/\rad{M}) \otimes (N/\rad{N})$.
If modules isomorphism
$$(M\otimes N)/\rad(M\otimes N) \cong (M/\rad{M}) \otimes (N/\rad{N}),$$
 then it is clear from \cite[Chapter \uppercase\expandafter{\romannumeral1}. Corollary 5.9] {ASS} that $P_{M} \otimes P_{N}$ is the projective cover of $(\Lambda \otimes \Gamma)$-module $M \otimes N$.
It is suffices to prove modules isomorphism $(M\otimes N)/\rad(M\otimes N) \cong (M/\rad{M}) \otimes (N/\rad{N})$.
Note two exact sequences
$0 \xrightarrow{} \rad{M} \xrightarrow{} M \xrightarrow{} M/\rad{M} \xrightarrow{} 0$ and
$0 \xrightarrow{} \rad{N} \xrightarrow{} N \xrightarrow{} N/\rad{N} \xrightarrow{} 0$,
by Proposition \ref{prop-Tensor-product-of-exact-sequence}, the sequence
$$0 \xrightarrow{} \rad{M} \otimes N +  M \otimes \rad{N} \xrightarrow{} M \otimes N \xrightarrow{} M/\rad{M} \otimes N/\rad{N} \xrightarrow{} 0$$
is exact.
By Proposition \ref{prop-tensor-product-of-semisimple}(1), we have $(M\otimes N)/\rad(M\otimes N) \cong (M/\rad{M}) \otimes (N/\rad{N})$. The assertion follows.
\end{proof}

\subsection{Tensor products and complexes}
For complexes over algebras, the tensor product over fields can be used to construct tensor product of complexes \cite{CE,M67,R09}.

 \begin{definition}
 Assume that $\Lambda,\Gamma$ are two finite dimensional algebras over field $K$. Let objects $A_{\bullet} \in \C(\mod\Lambda)$ and~$B_{\bullet} \in \C(\mod\Gamma)$.
  \textbf{Tensor product of complexes} $A_{\bullet} \otimes^{T} B_{\bullet}$ of $A_{\bullet}$ and $B_{\bullet}$
   over $K$ is defined as the complex
$((A_{\bullet} \otimes^{T} B_{\bullet})_{p},d^{A_{\bullet} \otimes^{T} B_{\bullet}}_{p}) \in \mathcal{C}(\mod(\Lambda \otimes \Gamma))$ where
  $(A_{\bullet} \otimes^{T} B_{\bullet})_{p} =\bigoplus\limits_{j \in \mathbb{Z}}~A_{j} \otimes B_{p-j}$
  and the differential $d^{A_{\bullet} \otimes^{T} B_{\bullet}}_{p}$ given by
  \begin{gather}
d^{A_{\bullet} \otimes^{T} B_{\bullet}}_{p}(v\otimes w) =d^{A}_{j}(v)\otimes w  + (-1)^j v\otimes d^{B}_{p-j}(w),~\forall v\otimes w \in  A_{j} \otimes^{T} B_{p-j}. \notag
\end{gather}
for every $p \in \mathbb{Z}$.
\end{definition}

We need the K{\"u}nneth formula \cite{CE,M67,R09} which is a vital tool to compute homological group for tensor products of complexes. Since modules over fields is flat, we obtain the following K{\"u}nneth formula over a field.

\begin{lemma}\label{lem-homological-functorial-isomorphism}
Assume that $\Lambda,\Gamma$ are two finite dimensional algebras over field $K$. If $A_{\bullet} \in \C(\mod\Lambda)$ and~$B_{\bullet} \in \C(\mod\Gamma)$, then for every integer $p \in \mathbb{Z}$, there is a homological functorial isomorphism
      $${\rm H}_{p}(A_{\bullet} \otimes^{T} B_{\bullet}) \cong \bigoplus_{i+j=p} {\rm H}_{i}(A_{\bullet}) \otimes {\rm H}_{j}(B_{\bullet}).$$
\end{lemma}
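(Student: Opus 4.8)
The plan is to reduce the statement to the classical K{\"u}nneth theorem for tensor products of chain complexes and then exploit that $K$ is a field, so that all relevant flatness hypotheses hold automatically and all $\mathrm{Tor}$-terms vanish.

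First I would forget the $(\Lambda\otimes\Gamma)$-action and regard $A_{\bullet}\otimes^{T}B_{\bullet}$ merely as a complex of $K$-vector spaces; in this guise it is exactly the usual tensor product over $K$ of the underlying complexes of $K$-modules of $A_{\bullet}$ and $B_{\bullet}$ (the sign convention fixed in the Definition above only changes the differentials by an isomorphism of complexes, hence is irrelevant for homology). Since $K$ is a field, every $K$-module is free, so in particular each term $A_{j}$ is a flat $K$-module. Therefore the classical K{\"u}nneth short exact sequence (see \cite{CE,M67,R09}) applies and gives, for every $p\in\mathbb{Z}$, an exact sequence
$$0 \lrw \bigoplus_{i+j=p} {\rm H}_{i}(A_{\bullet})\otimes {\rm H}_{j}(B_{\bullet}) \lrw {\rm H}_{p}(A_{\bullet}\otimes^{T}B_{\bullet}) \lrw \bigoplus_{i+j=p-1} \mathrm{Tor}^{K}_{1}\!\big({\rm H}_{i}(A_{\bullet}),{\rm H}_{j}(B_{\bullet})\big) \lrw 0.$$
As $K$ is a field we have $\mathrm{Tor}^{K}_{1}(-,-)=0$, so the rightmost term vanishes and the first arrow becomes an isomorphism
$$\bigoplus_{i+j=p} {\rm H}_{i}(A_{\bullet})\otimes {\rm H}_{j}(B_{\bullet}) \ \cong\ {\rm H}_{p}(A_{\bullet}\otimes^{T}B_{\bullet})$$
of $K$-vector spaces.

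It then remains to upgrade this to a \emph{functorial} isomorphism of $(\Lambda\otimes\Gamma)$-modules. Functoriality in $A_{\bullet}$ and $B_{\bullet}$ is part of the classical statement, since the K{\"u}nneth map is induced by the external product of cycles and is natural with respect to chain maps. Applying this naturality to the chain maps given by left multiplication by $a\in\Lambda$ on $A_{\bullet}$ and by $b\in\Gamma$ on $B_{\bullet}$ --- each of which commutes with the respective differential and hence acts on ${\rm H}_{i}(A_{\bullet})$, resp. ${\rm H}_{j}(B_{\bullet})$ --- shows that the displayed map intertwines the two actions of $a\otimes b$; since such elements span $\Lambda\otimes\Gamma$, it is $(\Lambda\otimes\Gamma)$-linear, as claimed.

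The argument is essentially bookkeeping: the only substantive input is the classical K{\"u}nneth theorem, whose hypothesis (flatness of one complex) is automatic over a field, and the only point requiring a little care is the last step, where one must check that the external homology product defining the K{\"u}nneth map is computed from the same tensor product of complexes, with the same sign convention, as in our Definition, so that its naturality can be invoked verbatim; as any two sign conventions differ by an isomorphism of complexes, this causes no difficulty.
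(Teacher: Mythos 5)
Your proposal is correct and is essentially the argument the paper has in mind: the paper gives no written proof, simply invoking the classical K\"unneth theorem from \cite{CE,M67,R09} together with the observation that every module over a field is flat, which is exactly your reduction (the $\mathrm{Tor}$-term vanishes, so the K\"unneth sequence collapses to an isomorphism). Your additional care about the sign convention and about upgrading the $K$-linear isomorphism to a natural $(\Lambda\otimes\Gamma)$-linear one via naturality with respect to the multiplication chain maps is a worthwhile refinement that the paper leaves implicit.
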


Because tensor products over field preserve projective resolution \cite[\uppercase\expandafter{\romannumeral9}.Corollary 2.7] {CE}, by
Lemma \ref{lem-homological-functorial-isomorphism}, we have the following result.
\begin{lemma}\label{lem-Ext}(see \cite{CC17,P-2})
Assume that $\Lambda,\Gamma$ are two finite dimensional algebras over field $K$. If $M,N \in \mod\Lambda$ and~$M^{'},N^{'} \in \mod\Gamma$, then there is a functorial isomorphism
$$\Ext^{p}_{\Lambda \otimes \Gamma}(M \otimes M^{'}, N \otimes N^{'}) \cong \bigoplus_{i+j=p}\Ext^{i}_{\Lambda}(M, N) \otimes \Ext^{j}_{\Gamma}(M^{'}, N^{'})$$
for every integer $p \geq 0$.
\end{lemma}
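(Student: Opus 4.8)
The plan is to realise both sides as the (co)homology of explicit complexes built from projective resolutions, and then to invoke the K{\"u}nneth formula of Lemma~\ref{lem-homological-functorial-isomorphism}. First I would pick a projective resolution $P_\bullet \xrightarrow{} M$ in $\mod\Lambda$ and a projective resolution $Q_\bullet \xrightarrow{} M'$ in $\mod\Gamma$ consisting of finitely generated projective modules; this is possible because $\Lambda$ and $\Gamma$ are finite dimensional and $M, M'$ are finitely generated. Since tensor products over a field preserve projective resolutions (see \cite[\uppercase\expandafter{\romannumeral9}.Corollary 2.7]{CE}), the tensor product complex $P_\bullet \otimes^{T} Q_\bullet$ is a projective resolution of $M \otimes M'$ over $\Lambda \otimes \Gamma$. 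Hence $\Ext^{p}_{\Lambda \otimes \Gamma}(M \otimes M', N \otimes N')$ is the $p$th cohomology of the complex $\Hom_{\Lambda \otimes \Gamma}(P_\bullet \otimes^{T} Q_\bullet, N \otimes N')$, while $\Ext^{i}_{\Lambda}(M,N)$ and $\Ext^{j}_{\Gamma}(M',N')$ are the cohomologies of $\Hom_{\Lambda}(P_\bullet, N)$ and $\Hom_{\Gamma}(Q_\bullet, N')$ respectively.

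Next I would identify $\Hom_{\Lambda \otimes \Gamma}(P_\bullet \otimes^{T} Q_\bullet, N \otimes N')$, viewed as a complex of $K$-vector spaces, with the tensor product complex $\Hom_{\Lambda}(P_\bullet, N) \otimes^{T} \Hom_{\Gamma}(Q_\bullet, N')$, each $\Hom$-complex being regarded as an object of $\C(\mod K)$ after reindexing as a chain complex. Term by term, $(P_\bullet \otimes^{T} Q_\bullet)_{p} = \bigoplus_{i+j=p} P_{i} \otimes Q_{j}$ is a finite direct sum with all $P_{i}, Q_{j}$ finitely generated, so Proposition~\ref{canonical-map} supplies natural isomorphisms
$$\Hom_{\Lambda \otimes \Gamma}\!\big((P_\bullet \otimes^{T} Q_\bullet)_{p},\, N \otimes N'\big) \;\cong\; \bigoplus_{i+j=p} \Hom_{\Lambda}(P_{i}, N) \otimes \Hom_{\Gamma}(Q_{j}, N').$$
The remaining point is to check that these term-wise isomorphisms are compatible with the differentials; this follows from the naturality in Proposition~\ref{canonical-map} together with the Leibniz-type formula defining $d^{P_\bullet \otimes^{T} Q_\bullet}$, and the sign $(-1)^{i}$ built into that differential matches precisely the sign appearing in the differential of a tensor product of complexes (any residual global sign discrepancy is harmless, since it does not affect cohomology).

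Finally, applying Lemma~\ref{lem-homological-functorial-isomorphism} with $K$ in place of both $\Lambda$ and $\Gamma$ to $\Hom_{\Lambda}(P_\bullet, N) \otimes^{T} \Hom_{\Gamma}(Q_\bullet, N')$ yields
$${\rm H}_{p}\!\big(\Hom_{\Lambda}(P_\bullet, N) \otimes^{T} \Hom_{\Gamma}(Q_\bullet, N')\big) \;\cong\; \bigoplus_{i+j=p} {\rm H}_{i}\big(\Hom_{\Lambda}(P_\bullet, N)\big) \otimes {\rm H}_{j}\big(\Hom_{\Gamma}(Q_\bullet, N')\big),$$
and the right-hand side is exactly $\bigoplus_{i+j=p}\Ext^{i}_{\Lambda}(M, N) \otimes \Ext^{j}_{\Gamma}(M', N')$; functoriality in all four variables is inherited from that of the chosen resolutions, of Proposition~\ref{canonical-map}, and of the K{\"u}nneth isomorphism. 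I expect the main obstacle to be the middle step, namely verifying carefully that the canonical term-wise isomorphisms assemble into an isomorphism of complexes — this is where the sign conventions and the direct-sum bookkeeping must be handled with care — while the rest is a formal consequence of results already available in the excerpt.
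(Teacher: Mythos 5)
Your proposal is correct and follows essentially the same route as the paper, which justifies this lemma in one line by citing that tensor products over a field preserve projective resolutions (\cite[IX, Corollary 2.7]{CE}) and then invoking the K\"unneth formula of Lemma \ref{lem-homological-functorial-isomorphism}. You have simply made explicit the intermediate identification of $\Hom_{\Lambda\otimes\Gamma}(P_\bullet\otimes^{T}Q_\bullet, N\otimes N')$ with the tensor product of the two $\Hom$-complexes via Proposition \ref{canonical-map}, which the paper leaves implicit.
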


The Lemma \ref{lem-homological-functorial-isomorphism} and Lemma \ref{lem-Ext} are useful formula and  play an important role in studying tensor product of modules and complexes.

In order to prove main result, we need the following result.
\begin{lemma}\label{lem-Ext-condition}
Assume that $\Lambda,\Gamma$ are two finite dimensional algebras over field $K$ and $n,m \geq 1$. Let objects $M \in \mod{\Lambda}$ and $N \in \mod{\Gamma}$.
\begin{enumerate}
\item If $\Ext^{i}_{\Lambda}(D \Lambda, M) = 0$ for any $0 \leq i < n$ and $\Ext^{j}_{\Gamma}(D \Gamma, N) = 0$ for any $0 \leq j < m$.
Then
$\Ext^{q}_{\Lambda \otimes\Gamma}(D (\Lambda \otimes\Gamma), M \otimes N) = 0$ for any $0 \leq q < n+m$.

\item If $\Ext^{i}_{\Lambda}(M, \Lambda) = 0$ for any $0 \leq i < n$ and $\Ext^{j}_{\Gamma}(N, \Gamma) = 0$ for any $0 \leq j < m$.
Then
$\Ext^{q}_{\Lambda \otimes\Gamma}(M \otimes N, \Lambda \otimes\Gamma) = 0$ for any $0 \leq q < n+m$.
\end{enumerate}
\end{lemma}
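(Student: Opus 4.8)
The plan is to deduce both statements directly from the K\"unneth-type formula for $\Ext$ recorded in Lemma \ref{lem-Ext}, together with an elementary pigeonhole argument on homological degree. Parts (1) and (2) have completely parallel proofs, so I would write out (1) and then indicate the minor changes needed for (2).

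For (1), the first step is to identify $D(\Lambda\otimes\Gamma)$ with $D\Lambda\otimes D\Gamma$. Since $\Lambda$ and $\Gamma$ are finite dimensional over $K$, the canonical map $\Hom_K(\Lambda,K)\otimes\Hom_K(\Gamma,K)\to\Hom_K(\Lambda\otimes\Gamma,K)$ is an isomorphism of $(\Lambda\otimes\Gamma)$-modules (this is an instance of Proposition \ref{canonical-map}; see also \cite{CE}), i.e.\ $D(\Lambda\otimes\Gamma)\cong D\Lambda\otimes D\Gamma$. Then Lemma \ref{lem-Ext} gives, for each $q$ with $0\leq q<n+m$,
$$\Ext^{q}_{\Lambda\otimes\Gamma}\bigl(D(\Lambda\otimes\Gamma),\,M\otimes N\bigr)\cong\bigoplus_{i+j=q}\Ext^{i}_{\Lambda}(D\Lambda,M)\otimes\Ext^{j}_{\Gamma}(D\Gamma,N).$$
Now fix a summand indexed by a pair $(i,j)$ with $i+j=q$. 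If that summand is nonzero, then both tensor factors are nonzero; but $\Ext^{i}_{\Lambda}(D\Lambda,M)\neq0$ forces $i\geq n$ by hypothesis, while $\Ext^{j}_{\Gamma}(D\Gamma,N)\neq0$ forces $j\geq m$, so $q=i+j\geq n+m$, contradicting $q<n+m$. Hence every summand vanishes, and therefore $\Ext^{q}_{\Lambda\otimes\Gamma}(D(\Lambda\otimes\Gamma),M\otimes N)=0$ for all $0\leq q<n+m$.

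For (2) I would argue in exactly the same way, using that $\Lambda\otimes\Gamma$, as a module over itself, is the tensor product of the regular modules $\Lambda$ and $\Gamma$, so that Lemma \ref{lem-Ext} yields $\Ext^{q}_{\Lambda\otimes\Gamma}(M\otimes N,\Lambda\otimes\Gamma)\cong\bigoplus_{i+j=q}\Ext^{i}_{\Lambda}(M,\Lambda)\otimes\Ext^{j}_{\Gamma}(N,\Gamma)$, and the identical pigeonhole argument annihilates every summand when $q<n+m$. There is no serious obstacle in this proof: the only points that call for a word of care are the module identification $D(\Lambda\otimes\Gamma)\cong D\Lambda\otimes D\Gamma$ over the tensor algebra, and the observation that the degree-zero index $(i,j)=(0,0)$ is already covered since the hypotheses include $i=0$ and $j=0$; granting these, the conclusion is forced by the naturality of the isomorphism in Lemma \ref{lem-Ext}.
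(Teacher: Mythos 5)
Your proof is correct and follows essentially the same route as the paper: both apply the K\"unneth-type isomorphism of Lemma \ref{lem-Ext} to $D(\Lambda\otimes\Gamma)\cong D\Lambda\otimes D\Gamma$ and then kill every summand $\Ext^{i}_{\Lambda}(D\Lambda,M)\otimes\Ext^{j}_{\Gamma}(D\Gamma,N)$ with $i+j=q<n+m$ by a degree count. The paper organizes that count as a case split on $q$ and $i$ rather than your contrapositive pigeonhole phrasing, but the argument is the same.
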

\begin{proof}
We only prove (1), the proof of (2) is dual to (1).
Suppose $\Ext^{i}_{\Lambda}(D \Lambda, M) = 0$ for any $0 \leq i < n$ and $\Ext^{j}_{\Gamma}(D \Gamma, N) = 0$ for any $0 \leq j < m$.
Firstly, for any integer $0 \leq q < n+m$, we prove that $\Ext^{i}_{\Lambda}(D\Lambda, M) \otimes \Ext^{j}_{\Gamma}(D\Gamma, N) =0$ for $0 \leq i,j \leq q$ with $i+j=q$.

When $0 \leq q \leq n-1$, then $0 \leq i \leq n-1$, so $\Ext^{i}_{\Lambda}(D\Lambda, M)=0$, hence $\Ext^{i}_{\Lambda}(D\Lambda, M) \otimes \Ext^{j}_{\Gamma}(D\Gamma, N) =0$ for $0 \leq i,j < n+m$ with $i+j=q$.

When $n \leq q \leq n+m-1$, it is suffices to show $\Ext^{i}_{\Lambda}(D\Lambda, M) \otimes \Ext^{j}_{\Gamma}(D\Gamma, N) =0$ for $n \leq i \leq q$.
In  the case of $n \leq i \leq q$, noticed that $0 \leq j= q-i \leq m-1$, so $\Ext^{j}_{\Gamma}(D\Gamma, N) =0$, the assertion follows.

By Lemma \ref{lem-Ext}, we obtain
\begin{align*}
\Ext^{q}_{\Lambda \otimes\Gamma}(D (\Lambda \otimes\Gamma), M \otimes N)
                         &= \Ext^{q}_{\Lambda \otimes\Gamma}( D\Lambda \otimes D\Gamma, M \otimes N)   \\
                         &=\bigoplus_{i+j=q}\Ext^{i}_{\Lambda}(D\Lambda, M) \otimes \Ext^{j}_{\Gamma}(D\Gamma, N) \\
                       &=0
\end{align*}
for any $0 \leq q < n+m$. Then the result follows.
\end{proof}

Similar to that tensor products over field preserve projective resolution \cite[\uppercase\expandafter{\romannumeral9}.Corollary 2.7] {CE},
the following statement implies that tensor products over field preserves  finite dimensions of injective, projective module.
\begin{lemma}\label{lem-proj.-and-inj.-dimension-over-tensor-product}
Assume that $\Lambda,\Gamma$ are two finite dimensional algebras over field $K$ and $n,m \geq 1$. Let objects $M \in \mod{\Lambda}$ and $N \in \mod{\Gamma}$.
\begin{enumerate}
\item If $\id_{\Lambda} M = n$ and $\id_{\Gamma} N = m$,
then
$\id_{\Lambda \otimes\Gamma} (M \otimes N) = n +m$.

\item If $\pd_{\Lambda} M = n$ and $\pd_{\Gamma} N = m$,
then
$\pd_{\Lambda \otimes\Gamma} (M \otimes N) = n +m$.
\end{enumerate}
\end{lemma}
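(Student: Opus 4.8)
The plan is to treat part (2) first, proving the two inequalities $\pd_{\Lambda\otimes\Gamma}(M\otimes N)\leq n+m$ and $\pd_{\Lambda\otimes\Gamma}(M\otimes N)\geq n+m$ separately, and then to deduce part (1) from it by the duality $D$. For the upper bound I would fix finitely generated projective resolutions $0\to P_n\to\cdots\to P_0\to M\to 0$ over $\Lambda$ and $0\to Q_m\to\cdots\to Q_0\to N\to 0$ over $\Gamma$. Each $P_i\otimes Q_j$ is a projective $(\Lambda\otimes\Gamma)$-module, being a direct summand of a free one (a tensor product of free modules $\Lambda^a\otimes\Gamma^b\cong(\Lambda\otimes\Gamma)^{ab}$ is free; no hypothesis on $K$ is needed), so the total complex $P_\bullet\otimes^{T}Q_\bullet$ is a bounded complex of projective $(\Lambda\otimes\Gamma)$-modules concentrated in homological degrees $0,\dots,n+m$. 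By the K\"unneth formula (Lemma \ref{lem-homological-functorial-isomorphism}) its homology in degree $p$ is $\bigoplus_{i+j=p}\mathrm{H}_i(P_\bullet)\otimes\mathrm{H}_j(Q_\bullet)$, which equals $M\otimes N$ for $p=0$ and vanishes for $p>0$; hence $P_\bullet\otimes^{T}Q_\bullet$ is a projective resolution of $M\otimes N$ of length at most $n+m$, giving $\pd_{\Lambda\otimes\Gamma}(M\otimes N)\leq n+m$. This is just the length-aware form of the principle, already used above for Lemma \ref{lem-Ext}, that $\otimes^{T}$ preserves projective resolutions, cf. \cite[IX.Corollary 2.7]{CE}.

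For the lower bound I would use that $\pd_\Lambda M=n$ forces $\Ext^i_\Lambda(M,-)=0$ for $i>n$ and that there is a simple $\Lambda$-module $S$ with $\Ext^n_\Lambda(M,S)\neq 0$: otherwise $\Ext^n_\Lambda(M,X)=0$ for all $X\in\mod\Lambda$ by induction on the length of $X$ through the long exact sequence, whence $\Ext^1_\Lambda(\Omega^{n-1}M,-)=\Ext^n_\Lambda(M,-)=0$, so $\Omega^{n-1}M$ is projective and $\pd_\Lambda M\leq n-1$, a contradiction. Choosing likewise a simple $\Gamma$-module $S'$ with $\Ext^m_\Gamma(N,S')\neq 0$ (and $\Ext^j_\Gamma(N,-)=0$ for $j>m$), Lemma \ref{lem-Ext} gives $\Ext^{n+m}_{\Lambda\otimes\Gamma}(M\otimes N,S\otimes S')\cong\bigoplus_{i+j=n+m}\Ext^i_\Lambda(M,S)\otimes\Ext^j_\Gamma(N,S')$, in which every summand with $i>n$ or with $j>m$ is zero, so the only surviving one is $\Ext^n_\Lambda(M,S)\otimes_K\Ext^m_\Gamma(N,S')$, which is nonzero because a tensor product of nonzero $K$-vector spaces is nonzero. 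Hence $\Ext^{n+m}_{\Lambda\otimes\Gamma}(M\otimes N,S\otimes S')\neq 0$, so $\pd_{\Lambda\otimes\Gamma}(M\otimes N)\geq n+m$, and together with the upper bound this proves (2).

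Finally, part (1) follows by duality: $D$ is an exact duality interchanging injective and projective resolutions, so $\id_\Lambda M=\pd_{\Lambda^{op}}(DM)$ and $\id_\Gamma N=\pd_{\Gamma^{op}}(DN)$; combined with the natural isomorphisms $(\Lambda\otimes\Gamma)^{op}\cong\Lambda^{op}\otimes\Gamma^{op}$ and $D(M\otimes N)\cong DM\otimes DN$ of $(\Lambda^{op}\otimes\Gamma^{op})$-modules, part (2) applied over $\Lambda^{op}$ and $\Gamma^{op}$ yields $\id_{\Lambda\otimes\Gamma}(M\otimes N)=\pd_{\Lambda^{op}}(DM)+\pd_{\Gamma^{op}}(DN)=n+m$; alternatively one repeats the resolution/K\"unneth argument with injective resolutions, noting that a tensor product of injectives is again injective (it is $D$ of a projective module over the opposite algebra). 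The step I expect to be the real obstacle is the upper bound: Lemma \ref{lem-Ext} only computes $\Ext$ into test modules of the shape $X\otimes Y$, whereas a general $(\Lambda\otimes\Gamma)$-module need not be a tensor product, so $\pd(M\otimes N)\leq n+m$ cannot be read off that formula and must come from the explicit resolution $P_\bullet\otimes^{T}Q_\bullet$; the lower bound, by contrast, needs only the single well-chosen test module $S\otimes S'$.
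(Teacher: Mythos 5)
Your proof is correct and follows essentially the same route as the paper's: the upper bound comes from tensoring the two deleted resolutions and reading off the homology of the total complex via the K\"unneth formula (Lemma \ref{lem-homological-functorial-isomorphism}), the lower bound from Lemma \ref{lem-Ext} applied to a test module witnessing the exact dimension, and the other half of the statement by the duality $D$. The only differences are cosmetic --- you prove the projective case directly and dualize to get the injective one, the paper does the reverse, and your direct ``summand of a free module'' argument for the projectivity of $P_i\otimes Q_j$ avoids the paper's appeal to Proposition \ref{prop-tensor-product-of-proj.-inj.}, which is stated there only over an algebraically closed field.
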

\begin{proof}
We only prove (1), the proof of (2) is dual to (1).
Because ${\rm id}_{\Lambda} M = n$, so there exists object $M^{'} \in \mod{\Lambda}$  such that $\Ext^{n}_{\Lambda}(M^{'}, M) \neq 0$.
Similarly, there exists object $N^{'} \in \mod{\Gamma}$  such that $\Ext^{m}_{\Lambda}(N^{'}, N) \neq 0$.
So $\Ext^{n}_{\Lambda}(M^{'}, M) \otimes \Ext^{m}_{\Lambda}(N^{'}, N) \neq 0$, this implies
$$\Ext_{\Lambda \otimes\Gamma}^{n+m}(M^{'} \otimes N^{'}, M \otimes N)
                         =\bigoplus_{i+j=n+m}\Ext^{i}_{\Lambda}(M^{'}, M) \otimes \Ext^{j}_{\Gamma}(N^{'}, N) \neq 0$$
Hence $\id_{\Lambda \otimes\Gamma} (M \otimes N) \geq n+m$.
It is suffices to show $\id_{\Lambda \otimes\Gamma} (M \otimes N) \leq n+m$.

Put injective resolutions of $M \in \mod{\Lambda}$ and $N \in \mod{\Gamma}$ as
$$
I_{\bullet}: 0 \stackrel{}{\longrightarrow} M \stackrel{}{\longrightarrow} I_{0}\stackrel{}{\longrightarrow} I_{-1}\stackrel{}{\longrightarrow} \cdots\ \stackrel{}{\longrightarrow}  I_{-(n-1)} \stackrel{}{\longrightarrow} I_{-n} \stackrel{}{\longrightarrow} 0,
$$
$$
E_{\bullet}: 0 \stackrel{}{\longrightarrow} N \stackrel{}{\longrightarrow} E_{0}\stackrel{}{\longrightarrow} E_{-1}\stackrel{}{\longrightarrow} \cdots\ \stackrel{}{\longrightarrow}  E_{-(m-1)} \stackrel{}{\longrightarrow} E_{-m} \stackrel{}{\longrightarrow} 0
$$
where $I_{i} \in \mod{\Lambda}$, $E_{j} \in \mod{\Gamma}$ are injective modules.
We consider the delete complexes of $I_{\bullet}$ and $E_{\bullet}$ as follows
$$
I_{\bullet}^{M}: 0 \stackrel{}{\longrightarrow}  I_{0}\stackrel{}{\longrightarrow} I_{-1}\stackrel{}{\longrightarrow} \cdots\ \stackrel{}{\longrightarrow}  I_{-(n-1)} \stackrel{}{\longrightarrow} I_{-n} \stackrel{}{\longrightarrow} 0,
$$
$$
E_{\bullet}^{N}: 0 \stackrel{}{\longrightarrow}  E_{0}\stackrel{}{\longrightarrow} E_{-1}\stackrel{}{\longrightarrow} \cdots\ \stackrel{}{\longrightarrow}  E_{-(m-1)} \stackrel{}{\longrightarrow} E_{-m} \stackrel{}{\longrightarrow} 0.
$$
So it has homology
\begin{align*}
{\rm H}_{i} (I_{\bullet}^{M}) =
               \begin{cases} 0,& i\neq 0\\
                             M,& i= 0 \end{cases},\mbox{ and }
{\rm H}_{i} (E_{\bullet}^{N}) =
               \begin{cases} 0,& i\neq 0\\
                             N,& i= 0 \end{cases}.
\end{align*}
By Proposition \ref{prop-tensor-product-of-proj.-inj.}, the complex
$$
I_{\bullet}^{M}\otimes^{T} E_{\bullet}^{N}: 0 \stackrel{}{\longrightarrow}  I_{0} \otimes E_{0} \stackrel{}{\longrightarrow} (I_{\bullet}^{M}\otimes E_{\bullet}^{N})_{-1} \stackrel{}{\longrightarrow} \cdots\ \stackrel{}{\longrightarrow} (I_{\bullet}^{M}\otimes E_{\bullet}^{N})_{-(n+m)} \stackrel{}{\longrightarrow} 0,
$$
is an injective $(\Lambda \otimes \Gamma)$-module complex, we compute its homology by Lemma \ref{lem-homological-functorial-isomorphism},
\begin{align}
{\rm H}_{q} (I_{\bullet}^{M} \otimes^{T} E_{\bullet}^{N}) =\bigoplus_{i+j=q} {\rm H}_{i}(I_{\bullet}^{M}) \otimes {\rm H}_{j}(E_{\bullet}^{N})
                                                = \begin{cases} 0,& q\neq 0\\
                                                                {\rm H}_{0}(I_{\bullet}^{M}) \otimes {\rm H}_{0}(E_{\bullet}^{N}) = M \otimes N,& q= 0 \end{cases}.
\end{align}
Therefore the complex $I_{\bullet}^{M}\otimes^{T} E_{\bullet}^{N}$ is a delete injective resolutions of $M \otimes N$, this implies $\id_{\Lambda \otimes\Gamma} (M \otimes N) \leq n+m$. The proof is done.
\end{proof}

\section{Main results}\label{Tensor-products-of-higher-tilting-modules}

\subsection{Tensor products and n-Auslander-Reiten translations}
 In this subsection, for finite dimensional algebras of finite global dimensions, we study the tensor products of higher Auslander-Reiten translations and discuss whether tensor products preserves $\tau_{n}$-finite algebra.

Any finite dimensional algebra is a semi-primary  algebra which has been introduced and studied in \cite{Aus55}, we get the following result by \cite[Theorem 16] {Aus55} related to global dimensions of tensor product of finite dimensional algebras.
\begin{lemma}\label{lem-global-dimension-of-ten.-product}
Assume that $\Lambda,\Gamma$ are two finite dimensional algebras over field $K$ with radical $\rad\Lambda$, $\rad\Gamma$  respectively. If $(\Lambda/\rad \Lambda) \otimes (\Gamma/\rad \Gamma)$ is a semisimple algebra,
then the global dimension ${\rm gl.dim} (\Lambda \otimes \Gamma) = {\rm gl.dim} \Lambda + {\rm gl.dim} \Gamma$.
\end{lemma}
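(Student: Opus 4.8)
The plan is to pass to the semisimple top $(\Lambda\otimes\Gamma)/\rad(\Lambda\otimes\Gamma)$ and thereby reduce the computation of $\gl(\Lambda\otimes\Gamma)$ to that of $\pd_{\Lambda\otimes\Gamma}(\bar\Lambda\otimes\bar\Gamma)$, where $\bar\Lambda=\Lambda/\rad\Lambda$ and $\bar\Gamma=\Gamma/\rad\Gamma$; this last number is then read off from the K{\"u}nneth-type results of Section \ref{Prep}. Put $p=\gl\Lambda$ and $q=\gl\Gamma$, allowing the value $\infty$. Since $\Lambda$, $\Gamma$ and $\Lambda\otimes\Gamma$ are finite dimensional they are Artinian, so for each of them I would use the classical identity $\gl A=\pd_{A}(A/\rad A)$ (one has $\pd_A(A/\rad A)=\sup\{\pd_A S:S\text{ simple}\}$, and for Artinian $A$ this equals $\gl A$ by induction on composition length, using $\pd B\le\max\{\pd A',\pd C\}$ for a short exact sequence $0\to A'\to B\to C\to 0$). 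In particular $p=\pd_\Lambda\bar\Lambda$ and $q=\pd_\Gamma\bar\Gamma$.

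Next I would bring in the hypothesis. By Proposition \ref{prop-rad-of-tensor-product}, the assumption that $\bar\Lambda\otimes\bar\Gamma$ is semisimple yields $\rad(\Lambda\otimes\Gamma)=\rad\Lambda\otimes\Gamma+\Lambda\otimes\rad\Gamma$, and then the exact sequence (\ref{eq-exact-of-rad}) identifies $(\Lambda\otimes\Gamma)/\rad(\Lambda\otimes\Gamma)\cong\bar\Lambda\otimes\bar\Gamma$. Combining with the previous paragraph, $\gl(\Lambda\otimes\Gamma)=\pd_{\Lambda\otimes\Gamma}(\bar\Lambda\otimes\bar\Gamma)$, so it remains to prove $\pd_{\Lambda\otimes\Gamma}(\bar\Lambda\otimes\bar\Gamma)=p+q$.

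For the upper bound (vacuous if $p=\infty$ or $q=\infty$) I would take projective resolutions of $\bar\Lambda$ over $\Lambda$ and of $\bar\Gamma$ over $\Gamma$ of lengths $p$ and $q$; as $K$ is a field every module is $K$-flat, so by \cite[IX, Corollary 2.7]{CE} their tensor product $\otimes^{T}$ is a projective $(\Lambda\otimes\Gamma)$-resolution of $\bar\Lambda\otimes\bar\Gamma$ of length $p+q$, whence $\pd_{\Lambda\otimes\Gamma}(\bar\Lambda\otimes\bar\Gamma)\le p+q$ (when $p,q\ge1$ this is exactly Lemma \ref{lem-proj.-and-inj.-dimension-over-tensor-product}(2) with $M=\bar\Lambda$, $N=\bar\Gamma$). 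For the lower bound I would pick $N\in\mod\Lambda$ with $\Ext^{p}_{\Lambda}(\bar\Lambda,N)\ne0$ and $N'\in\mod\Gamma$ with $\Ext^{q}_{\Gamma}(\bar\Gamma,N')\ne0$ (possible because $\pd_\Lambda\bar\Lambda=p$ and $\pd_\Gamma\bar\Gamma=q$; if $p$ or $q$ is $0$ one may instead take $N=\bar\Lambda$ or $N'=\bar\Gamma$, the relevant $\Hom$-space being nonzero, and if $p$ or $q$ is $\infty$ one runs the argument in arbitrarily large degrees). Over a field a tensor product of nonzero vector spaces is nonzero, so $\Ext^{p}_{\Lambda}(\bar\Lambda,N)\otimes\Ext^{q}_{\Gamma}(\bar\Gamma,N')\ne0$; by Lemma \ref{lem-Ext} this is a direct summand of $\Ext^{p+q}_{\Lambda\otimes\Gamma}(\bar\Lambda\otimes\bar\Gamma,N\otimes N')$, which is therefore nonzero, so $\pd_{\Lambda\otimes\Gamma}(\bar\Lambda\otimes\bar\Gamma)\ge p+q$. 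Putting the two bounds together gives $\gl(\Lambda\otimes\Gamma)=p+q=\gl\Lambda+\gl\Gamma$.

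The only step that is not formal K{\"u}nneth bookkeeping is the identification $(\Lambda\otimes\Gamma)/\rad(\Lambda\otimes\Gamma)\cong\bar\Lambda\otimes\bar\Gamma$, that is, the place where the semisimplicity hypothesis is genuinely used: in general $\rad(\Lambda\otimes\Gamma)$ may strictly contain $\rad\Lambda\otimes\Gamma+\Lambda\otimes\rad\Gamma$, so the top is a proper quotient of $\bar\Lambda\otimes\bar\Gamma$ and can have strictly larger projective dimension over $\Lambda\otimes\Gamma$ (the classical inseparable field-extension phenomenon), and then the equality fails. Thus Proposition \ref{prop-rad-of-tensor-product} together with the standing hypothesis is precisely what drives the proof; the statement itself is Auslander's \cite[Theorem 16]{Aus55} and the argument sketched above is in essence his.
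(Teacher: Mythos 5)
Your argument is correct, but it is worth noting that the paper does not actually prove this lemma at all: it simply quotes it as \cite[Theorem 16]{Aus55}, observing that finite dimensional algebras are semi-primary. What you have written is, in effect, a self-contained reconstruction of Auslander's proof, and you say as much in your last paragraph. The reduction $\gl A=\pd_A(A/\rad A)$ for Artinian $A$, the identification $(\Lambda\otimes\Gamma)/\rad(\Lambda\otimes\Gamma)\cong(\Lambda/\rad\Lambda)\otimes(\Gamma/\rad\Gamma)$ via Proposition \ref{prop-rad-of-tensor-product} and the sequence (\ref{eq-exact-of-rad}), the upper bound by tensoring minimal-length projective resolutions (using \cite[IX, Corollary 2.7]{CE}, which needs only that $K$ is a field, rather than Lemma \ref{lem-proj.-and-inj.-dimension-over-tensor-product}, whose proof in the paper passes through results assuming $K$ algebraically closed), and the lower bound via the K\"unneth isomorphism of Lemma \ref{lem-Ext} are all sound, and your handling of the degenerate cases $p=0$, $q=0$ and $p=\infty$ or $q=\infty$ is adequate. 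You also correctly isolate where the semisimplicity hypothesis enters, namely in preventing $\rad(\Lambda\otimes\Gamma)$ from being strictly larger than $\rad\Lambda\otimes\Gamma+\Lambda\otimes\rad\Gamma$. So your proposal supplies a complete proof where the paper offers only a citation; the trade-off is length versus self-containment, and your version has the small additional merit of making visible that the whole argument works over an arbitrary field, as the lemma's hypotheses require.
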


When the global dimensions of finite dimensional algebras is finite, we investigate the relationship between the higher Auslander-Reiten translations and tensor products.
\begin{proposition}\label{prop-global-dim.-of-tensor}
Assume that $\Lambda,\Gamma$ are two finite dimensional algebras over algebraically closed field $K$ and $n,m \geq 1$. If  global dimensions ${\rm gl.dim} \Lambda \leq n$ and ${\rm gl.dim} \Gamma \leq m$, then
\begin{enumerate}
\item The global dimension ${\rm gl.dim} (\Lambda \otimes \Gamma) \leq n +m$.

\item The  $(n +m)$-Auslander-Reiten translation
$$\tau_{n+m}(M \otimes N) =  \tau_{n}M  \otimes  \tau_{m}N,
  ~\tau_{n+m}^{-}(M \otimes N) = \tau_{n}^{-}M  \otimes  \tau_{m}^{-} .$$
for every objects $M \in \mod{\Lambda}$ and $N \in \mod{\Gamma}$.
\end{enumerate}
\end{proposition}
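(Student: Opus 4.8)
For part (1): because $K$ is algebraically closed, Corollary~\ref{coro-semisimple-and-radical} gives that $(\Lambda/\rad\Lambda)\otimes(\Gamma/\rad\Gamma)$ is semisimple, so Lemma~\ref{lem-global-dimension-of-ten.-product} applies and yields $\gl(\Lambda\otimes\Gamma)=\gl\Lambda+\gl\Gamma\leq n+m$. This is the only nontrivial use of the hypothesis on $K$, and it is what makes the $\Ext$-description of $\tau_{n+m}$ legitimate in what follows.

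For part (2) the plan is to reduce to the K{\"u}nneth-type formula of Lemma~\ref{lem-Ext} and then discard all but one summand using the global-dimension bounds. Since $\gl\Lambda\leq n$, $\gl\Gamma\leq m$, and, by part (1), $\gl(\Lambda\otimes\Gamma)\leq n+m$, I may use $\tau_n=D\Ext^n_\Lambda(-,\Lambda)$, $\tau_m=D\Ext^m_\Gamma(-,\Gamma)$ and $\tau_{n+m}=D\Ext^{n+m}_{\Lambda\otimes\Gamma}(-,\Lambda\otimes\Gamma)$. For $M\in\mmod\Lambda$ and $N\in\mmod\Gamma$, Lemma~\ref{lem-Ext} supplies a functorial isomorphism
\[
\Ext^{n+m}_{\Lambda\otimes\Gamma}(M\otimes N,\Lambda\otimes\Gamma)\;\cong\;\bigoplus_{i+j=n+m}\Ext^i_\Lambda(M,\Lambda)\otimes\Ext^j_\Gamma(N,\Gamma).
\]
Because $\Ext^i_\Lambda(M,\Lambda)=0$ for $i>n$ and $\Ext^j_\Gamma(N,\Gamma)=0$ for $j>m$, the constraint $i+j=n+m$ forces $(i,j)=(n,m)$, so the right-hand side collapses to $\Ext^n_\Lambda(M,\Lambda)\otimes\Ext^m_\Gamma(N,\Gamma)$. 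Applying $D$ together with the natural isomorphism $D(A\otimes_K B)\cong DA\otimes_K DB$ valid for finite-dimensional $A,B$ then gives $\tau_{n+m}(M\otimes N)\cong D\Ext^n_\Lambda(M,\Lambda)\otimes D\Ext^m_\Gamma(N,\Gamma)=\tau_n M\otimes\tau_m N$.

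For $\tau_{n+m}^-$ I would argue identically after passing to opposite algebras: write $\tau_{n+m}^-=\Ext^{n+m}_{(\Lambda\otimes\Gamma)^{op}}(D-,\Lambda\otimes\Gamma)$, use $(\Lambda\otimes\Gamma)^{op}\cong\Lambda^{op}\otimes\Gamma^{op}$ and $D(M\otimes N)\cong DM\otimes DN$ as $\Lambda^{op}\otimes\Gamma^{op}$-modules, note $\gl\Lambda^{op}=\gl\Lambda\leq n$ and $\gl\Gamma^{op}=\gl\Gamma\leq m$, and apply Lemma~\ref{lem-Ext} over $\Lambda^{op}$ and $\Gamma^{op}$. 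The same collapse of the K{\"u}nneth sum leaves $\Ext^n_{\Lambda^{op}}(DM,\Lambda)\otimes\Ext^m_{\Gamma^{op}}(DN,\Gamma)=\tau_n^- M\otimes\tau_m^- N$, as wanted.

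The computation is essentially formal; the two points that require genuine care are (i) the collapse of the K{\"u}nneth sum, which is precisely where the hypotheses $\gl\Lambda\leq n$ and $\gl\Gamma\leq m$ enter — without them the sum has extra terms and the identity fails — and (ii) verifying that every $K$-linear isomorphism used, namely the functorial isomorphism of Lemma~\ref{lem-Ext}, the duality isomorphism $D(A\otimes B)\cong DA\otimes DB$, and $D(M\otimes N)\cong DM\otimes DN$, respects the module structures over $\Lambda\otimes\Gamma$ (respectively over $\Lambda^{op}\otimes\Gamma^{op}$), so that the identities hold in $\mmod(\Lambda\otimes\Gamma)$ and not merely as vector spaces. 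This second point — tracking the left and right actions of $\Lambda$ and $\Gamma$ through $D$ and through the canonical maps of Proposition~\ref{canonical-map} — is where I expect the bulk of the bookkeeping, although no new idea is required.
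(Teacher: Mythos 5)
Your proposal is correct and follows essentially the same route as the paper: part (1) via Corollary~\ref{coro-semisimple-and-radical} and Lemma~\ref{lem-global-dimension-of-ten.-product}, and part (2) by collapsing the K\"unneth sum of Lemma~\ref{lem-Ext} to the single summand $(i,j)=(n,m)$ using the global-dimension bounds, then applying $D$. The paper treats $\tau_{n+m}^{-}$ only with the remark that it is ``similar,'' whereas you spell out the passage to opposite algebras, but this is the same argument.
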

\begin{proof}
(1)It is directly from Lemma \ref{lem-global-dimension-of-ten.-product}, since by Corollary \ref{coro-semisimple-and-radical}, $(\Lambda/\rad \Lambda) \otimes (\Gamma/\rad \Gamma)$ is a semisimple algebra when field $K$ is algebraically closed.

(2) For any $0 \leq i,j \leq n+m$ with $i+j=n+m$, we first prove that $\Ext^{i}_{\Lambda}(M, \Lambda) \otimes \Ext^{j}_{\Gamma}(N, \Gamma) =0$ for $i \neq n$.  Without loss of generality, suppose that $0 \leq i < n$. Then $m+1 \leq j= n+m-i \leq n+m$, so $\Ext^{j}_{\Gamma}(N, \Gamma) =0$ since ${\rm gl.dim} \Gamma \leq m$, this implies $\Ext^{i}_{\Lambda}(M, \Lambda) \otimes \Ext^{j}_{\Gamma}(N, \Gamma) =0$.

By (1) and Lemma \ref{lem-Ext}, for every objects $M \in \mod{\Lambda}$ and $N \in \mod{\Gamma}$,
\begin{align*}
\tau_{n+m}(M \otimes N) &=D\Ext_{\Lambda \otimes\Gamma}^{n+m}(M \otimes N, \Lambda \otimes\Gamma)  \\
                         &=D(\bigoplus_{i+j=n+m}\Ext^{i}_{\Lambda}(M, \Lambda) \otimes \Ext^{j}_{\Gamma}(N, \Gamma)) \\
                       &=D(\Ext^{n}_{\Lambda}(M, \Lambda) \otimes \Ext^{m}_{\Gamma}(N, \Gamma))  \\
                       &=D\Ext^{n}_{\Lambda}(M, \Lambda) \otimes D\Ext^{m}_{\Gamma}(N, \Gamma)   \\
                       &=\tau_{n}M  \otimes  \tau_{m}N
\end{align*}
Hence the first assertion follows, the proof of the second assertion is  similar.
\end{proof}

As a application of the above Proposition, we discuss the relationship between tensor products and $\tau_{n}$-finite algebras.

\begin{theorem}\label{them-Tensor-products-and-tau{n}-finite}
Suppose that  $\Lambda,\Gamma$ are two finite dimensional algebras over algebraically closed field $K$ and $n,m \geq 1$. Assume global dimensions ${\rm gl.dim} \Lambda \leq n$ and ${\rm gl.dim} \Gamma \leq m$. Then
\begin{enumerate}
\item $\Lambda$ is $\tau_{n}$-finite or $\Gamma$ is $\tau_{m}$-finite if and only if $\Lambda \otimes \Gamma$ is $\tau_{(n+m)}$-finite.

\item $\Lambda$ is $\tau_{n}$-infinite and $\Gamma$ is $\tau_{m}$-infinite if and only if $\Lambda \otimes \Gamma$ is $\tau_{(n+m)}$-infinite.
\end{enumerate}
\end{theorem}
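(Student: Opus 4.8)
The plan is to reduce both statements to a single homological identity, exploiting that $D(\Lambda \otimes \Gamma) \cong D\Lambda \otimes D\Gamma$ as $(\Lambda \otimes \Gamma)$-modules together with the multiplicativity of $n$-Auslander--Reiten translations established in Proposition \ref{prop-global-dim.-of-tensor}.

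First I would note that Proposition \ref{prop-global-dim.-of-tensor}(1) gives ${\rm gl.dim}(\Lambda \otimes \Gamma) \leq n+m$, so that the notions ``$\tau_{(n+m)}$-finite'' and ``$\tau_{(n+m)}$-infinite'' are meaningful for $\Lambda \otimes \Gamma$; together with the hypotheses on $\Lambda$ and $\Gamma$, each of the three algebras satisfies exactly one of the two conditions in the appropriate degree. The key step is then: since $\tau_{n}(D\Lambda)$ lies in $\mod\Lambda$ and $\tau_{m}(D\Gamma)$ lies in $\mod\Gamma$, the identity $\tau_{n+m}(M \otimes N) = \tau_{n}M \otimes \tau_{m}N$ of Proposition \ref{prop-global-dim.-of-tensor}(2) may be applied repeatedly, yielding for every positive integer $l$
$$\tau_{n+m}^{l}\big(D(\Lambda \otimes \Gamma)\big) = \tau_{n+m}^{l}(D\Lambda \otimes D\Gamma) = \tau_{n}^{l}(D\Lambda) \otimes \tau_{m}^{l}(D\Gamma).$$
Since a tensor product of two $K$-vector spaces vanishes precisely when one of the factors vanishes, $\tau_{n+m}^{l}(D(\Lambda \otimes \Gamma)) = 0$ if and only if $\tau_{n}^{l}(D\Lambda) = 0$ or $\tau_{m}^{l}(D\Gamma) = 0$.

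Statement (1) follows at once. For the forward implication, if say $\Lambda$ is $\tau_{n}$-finite, choose $l$ with $\tau_{n}^{l}(D\Lambda) = 0$; then the displayed identity gives $\tau_{n+m}^{l}(D(\Lambda \otimes \Gamma)) = 0$, and with ${\rm gl.dim}(\Lambda \otimes \Gamma) \leq n+m$ this means $\Lambda \otimes \Gamma$ is $\tau_{(n+m)}$-finite. For the reverse implication, if $\Lambda \otimes \Gamma$ is $\tau_{(n+m)}$-finite, pick $l$ with $\tau_{n+m}^{l}(D(\Lambda \otimes \Gamma)) = 0$; by the vanishing criterion above, $\tau_{n}^{l}(D\Lambda) = 0$ or $\tau_{m}^{l}(D\Gamma) = 0$, i.e. $\Lambda$ is $\tau_{n}$-finite or $\Gamma$ is $\tau_{m}$-finite. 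Statement (2) is the contrapositive of (1): for an algebra of global dimension at most $k$, being $\tau_{k}$-infinite is exactly the negation of being $\tau_{k}$-finite, so (2) is obtained by negating both sides of the equivalence in (1).

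I do not anticipate a genuine obstacle. The only points requiring a line of care are the identification $D(\Lambda \otimes \Gamma) \cong D\Lambda \otimes D\Gamma$ (already used implicitly in Lemma \ref{lem-Ext-condition}), the legitimacy of iterating the $\tau$-formula -- valid because after each application one is again tensoring a $\Lambda$-module with a $\Gamma$-module -- and the elementary fact that $V \otimes_{K} W = 0$ forces $V = 0$ or $W = 0$.
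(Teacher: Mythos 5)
Your argument is correct and follows essentially the same route as the paper: both rest on iterating the identity $\tau_{n+m}(M\otimes N)=\tau_{n}M\otimes\tau_{m}N$ from Proposition \ref{prop-global-dim.-of-tensor} to get $\tau_{n+m}^{l}(D(\Lambda\otimes\Gamma))=\tau_{n}^{l}(D\Lambda)\otimes\tau_{m}^{l}(D\Gamma)$, and then use that a tensor product of $K$-vector spaces vanishes iff one factor does. Your observation that (2) is simply the negation of both sides of (1) is a slightly cleaner way of handling the second part than the paper's ``the proof is similar,'' but it is not a different method.
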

\begin{proof}
We only prove (1), the proof of (2) is similar to (1). Under the assumption of global dimensions ${\rm gl.dim} \Lambda \leq n$ and ${\rm gl.dim} \Gamma \leq m$,
by Proposition \ref{prop-global-dim.-of-tensor},
we obtain $\tau_{(n+m)}^{i}(D(\Lambda \otimes\Gamma)) =\tau_{(n+m)}^{i}(D\Lambda \otimes D\Gamma) =\tau_{n}^{i}(D\Lambda) \otimes \tau_{m}^{i}(D\Gamma)$ for $i \geq 0$.
Hence for positive integer $i_{0}$, it is follows that $\tau_{(n+m)}^{i_{0}}(D(\Lambda \otimes\Gamma))=0$ if and only if $\tau_{n}^{i_{0}}(D\Lambda)=0$ or $\tau_{m}^{i_{0}}(D\Gamma)=0$.
Therefore, $\Lambda$ is $\tau_{n}$-finite or $\Gamma$ is $\tau_{m}$-finite if and only if $\Lambda \otimes \Gamma$ is $\tau_{(n+m)}$-finite.
\end{proof}

As a corollary of the Theorem \ref{them-Tensor-products-and-tau{n}-finite}, we get the following results.

\begin{corollary}
Suppose that  $\Lambda,\Gamma$ are two finite dimensional algebras over algebraically closed field $K$ and $n,m \geq 1$.
\begin{enumerate}
\item If $\Lambda$ is $\tau_{n}$-finite, then ${\rm gl.dim} \Gamma \leq m$ if and only if $\Lambda \otimes \Gamma$ is $\tau_{(n+m)}$-finite.

\item If $\Lambda$ is $\tau_{n}$-infinite, then $\Gamma$ is $\tau_{m}$-infinite if and only if $\Lambda \otimes \Gamma$ is $\tau_{(n+m)}$-infinite.
\end{enumerate}
\end{corollary}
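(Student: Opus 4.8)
The plan is to read off the Corollary from Theorem \ref{them-Tensor-products-and-tau{n}-finite}, once the global-dimension hypotheses of that theorem are supplied. Two observations make this possible. First, by Definition \ref{def-tau{n}-finite} any algebra that is $\tau_{n}$-finite or $\tau_{n}$-infinite satisfies ${\rm gl.dim} \leq n$; in particular our $\Lambda$ has ${\rm gl.dim}\Lambda \leq n$ throughout. Second, since $K$ is algebraically closed, Corollary \ref{coro-semisimple-and-radical} tells us that $(\Lambda/\rad\Lambda)\otimes(\Gamma/\rad\Gamma)$ is semisimple, so Lemma \ref{lem-global-dimension-of-ten.-product} gives ${\rm gl.dim}(\Lambda\otimes\Gamma) = {\rm gl.dim}\Lambda + {\rm gl.dim}\Gamma$. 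Thus, once ${\rm gl.dim}\Lambda = n$ is taken into account, ``${\rm gl.dim}(\Lambda\otimes\Gamma) \leq n+m$'' becomes equivalent to ``${\rm gl.dim}\Gamma \leq m$''.

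For (1): assume $\Lambda$ is $\tau_{n}$-finite, so ${\rm gl.dim}\Lambda \leq n$. If ${\rm gl.dim}\Gamma \leq m$, then both global-dimension bounds of Theorem \ref{them-Tensor-products-and-tau{n}-finite} hold, and since $\Lambda$ is $\tau_{n}$-finite the disjunction ``$\Lambda$ is $\tau_{n}$-finite or $\Gamma$ is $\tau_{m}$-finite'' is satisfied, whence Theorem \ref{them-Tensor-products-and-tau{n}-finite}(1) yields that $\Lambda\otimes\Gamma$ is $\tau_{(n+m)}$-finite. Conversely, if $\Lambda\otimes\Gamma$ is $\tau_{(n+m)}$-finite, then ${\rm gl.dim}(\Lambda\otimes\Gamma)\leq n+m$ by Definition \ref{def-tau{n}-finite}, hence ${\rm gl.dim}\Lambda + {\rm gl.dim}\Gamma \leq n+m$ by the displayed equality, and therefore ${\rm gl.dim}\Gamma \leq m$.

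For (2): assume $\Lambda$ is $\tau_{n}$-infinite, so ${\rm gl.dim}\Lambda \leq n$ and $\tau_{n}^{l}(D\Lambda)\neq 0$ for all $l \geq 1$. If $\Gamma$ is $\tau_{m}$-infinite then ${\rm gl.dim}\Gamma \leq m$, so Theorem \ref{them-Tensor-products-and-tau{n}-finite}(2) applies and $\Lambda\otimes\Gamma$ is $\tau_{(n+m)}$-infinite. Conversely, if $\Lambda\otimes\Gamma$ is $\tau_{(n+m)}$-infinite, then ${\rm gl.dim}(\Lambda\otimes\Gamma)\leq n+m$ forces ${\rm gl.dim}\Gamma \leq m$ as in (1); moreover Proposition \ref{prop-global-dim.-of-tensor}(2) gives $\tau_{(n+m)}^{l}(D(\Lambda\otimes\Gamma)) \cong \tau_{n}^{l}(D\Lambda)\otimes\tau_{m}^{l}(D\Gamma)$, which is nonzero for every $l$; since a tensor product of $K$-vector spaces vanishes precisely when one of the factors does, we conclude $\tau_{m}^{l}(D\Gamma)\neq 0$ for all $l$, so $\Gamma$ is $\tau_{m}$-infinite.

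The step that needs the most care is the bookkeeping of global dimensions in the two converse implications: to deduce ${\rm gl.dim}\Gamma \leq m$ from ${\rm gl.dim}(\Lambda\otimes\Gamma)\leq n+m$ and ${\rm gl.dim}\Lambda \leq n$ one really uses that $\Lambda$ has global dimension exactly $n$ — the normalization in force for the algebras relevant to higher Auslander--Reiten theory. Granting this, everything reduces formally to Theorem \ref{them-Tensor-products-and-tau{n}-finite} and Proposition \ref{prop-global-dim.-of-tensor}, with no further computation required.
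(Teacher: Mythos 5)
Your route is the same as the paper's: both arguments reduce the corollary to Theorem \ref{them-Tensor-products-and-tau{n}-finite} together with the additivity of global dimension over an algebraically closed field (Corollary \ref{coro-semisimple-and-radical} plus Lemma \ref{lem-global-dimension-of-ten.-product}), and your handling of the forward implications and of the translate bookkeeping in part (2) via Proposition \ref{prop-global-dim.-of-tensor}(2) is exactly what the paper's ``the proof of (2) is similar to (1)'' unfolds to. So in outline you have reproduced the intended argument.

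However, the step you yourself single out is a genuine gap, and you close it by an assumption that is not available. To deduce ${\rm gl.dim}\,\Gamma\leq m$ from ${\rm gl.dim}\,(\Lambda\otimes\Gamma)={\rm gl.dim}\,\Lambda+{\rm gl.dim}\,\Gamma\leq n+m$ you need ${\rm gl.dim}\,\Lambda=n$ exactly, whereas Definition \ref{def-tau{n}-finite} only demands ${\rm gl.dim}\,\Lambda\leq n$, and there is no ``normalization in force'' upgrading this: a semisimple algebra such as $\Lambda=K$ is $\tau_{n}$-finite for every $n\geq 1$, since $\tau_{n}(D\Lambda)=D\Ext^{n}_{\Lambda}(D\Lambda,\Lambda)=0$. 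With $\Lambda=K$ one has $\Lambda\otimes\Gamma\cong\Gamma$, so taking $n=m=1$ and any $\tau_{2}$-finite $\Gamma$ of global dimension exactly $2$ (e.g.\ the Auslander algebra of $K[x]/(x^{2})$) gives a $\tau_{(n+m)}$-finite tensor product with ${\rm gl.dim}\,\Gamma=2>m$, contradicting the ``if'' direction of (1); the same defect infects the converse direction of (2). You should know that the paper's own proof makes the identical jump silently (its claim ``${\rm gl.dim}\,\Gamma\leq m$ if and only if ${\rm gl.dim}\,(\Lambda\otimes\Gamma)\leq n+m$'' is false when ${\rm gl.dim}\,\Lambda<n$), so you have in fact located a weakness of the statement itself rather than introduced a new error; but as written your converse implications do not follow from the stated hypotheses, and the corollary needs either the extra assumption ${\rm gl.dim}\,\Lambda=n$ or a reformulation.
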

\begin{proof}
We only prove (1), the proof of (2) is similar to (1).
Assume $\Lambda$ is $\tau_{n}$-finite, then ${\rm gl.dim} \Lambda \leq n$.
Hence by Lemma \ref{lem-global-dimension-of-ten.-product}, ${\rm gl.dim} \Gamma \leq m$ if and only if ${\rm gl.dim} (\Lambda \otimes \Gamma)\leq n+m$.
The rest is obtained by Theorem \ref{them-Tensor-products-and-tau{n}-finite}(1).
\end{proof}

\begin{corollary}
Suppose that $\Lambda$ is a finite dimensional algebra over algebraically closed field $K$ and $n\geq 1$. Let $\Lambda^{e} =\Lambda \otimes_{K} \Lambda^{op}$ be the enveloping algebra of $\Lambda$.
Then
 $\Lambda$ is $\tau_{n}$-finite if and only if $\Lambda^{e}$ is $\tau_{2n}$-finite.
\end{corollary}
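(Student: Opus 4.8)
The plan is to deduce this from Theorem \ref{them-Tensor-products-and-tau{n}-finite} in the special case $\Gamma = \Lambda^{op}$, $m = n$, once the global-dimension hypotheses of that theorem have been checked on both sides and the left-right symmetry of $\tau_n$-finiteness (Lemma \ref{lem-tau{n}-finite}) is invoked.

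For the forward implication I would argue as follows. If $\Lambda$ is $\tau_n$-finite, then $\gl\Lambda \le n$ by Definition \ref{def-tau{n}-finite}, and $\Lambda^{op}$ is $\tau_n$-finite by Lemma \ref{lem-tau{n}-finite}, so $\gl\Lambda^{op} \le n$. Then Theorem \ref{them-Tensor-products-and-tau{n}-finite}(1) with $\Gamma = \Lambda^{op}$ and $m = n$ applies: the condition ``$\Lambda$ is $\tau_n$-finite or $\Lambda^{op}$ is $\tau_n$-finite'' holds, hence $\Lambda^e = \Lambda \otimes \Lambda^{op}$ is $\tau_{2n}$-finite.

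For the converse I would start from $\Lambda^e$ being $\tau_{2n}$-finite, which gives $\gl\Lambda^e \le 2n$. Since $K$ is algebraically closed, $(\Lambda/\rad\Lambda)\otimes(\Lambda^{op}/\rad\Lambda^{op})$ is semisimple by Corollary \ref{coro-semisimple-and-radical}, so Lemma \ref{lem-global-dimension-of-ten.-product} gives $\gl\Lambda^e = \gl\Lambda + \gl\Lambda^{op}$; combined with the classical identity $\gl\Lambda = \gl\Lambda^{op}$ (seen, e.g., through the $K$-duality $D$, which turns projective resolutions into injective ones), this forces $\gl\Lambda \le n$ and $\gl\Lambda^{op} \le n$. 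Now Theorem \ref{them-Tensor-products-and-tau{n}-finite}(1) applies once more, so $\Lambda^e$ being $\tau_{2n}$-finite yields ``$\Lambda$ is $\tau_n$-finite or $\Lambda^{op}$ is $\tau_n$-finite'', and by Lemma \ref{lem-tau{n}-finite} the two alternatives are equivalent, giving that $\Lambda$ is $\tau_n$-finite.

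I expect the only real obstacle to lie in the converse direction: recovering $\gl\Lambda \le n$ from $\gl\Lambda^e \le 2n$ is not automatic and relies on two ingredients — the additivity $\gl(\Lambda\otimes\Lambda^{op}) = \gl\Lambda + \gl\Lambda^{op}$, which is exactly what the algebraically closed hypothesis secures via Corollary \ref{coro-semisimple-and-radical} and Lemma \ref{lem-global-dimension-of-ten.-product}, and the symmetry $\gl\Lambda = \gl\Lambda^{op}$. Everything else is a direct specialization of Theorem \ref{them-Tensor-products-and-tau{n}-finite} together with Lemma \ref{lem-tau{n}-finite}.
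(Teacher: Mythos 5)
Your proposal is correct and follows essentially the same route as the paper: both arguments combine the identity $\gl\Lambda=\gl\Lambda^{op}$ with Lemma \ref{lem-global-dimension-of-ten.-product} to translate $\gl\Lambda^{e}\leq 2n$ into $\gl\Lambda\leq n$, and then apply Theorem \ref{them-Tensor-products-and-tau{n}-finite}(1) together with Lemma \ref{lem-tau{n}-finite} to identify the three $\tau$-finiteness conditions. Your write-up is merely more explicit than the paper's about verifying the global-dimension hypotheses separately in each direction.
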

\begin{proof}
Since ${\rm gl.dim} \Lambda ={\rm gl.dim} \Lambda^{op}$, so by Lemma \ref{lem-global-dimension-of-ten.-product},
${\rm gl.dim} \Lambda^{e} ={\rm gl.dim} \Lambda +{\rm gl.dim} \Lambda^{op}$,
hence ${\rm gl.dim} \Lambda \leq n$ if and only if ${\rm gl.dim} \Lambda^{e} \leq 2n$.
By Lemma \ref{lem-tau{n}-finite} and Theorem \ref{them-Tensor-products-and-tau{n}-finite},
$\Lambda$ is $\tau_{n}$-finite if and only if $\Lambda^{op}$ is $\tau_{n}$-finite if and only if $\Lambda^{e}$ is $\tau_{2n}$-finite.
\end{proof}

\begin{proposition}\label{prop-tensor-product-333}
Suppose that $K$ is an algebraically closed field and $n,m \geq 1$. Assume $\Lambda$ is an $n$-representation finite algebra and $\Gamma$ is an $m$-representation infinite algebra. Then
 $\Lambda \otimes \Gamma$ is $\tau_{(n+m)}$-finite and neither $(n+m)$-representation infinite nor $(n+m)$-representation finite.
\end{proposition}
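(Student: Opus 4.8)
The plan is to read the $\tau_{(n+m)}$-finiteness off Theorem~\ref{them-Tensor-products-and-tau{n}-finite} and then to exclude the two named classes one at a time, the exclusion of $(n+m)$-representation finiteness being the substantial step.

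First I would collect the standing facts. Since $\Lambda$ is $n$-representation finite, $\gl\Lambda=n$ and $\Lambda$ is $\tau_{n}$-finite, so $\tau_{n}^{i}(D\Lambda)=0$ for $i\gg0$ (the $n$-cluster tilting subcategory of $\mmod\Lambda$ is the additive closure of $\bigoplus_{i\ge0}\tau_{n}^{i}(D\Lambda)$ and is generated by a finitely generated module; see \cite{IO12,HIO14,I11}). Since $\Gamma$ is $m$-representation infinite, $\gl\Gamma=m$ and $\Gamma$ is $\tau_{m}$-infinite by Example~\ref{exam-tau{n}-finite}(2); moreover $\nu_{m}$ is an autoequivalence of $D^{b}(\mmod\Gamma)$ with $\tau_{m}^{i}(D\Gamma)=\nu_{m}^{i}(D\Gamma)\in\mmod\Gamma$ for all $i\ge0$, so for any indecomposable injective $\Gamma$-module $J$ the object $\nu_{m}^{i}(J)$ is a nonzero direct summand of the module $\tau_{m}^{i}(D\Gamma)$ and hence $\tau_{m}^{i}(J)=\nu_{m}^{i}(J)\ne0$ for all $i\ge0$. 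By Corollary~\ref{coro-semisimple-and-radical} and Lemma~\ref{lem-global-dimension-of-ten.-product}, $\gl(\Lambda\otimes\Gamma)=n+m$; and since $\Lambda$ is $\tau_{n}$-finite, Theorem~\ref{them-Tensor-products-and-tau{n}-finite}(1) gives that $\Lambda\otimes\Gamma$ is $\tau_{(n+m)}$-finite.

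Consequently $\Lambda\otimes\Gamma$ is not $(n+m)$-representation infinite: every $(n+m)$-representation infinite algebra is $\tau_{(n+m)}$-infinite by Example~\ref{exam-tau{n}-finite}(2), while by Definition~\ref{def-tau{n}-finite} no algebra is simultaneously $\tau_{(n+m)}$-finite and $\tau_{(n+m)}$-infinite.

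It remains to show $\Lambda\otimes\Gamma$ is not $(n+m)$-representation finite, which I would do by contradiction. Assume it were. I will use the structural fact that in an $(n+m)$-representation finite algebra the $\tau_{(n+m)}$-orbit of an indecomposable injective module is a finite chain of indecomposable modules whose last nonzero term is an indecomposable projective module (this follows from the description of the $(n+m)$-cluster tilting subcategory; see \cite{IO12,HIO14}). Fix indecomposable injective modules $J_{\Lambda}\in\mmod\Lambda$ and $J_{\Gamma}\in\mmod\Gamma$; by Proposition~\ref{prop-tensor-product-of-primitive-idempotents}(3), $J_{\Lambda}\otimes J_{\Gamma}$ is an indecomposable injective $(\Lambda\otimes\Gamma)$-module. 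Since $\gl\Lambda\le n$ and $\gl\Gamma\le m$, iterating the first identity of Proposition~\ref{prop-global-dim.-of-tensor}(2) gives $\tau_{(n+m)}^{i}(J_{\Lambda}\otimes J_{\Gamma})=\tau_{n}^{i}(J_{\Lambda})\otimes\tau_{m}^{i}(J_{\Gamma})$ for every $i\ge0$. Let $r\ge0$ be the largest integer with $\tau_{n}^{r}(J_{\Lambda})\ne0$, which exists because $\tau_{n}^{i}(D\Lambda)=0$ for $i\gg0$. Then $\tau_{(n+m)}^{r}(J_{\Lambda}\otimes J_{\Gamma})=\tau_{n}^{r}(J_{\Lambda})\otimes\tau_{m}^{r}(J_{\Gamma})$ is nonzero, being a tensor product over the field $K$ of two nonzero modules, whereas $\tau_{(n+m)}^{r+1}(J_{\Lambda}\otimes J_{\Gamma})=\tau_{n}^{r+1}(J_{\Lambda})\otimes\tau_{m}^{r+1}(J_{\Gamma})=0$; hence $\tau_{n}^{r}(J_{\Lambda})\otimes\tau_{m}^{r}(J_{\Gamma})$ is the last nonzero term of the orbit and must be an indecomposable projective $(\Lambda\otimes\Gamma)$-module. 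By Proposition~\ref{prop-tensor-product-of-proj.-inj.}(1) this forces $\tau_{m}^{r}(J_{\Gamma})$ to be a nonzero projective $\Gamma$-module; but $\gl\Gamma\le m$ with $m\ge1$, so $\tau_{m}=D\Ext_{\Gamma}^{m}(-,\Gamma)$ annihilates projectives, whence $\tau_{m}^{r+1}(J_{\Gamma})=\tau_{m}(\tau_{m}^{r}(J_{\Gamma}))=0$, contradicting $\tau_{m}^{i}(J_{\Gamma})\ne0$ for all $i\ge0$. This contradiction shows $\Lambda\otimes\Gamma$ is not $(n+m)$-representation finite. I expect the main obstacle to be making the quoted orbit structure of $(n+m)$-representation finite algebras fully precise; alternatively one can avoid it by checking, via the K\"unneth-type isomorphism of Lemma~\ref{lem-Ext} applied to the Nakayama functors (so that $\nu_{n+m}^{-i}(\Lambda\otimes\Gamma)\simeq\nu_{n}^{-i}(\Lambda)\otimes^{T}\nu_{m}^{-i}(\Gamma)$), that some $\nu_{n+m}^{-i}(\Lambda\otimes\Gamma)$ has cohomology in degree $-n$, which is not divisible by $n+m$, so that $\Lambda\otimes\Gamma$ fails to be $(n+m)$-hereditary.
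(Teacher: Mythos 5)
Your proposal is correct and follows essentially the same route as the paper: $\tau_{(n+m)}$-finiteness via Theorem \ref{them-Tensor-products-and-tau{n}-finite}, exclusion of $(n+m)$-representation infiniteness by the finite/infinite dichotomy, and exclusion of $(n+m)$-representation finiteness by combining $\tau_{(n+m)}^{i}(I_{\Lambda}\otimes I_{\Gamma})=\tau_{n}^{i}(I_{\Lambda})\otimes\tau_{m}^{i}(I_{\Gamma})$ with the fact that $\tau_{m}^{i}(I_{\Gamma})=\nu_{m}^{i}(I_{\Gamma})\neq0$ is never projective and with the characterization of higher representation finiteness through $\tau$-orbits of injectives (\cite[Proposition 1.3]{I11}). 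Your variant of the last step, which pins down the last nonzero term of the orbit at the index $r$ where $\tau_{n}^{r}(I_{\Lambda})$ dies, is if anything slightly more careful than the paper's phrasing but is the same argument in substance.
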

\begin{proof}
Because $\Lambda$ is a special $n$-complete algebra which is $\tau_{n}$-finite, so by Theorem \ref{them-Tensor-products-and-tau{n}-finite} $\Lambda \otimes \Gamma$ is $\tau_{(n+m)}$-finite, this implies that $\Lambda \otimes \Gamma$ is not $(n+m)$-representation infinite.
Assume that $I_{\Lambda} \in \mod\Lambda$ and $I_{\Gamma} \in \mod\Gamma$ are indecomposable injective modules,
then by Proposition \ref{prop-tensor-product-of-primitive-idempotents} $I_{\Lambda} \otimes I_{\Gamma}$ is a indecomposable injective $(\Lambda \otimes\Gamma)$-module. By \cite[Proposition 4.21] {HIO14}, $\tau_{m}^{i}(I_{\Gamma})=\nu_{m}^{i}(I_{\Gamma}) \neq 0$ for $i \geq 0$, so $\tau_{m}^{i}(I_{\Gamma})$ is not projective for $i \geq 0$.
This implies $\tau_{(n+m)}^{i}(I_{\Lambda} \otimes I_{\Gamma}) =\tau_{n}^{i}(I_{\Lambda}) \otimes \tau_{m}^{i}(I_{\Gamma})$ is not projective for $i \geq 0$ by Proposition \ref{prop-tensor-product-of-primitive-idempotents}.
It is follows from \cite[Proposition 1.3(b)] {I11} that $\Lambda \otimes \Gamma$ is not $(n+m)$-representation finite, we complete the proof.
\end{proof}

\subsection{Tensor products of higher APR tilting modules}
Higher APR tilting modules and higher BB tilting modules was introduced and studied in higher Auslander-Reiten theory.
In this subsection, we study how to construct new higher APR tilting modules and higher BB tilting modules over tensor products of algebras.

Noticed tensor product of basic finite dimensional algebras is also a basic finite dimensional algebra. Firstly we construct higher BB tilting modules by tensor products.

\begin{theorem}\label{prop-n-BB-and-finite-global-dimensions}
Suppose that  $\Lambda,\Gamma$ are basic finite dimensional algebras  over algebraically closed field $K$ and $n,m \geq 1$. Assume global dimensions ${\rm gl.dim} \Lambda \leq n$ and ${\rm gl.dim} \Gamma \leq m$. Suppose that $S_{\Lambda} \in \mod\Lambda$, $S_{\Gamma} \in \mod\Gamma$ are simple modules.
Let $P_{\Lambda} \in \mod\Lambda$, $P_{\Gamma} \in \mod\Gamma$ be the projective cover of $S_{\Lambda}$, $S_{\Gamma}$ respectively.
If
$$T_{\Lambda} = (\tau_{n}^{-}S_{\Lambda}) \oplus ({\Lambda} / P_{\Lambda}), ~T_{\Gamma} = (\tau_{m}^{-}S_{\Gamma}) \oplus ({\Gamma} / P_{\Gamma})$$
are $n$-respectively $m$-$BB$ tilting modules,
then  $T_{\Lambda \otimes \Gamma} = \tau_{n+m}^{-}(S_{\Lambda}\otimes S_{\Gamma}) \oplus ((\Lambda \otimes \Gamma) / (P_{\Lambda} \otimes P_{\Gamma}))$ is an $(n+m)$-BB tilting $(\Lambda \otimes \Gamma)$-module associated with $S_{\Lambda}\otimes S_{\Gamma}$.
\end{theorem}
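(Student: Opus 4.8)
The plan is to verify directly that $S_{\Lambda}\otimes S_{\Gamma}$ fulfils the hypotheses of Definition \ref{def-n-BB-tilting-module} for the algebra $\Lambda\otimes\Gamma$ and the index $n+m$, and that the projective summand of $T_{\Lambda\otimes\Gamma}$ is precisely the complement of the projective cover of $S_{\Lambda}\otimes S_{\Gamma}$ prescribed there. Almost every ingredient has been isolated in Section \ref{Prep}; the work consists of assembling them, the only delicate point being the self-extension vanishing.

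First I would gather the structural input. By Proposition \ref{prop-tensor-of-basic-algebras}, $\Lambda\otimes\Gamma$ is again basic finite dimensional, and by Proposition \ref{prop-global-dim.-of-tensor}(1) its global dimension is at most $n+m$. Writing $P_{\Lambda}=\Lambda e$ and $P_{\Gamma}=\Gamma f$ for primitive idempotents $e\in\Lambda$, $f\in\Gamma$ (legitimate since the projective cover of a simple module over a basic algebra is an indecomposable projective summand), Proposition \ref{prop-tensor-product-of-primitive-idempotents} shows that $P_{\Lambda}\otimes P_{\Gamma}=(\Lambda\otimes\Gamma)(e\otimes f)$ is an indecomposable projective direct summand of $\Lambda\otimes\Gamma$, and Proposition \ref{prop-tensor-product-of-proj.-cover}(1) identifies it with the projective cover of $S_{\Lambda}\otimes S_{\Gamma}$; Corollary \ref{coro-tensor-product-of-simple} says the latter is a simple $(\Lambda\otimes\Gamma)$-module. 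Since $\Lambda\cong P_{\Lambda}\oplus(\Lambda/P_{\Lambda})$ and $\Gamma\cong P_{\Gamma}\oplus(\Gamma/P_{\Gamma})$, distributing $\otimes$ over these direct sums yields $\Lambda\otimes\Gamma\cong(P_{\Lambda}\otimes P_{\Gamma})\oplus\big((\Lambda\otimes\Gamma)/(P_{\Lambda}\otimes P_{\Gamma})\big)$, so the projective part of $T_{\Lambda\otimes\Gamma}$ is exactly the complement demanded by Definition \ref{def-n-BB-tilting-module}. (Moreover, Proposition \ref{prop-global-dim.-of-tensor}(2) gives $\tau_{n+m}^{-}(S_{\Lambda}\otimes S_{\Gamma})\cong\tau_{n}^{-}S_{\Lambda}\otimes\tau_{m}^{-}S_{\Gamma}$, exhibiting $T_{\Lambda\otimes\Gamma}$ as built from the tensor factors of $T_{\Lambda}$ and $T_{\Gamma}$.)

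It then remains to check the two vanishing conditions for $S_{\Lambda}\otimes S_{\Gamma}$. For condition (1), the hypothesis that $T_{\Lambda}$ and $T_{\Gamma}$ are BB tilting modules supplies $\Ext^{i}_{\Lambda}(D\Lambda,S_{\Lambda})=0$ for $0\le i<n$ and $\Ext^{j}_{\Gamma}(D\Gamma,S_{\Gamma})=0$ for $0\le j<m$, and Lemma \ref{lem-Ext-condition}(1) immediately upgrades this to $\Ext^{q}_{\Lambda\otimes\Gamma}(D(\Lambda\otimes\Gamma),S_{\Lambda}\otimes S_{\Gamma})=0$ for all $0\le q<n+m$. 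For condition (2) I would use Lemma \ref{lem-Ext}:
$$\Ext^{q}_{\Lambda\otimes\Gamma}(S_{\Lambda}\otimes S_{\Gamma},\,S_{\Lambda}\otimes S_{\Gamma})\;\cong\;\bigoplus_{i+j=q}\Ext^{i}_{\Lambda}(S_{\Lambda},S_{\Lambda})\otimes\Ext^{j}_{\Gamma}(S_{\Gamma},S_{\Gamma}),$$
and argue that every summand vanishes for $1\le q\le n+m$. Fixing $i,j\ge0$ with $i+j=q$: if $1\le i\le n$ the factor $\Ext^{i}_{\Lambda}(S_{\Lambda},S_{\Lambda})$ vanishes by the defining property of the $n$-BB tilting module $T_{\Lambda}$; if $i>n$ it vanishes because $\mathrm{gl.dim}\,\Lambda\le n$; and if $i=0$ then $j=q\ge1$, so $\Ext^{j}_{\Gamma}(S_{\Gamma},S_{\Gamma})$ vanishes, either by the defining property of $T_{\Gamma}$ when $1\le j\le m$ or because $\mathrm{gl.dim}\,\Gamma\le m$ when $j>m$. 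Hence condition (2) holds, and together with the preceding steps this shows $T_{\Lambda\otimes\Gamma}$ is the $(n+m)$-BB tilting module associated with $S_{\Lambda}\otimes S_{\Gamma}$. I expect this final case analysis to be the only genuinely delicate point: the two given ranges $1\le i\le n$ and $1\le j\le m$ do not by themselves cover $1\le q\le n+m$, so the global-dimension bounds must be invoked to close the gap, and—unlike for condition (1)—there is no single earlier lemma that outputs condition (2) directly.
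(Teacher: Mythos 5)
Your proposal is correct and follows essentially the same route as the paper: the projective cover and simplicity of $S_{\Lambda}\otimes S_{\Gamma}$ via Corollary \ref{coro-tensor-product-of-simple} and Proposition \ref{prop-tensor-product-of-proj.-cover}, condition (1) via Lemma \ref{lem-Ext-condition}, and condition (2) via the K\"unneth-type formula of Lemma \ref{lem-Ext} combined with the global-dimension bounds to kill the terms with $i>n$ or $j>m$. Your explicit case analysis for condition (2) is just a reorganization of the paper's observation that $\Ext^{i}_{\Lambda}(S_{\Lambda},S_{\Lambda})=0$ and $\Ext^{j}_{\Gamma}(S_{\Gamma},S_{\Gamma})=0$ for all $i,j\geq 1$, so that every summand with $i+j=q\geq 1$ vanishes.
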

\begin{proof}
Because $P_{\Lambda} \in \mod\Lambda$, $P_{\Gamma} \in \mod\Gamma$ are the projective cover of the simple modules $S_{\Lambda}$, $S_{\Gamma}$ respectively,
by Corollary \ref{coro-tensor-product-of-simple} and  Proposition \ref{prop-tensor-product-of-proj.-cover}, $P_{\Lambda} \otimes P_{\Gamma}$ is the projective cover of the simple $(\Lambda \otimes \Gamma)$-module $S_{\Lambda}\otimes S_{\Gamma}$.

By Definition \ref{def-n-BB-tilting-module}(1),
we get that $\Ext^{i}_{\Lambda}(D \Lambda, S_{\Lambda}) = 0$ for any $0 \leq i < n$ and $\Ext^{j}_{\Gamma}(D \Gamma, S_{\Gamma}) = 0$ for any $0 \leq j < m$.
Hence by Lemma \ref{lem-Ext-condition}, we have $\Ext^{q}_{\Lambda \otimes\Gamma}(D (\Lambda \otimes\Gamma), S_{\Lambda}\otimes S_{\Gamma}) = 0$ for any $0 \leq q < n+m$.
It is suffices to show $\Ext^{i}_{\Lambda \otimes\Gamma}(S_{\Lambda}\otimes S_{\Gamma}, S_{\Lambda}\otimes S_{\Gamma}) = 0$ for any $1 \leq i \leq n+m$.

By Definition \ref{def-n-BB-tilting-module}(2),
$\Ext^{i}_{\Lambda}(S_{\Lambda}, S_{\Lambda}) = 0$ for any $1 \leq i \leq n$ and $\Ext^{j}_{\Lambda}(S_{\Gamma}, S_{\Gamma}) = 0$ for any $1 \leq j \leq m$.
Under the condition ${\rm gl.dim} \Lambda \leq n$ and ${\rm gl.dim} \Gamma \leq m$, it is follows that $\Ext^{i}_{\Lambda}(S_{\Lambda}, S_{\Lambda}) = 0$ and $\Ext^{i}_{\Lambda}(S_{\Gamma}, S_{\Gamma}) = 0$ for any $1 \leq i$.
This implies $\Ext^{i}(S_{\Lambda}, S_{\Lambda}) \otimes \Ext^{j}(S_{\Gamma}, S_{\Gamma}) =0$ for $i>0$ or $j>0$. Thus
$
      \Ext^{q}(S_{\Lambda}\otimes S_{\Gamma}, S_{\Lambda}\otimes S_{\Gamma})
     = \bigoplus\limits_{i+j=q} \Ext^{i}(S_{\Lambda}, S_{\Lambda}) \otimes \Ext^{j}(S_{\Gamma}, S_{\Gamma}) =0
$
for $1 \leq q \leq n+m$. The proof is done.
\end{proof}

On above Theorem, in the setting of global dimensions ${\rm gl.dim} \Lambda \leq n$ and ${\rm gl.dim} \Gamma \leq m$, by Proposition \ref{prop-global-dim.-of-tensor}, $\tau_{n+m}^{-}(S_{\Lambda}\otimes S_{\Gamma})=\tau_{n}^{-}S_{\Lambda} \otimes \tau_{m}^{-}S_{\Gamma}$.
Moreover, the condition ${\rm gl.dim} \Lambda \leq n$ and ${\rm gl.dim} \Gamma \leq m$ is not necessary.
In fact, it is enough to assume that $\Ext^{i}_{\Lambda}(S_{\Lambda}, S_{\Lambda}) = 0$ and $\Ext^{i}_{\Lambda}(S_{\Gamma}, S_{\Gamma}) = 0$ for any $1 \leq i \leq n+m$,
note that this assumption is automatic if we consider the higher BB tilting modules associated with simple projective modules $S_{\Lambda}$ and $S_{\Gamma}$ which is just the weak higher APR tilting modules associated with $S_{\Lambda}$ and $S_{\Gamma}$.
Now in general we construct higher APR tilting modules by tensor products.

\begin{theorem}\label{the-tensor-products-of-n-APR-tilting-modules}
Suppose that  $\Lambda,\Gamma$ are basic finite dimensional algebras  over algebraically closed field $K$ and $n,m \geq 1$.
Let $P_{\Lambda}, P_{\Gamma}$ be simple projective $\Lambda$-respectively, $\Gamma$-modules.
Let $T_{\Lambda \otimes \Gamma} = (\tau_{n+m}^{-}(P_{\Lambda} \otimes P_{\Gamma})) \oplus ((\Lambda \otimes \Gamma) / (P_{\Lambda} \otimes P_{\Gamma}))$.
If
$$T_{\Lambda} = (\tau_{n}^{-}P_{\Lambda}) \oplus ({\Lambda} / P_{\Lambda}), ~T_{\Gamma} = (\tau_{m}^{-}P_{\Gamma}) \oplus ({\Gamma} / P_{\Gamma})$$
are weak $n$-respectively $m$-APR tilting modules, then
\begin{enumerate}
\item $T_{\Lambda \otimes \Gamma}$ is a weak $(n+m)$-APR tilting module associated with $P_{\Lambda} \otimes P_{\Gamma}$.

\item If moreover $\id P_{\Lambda} = n$ and $\id P_{\Gamma} = m$, then $T_{\Lambda \otimes \Gamma}$ is an $(n+m)$-APR tilting module.

\item If global dimensions ${\rm gl.dim} \Lambda = n$ and ${\rm gl.dim} \Gamma = m$, then the global dimension ${\rm gl.dim} \Omega =n+m$ where the $(n+m)$-APR tilt algebra $\Omega=\End_{\Lambda \otimes \Gamma}(T_{\Lambda \otimes \Gamma})^{op}$.
\end{enumerate}
\end{theorem}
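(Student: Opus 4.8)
\emph{Proof proposal.}
The plan is, for each part, to verify the conditions of Definition~\ref{def-n-APR-tilting-module} by feeding the hypotheses through the preparation results of Section~\ref{Prep}. For~(1): since $\Lambda,\Gamma$ are basic, so is $\Lambda\otimes\Gamma$ by Proposition~\ref{prop-tensor-of-basic-algebras}, and since $P_{\Lambda},P_{\Gamma}$ are simple projective, $P_{\Lambda}\otimes P_{\Gamma}$ is a simple projective $(\Lambda\otimes\Gamma)$-module by Corollary~\ref{coro-simple-proj.-and-inj.-over-tensor-products}(1). Writing $\Lambda=P_{\Lambda}\oplus(\Lambda/P_{\Lambda})$, $\Gamma=P_{\Gamma}\oplus(\Gamma/P_{\Gamma})$ and distributing the tensor product, the complement of $P_{\Lambda}\otimes P_{\Gamma}$ in $\Lambda\otimes\Gamma$ is exactly $(\Lambda\otimes\Gamma)/(P_{\Lambda}\otimes P_{\Gamma})$, so $T_{\Lambda\otimes\Gamma}$ has the shape demanded by Definition~\ref{def-n-APR-tilting-module}. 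It remains to check that $\Ext^{q}_{\Lambda\otimes\Gamma}(D(\Lambda\otimes\Gamma),P_{\Lambda}\otimes P_{\Gamma})=0$ for $0\le q<n+m$; as $T_{\Lambda}$ and $T_{\Gamma}$ are weak APR tilting modules we have $\Ext^{i}_{\Lambda}(D\Lambda,P_{\Lambda})=0$ for $0\le i<n$ and $\Ext^{j}_{\Gamma}(D\Gamma,P_{\Gamma})=0$ for $0\le j<m$, so this is immediate from Lemma~\ref{lem-Ext-condition}(1), proving~(1). For~(2) one adds only the equality $\id_{\Lambda\otimes\Gamma}(P_{\Lambda}\otimes P_{\Gamma})=n+m$, which is Lemma~\ref{lem-proj.-and-inj.-dimension-over-tensor-product}(1) applied to $P_{\Lambda}$ and $P_{\Gamma}$.

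For~(3), I would first observe that $T_{\Lambda}$ is in fact a genuine $n$-APR tilting module. Indeed $P_{\Lambda}$ is non-injective (since $\Hom_{\Lambda}(D\Lambda,P_{\Lambda})=0$), so $k:=\id_{\Lambda}P_{\Lambda}\ge 1$, and $k\le n$ because ${\rm gl.dim}\,\Lambda=n$; in a minimal injective coresolution $0\to P_{\Lambda}\to I^{0}\to\cdots\to I^{k-1}\xrightarrow{\,d\,}I^{k}\to 0$ the kernel of the surjection $d$ is an essential submodule of $I^{k-1}$, hence not a direct summand, so $d$ is not split, whence $\Ext^{k}_{\Lambda}(I^{k},P_{\Lambda})\ne 0$ and therefore $\Ext^{k}_{\Lambda}(D\Lambda,P_{\Lambda})\ne 0$; together with $\Ext^{i}_{\Lambda}(D\Lambda,P_{\Lambda})=0$ for $i<n$ this forces $k=n$. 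Similarly $\id_{\Gamma}P_{\Gamma}=m$, so by~(2) $T_{\Lambda\otimes\Gamma}$ is a genuine $(n+m)$-APR tilting module. Since $K$ is algebraically closed, $(\Lambda/\rad\Lambda)\otimes(\Gamma/\rad\Gamma)$ is semisimple by Corollary~\ref{coro-semisimple-and-radical}, so ${\rm gl.dim}\,(\Lambda\otimes\Gamma)={\rm gl.dim}\,\Lambda+{\rm gl.dim}\,\Gamma=n+m$ by Lemma~\ref{lem-global-dimension-of-ten.-product}. Finally I would invoke the fact from higher Auslander--Reiten theory (\cite{IO12,HIO14}) that a $d$-APR tilt of an algebra of global dimension $d$ again has global dimension $d$ (the case $d=1$ being the classical statement that an APR tilt of a hereditary algebra is hereditary), applied with $d=n+m$, to conclude ${\rm gl.dim}\,\Omega=n+m$.

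The bookkeeping in~(1) and the appeals to the Section~\ref{Prep} lemmas in~(1)--(2) are routine, and the upgrade in~(3) is a short homological argument. I expect the main obstacle to be the final step of~(3): pinning down, in exactly the form needed, that forming an $(n+m)$-APR tilt does not raise the global dimension when the base algebra already has global dimension $n+m$. This is stronger than the general tilting bound ${\rm gl.dim}\,\End_{\Lambda}(T)^{op}\le{\rm gl.dim}\,\Lambda+\pd_{\Lambda}T$ (which here only gives $\le 2(n+m)$) and genuinely uses the special, essentially one-vertex, nature of APR tilting; one clean route is to combine the inequality ${\rm gl.dim}\,\Omega\le n+m$ with the reverse inequality obtained from the $(n+m)$-APR cotilt that recovers $\Lambda\otimes\Gamma$ from $\Omega$.
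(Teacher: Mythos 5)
Your proposal matches the paper's proof: parts (1) and (2) are argued exactly as in the paper via Corollary \ref{coro-simple-proj.-and-inj.-over-tensor-products}, Lemma \ref{lem-Ext-condition} and Lemma \ref{lem-proj.-and-inj.-dimension-over-tensor-product}, and for (3) the paper likewise computes ${\rm gl.dim}\,(\Lambda\otimes\Gamma)=n+m$ from Lemma \ref{lem-global-dimension-of-ten.-product} and then simply cites \cite[Proposition 3.6]{IO12} for exactly the fact you flag as the main obstacle (that the $(n+m)$-APR tilt of an algebra of global dimension $n+m$ again has global dimension $n+m$). Your additional argument in (3) that $\id_{\Lambda}P_{\Lambda}=n$ and $\id_{\Gamma}P_{\Gamma}=m$ is correct but does not appear in the paper, whose proof of (3) relies only on part (1) and that citation.
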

\begin{proof}
(1)Because $P_{\Lambda}, P_{\Gamma}$ are simple projective modules, so $P_{\Lambda} \otimes P_{\Gamma}$ is a simple projective module by Corollary \ref{coro-simple-proj.-and-inj.-over-tensor-products}. Since $T_{\Lambda}$, $T_{\Gamma}$ are weak $n$-respectively $m$-APR tilting modules, we get that $\Ext^{i}_{\Lambda}(D \Lambda, P_{\Lambda}) = 0$ for any $0 \leq i < n$ and $\Ext^{j}_{\Gamma}(D \Gamma, P_{\Gamma}) = 0$ for any $0 \leq j < m$. It is follows from Lemma \ref{lem-Ext-condition} that
$\Ext^{q}_{\Lambda \otimes\Gamma}(D (\Lambda \otimes\Gamma), P_{\Lambda} \otimes P_{\Gamma}) = 0$ for any $0 \leq q < n+m$. Consequently, the assertion follows.

(2)By Lemma \ref{lem-proj.-and-inj.-dimension-over-tensor-product}, we get $\id_{\Lambda \otimes\Gamma} (P_{\Lambda} \otimes P_{\Gamma}) = n +m$ by assumption. The rest is directly obtained from (1).

(3)By Lemma \ref{lem-global-dimension-of-ten.-product}, the global dimension ${\rm gl.dim} (\Lambda \otimes \Gamma) =n+m$. It is follows from (1) and \cite[Proposition 3.6] {IO12} that the global dimension ${\rm gl.dim} \Omega =n+m$ where the algebra $\Omega=\End_{\Lambda \otimes \Gamma}(T_{\Lambda \otimes \Gamma})^{op}$.
\end{proof}

Theorem \ref{the-tensor-products-of-n-APR-tilting-modules} proves that the $(n+m)$-APR tilting module over tensor products must exist if there exist $n$-respectively, $m$-APR tilting module over original algebras.
Moreover, Theorem \ref{prop-n-BB-and-finite-global-dimensions} and Theorem \ref{the-tensor-products-of-n-APR-tilting-modules} is also the construction of tilting modules with $\pd_{\Lambda} T \leq n+m$.
The following result related to  higher APR cotilting module is dual to Theorem \ref{the-tensor-products-of-n-APR-tilting-modules}.

\begin{theorem}\label{the-tensor-products-of-n-APR-cotilting-modules}
Suppose that  $\Lambda,\Gamma$ are basic finite dimensional algebras  over algebraically closed field $K$ and $n,m \geq 1$.
Let $I_{\Lambda}, I_{\Gamma}$ are simple injective $\Lambda$-respectively, $\Gamma$-modules.
Let $T_{\Lambda \otimes \Gamma} = (\tau_{n+m}(I_{\Lambda} \otimes I_{\Gamma})) \oplus ((\Lambda \otimes \Gamma)/(I_{\Lambda} \otimes I_{\Gamma}))$.
If
$$T_{\Lambda} = (\tau_{n}I_{\Lambda}) \oplus (D\Lambda/I_{\Lambda}), ~T_{\Gamma} = (\tau_{m}I_{\Gamma}) \oplus (D\Gamma/I_{\Gamma})$$
are weak $n$-respectively $m$-$APR$ cotilting modules, then
\begin{enumerate}
\item $T_{\Lambda \otimes \Gamma}$ is a weak $(n+m)$-$APR$ cotilting module associated with $I_{\Lambda} \otimes I_{\Gamma}$.

\item If moreover $\pd I_{\Lambda} = n$ and $\pd I_{\Gamma} = m$, then $T_{\Lambda \otimes \Gamma}$ is an $(n+m)$-$APR$ cotilting module.
\end{enumerate}
\end{theorem}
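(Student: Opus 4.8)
The plan is to run the argument of Theorem~\ref{the-tensor-products-of-n-APR-tilting-modules} with ``projective'' and ``injective'', $\tau_n^-$ and $\tau_n$, and ``tilting'' and ``cotilting'' interchanged. Unwinding ``dually we define'' in Definition~\ref{def-n-APR-tilting-module}: a weak $n$-APR cotilting $\Lambda$-module associated with a simple injective $\Lambda$-module $I$ is a module of the form $\tau_n I\oplus Q$, where $I$ is the indecomposable direct summand of the injective cogenerator $D\Lambda$ split off, $Q=D\Lambda/I$ is its complement, and $\Ext^i_\Lambda(I,\Lambda)=0$ for all $0\le i<n$; it is moreover an $n$-APR cotilting module when $\pd_\Lambda I=n$. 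So the whole statement reduces to verifying, for $I_\Lambda\otimes I_\Gamma$ over $\Lambda\otimes\Gamma$, that (i) $\Lambda\otimes\Gamma$ is a basic finite dimensional algebra, (ii) $I_\Lambda\otimes I_\Gamma$ is simple injective, (iii) $\Ext^q_{\Lambda\otimes\Gamma}(I_\Lambda\otimes I_\Gamma,\Lambda\otimes\Gamma)=0$ for $0\le q<n+m$, and, for part~(2), (iv) $\pd_{\Lambda\otimes\Gamma}(I_\Lambda\otimes I_\Gamma)=n+m$.

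First I would note (i) by Proposition~\ref{prop-tensor-of-basic-algebras}, so that the dual Definition~\ref{def-n-APR-tilting-module} is available. Since $T_\Lambda$ and $T_\Gamma$ are weak $n$- respectively $m$-APR cotilting modules, $I_\Lambda$ and $I_\Gamma$ are simple injective and $\Ext^i_\Lambda(I_\Lambda,\Lambda)=0$ for $0\le i<n$, $\Ext^j_\Gamma(I_\Gamma,\Gamma)=0$ for $0\le j<m$. For (ii), Corollary~\ref{coro-simple-proj.-and-inj.-over-tensor-products}(2) gives that $I_\Lambda\otimes I_\Gamma$ is a simple injective $(\Lambda\otimes\Gamma)$-module, hence an indecomposable summand of $D(\Lambda\otimes\Gamma)$, so the complement $D(\Lambda\otimes\Gamma)/(I_\Lambda\otimes I_\Gamma)$ is defined and $T_{\Lambda\otimes\Gamma}$ has the required shape $\tau_{n+m}(I_\Lambda\otimes I_\Gamma)\oplus\bigl(D(\Lambda\otimes\Gamma)/(I_\Lambda\otimes I_\Gamma)\bigr)$. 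For (iii), Lemma~\ref{lem-Ext-condition}(2) applied with $M=I_\Lambda$ and $N=I_\Gamma$ yields $\Ext^q_{\Lambda\otimes\Gamma}(I_\Lambda\otimes I_\Gamma,\Lambda\otimes\Gamma)=0$ for every $0\le q<n+m$. These facts are exactly the dual conditions of Definition~\ref{def-n-APR-tilting-module}, so $T_{\Lambda\otimes\Gamma}$ is a weak $(n+m)$-APR cotilting module associated with $I_\Lambda\otimes I_\Gamma$, proving~(1); and under the extra hypotheses of~(2), Lemma~\ref{lem-proj.-and-inj.-dimension-over-tensor-product}(2) gives (iv) $\pd_{\Lambda\otimes\Gamma}(I_\Lambda\otimes I_\Gamma)=n+m$, which upgrades it to an $(n+m)$-APR cotilting module.

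The only real point that needs care — and hence the main obstacle — is to justify that the verifications above are literally the dual of Definition~\ref{def-n-APR-tilting-module}. The clean way to see this is to transport along the standard $K$-duality $D$: one has $(\Lambda\otimes\Gamma)^{op}\cong\Lambda^{op}\otimes\Gamma^{op}$, $D(X\otimes_K Y)\cong DX\otimes_K DY$ as modules over it, $D\circ\tau_n\cong\tau_n^-\circ D$ (over the respective algebras), and $D$ exchanges simple injectives with simple projectives and injective dimension with projective dimension; consequently $DT_\Lambda$, $DT_\Gamma$ are weak $n$- respectively $m$-APR tilting modules over $\Lambda^{op}$, $\Gamma^{op}$ (again basic over the algebraically closed field $K$), and $DT_{\Lambda\otimes\Gamma}$ is precisely the module $T_{\Lambda^{op}\otimes\Gamma^{op}}$ of Theorem~\ref{the-tensor-products-of-n-APR-tilting-modules} for the simple projective modules $DI_\Lambda$, $DI_\Gamma$. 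Applying Theorem~\ref{the-tensor-products-of-n-APR-tilting-modules}(1) and (2) and then $D$ once more recovers the claim. I would write up the direct computation of the previous paragraph as the actual proof, since all the estimates it uses are already established and, unlike part~(3) of Theorem~\ref{the-tensor-products-of-n-APR-tilting-modules}, it needs no hypothesis on global dimension; the $D$-transport is invoked only to certify the dual form of the definition.
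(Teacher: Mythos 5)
Your proposal is correct and matches the paper's intent exactly: the paper states this theorem without proof, remarking only that it is dual to Theorem~\ref{the-tensor-products-of-n-APR-tilting-modules}, and your write-up is precisely that dualization, using the dual halves of the same lemmas (Corollary~\ref{coro-simple-proj.-and-inj.-over-tensor-products}(2), Lemma~\ref{lem-Ext-condition}(2), Lemma~\ref{lem-proj.-and-inj.-dimension-over-tensor-product}(2)). Your observation that the complement summand should read $D(\Lambda\otimes\Gamma)/(I_\Lambda\otimes I_\Gamma)$ rather than $(\Lambda\otimes\Gamma)/(I_\Lambda\otimes I_\Gamma)$ correctly identifies a typo in the paper's statement.
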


%%%%%

\subsection{Description-of-higher-APR-tilting-modules}
$n$-hereditary algebras as the generalization of hereditary algebras were introduced in higher representation theory. The following result is a characterization of $n$-hereditary algebras.

\begin{proposition}\label{prop-$n$-hereditary-algebra}\cite[Theorem 3.4] {HIO14}
Let $\Lambda$ be a ring-indecomposable finite dimensional algebra. Then $\Lambda$ is an $n$-hereditary algebra if and only if it is either $n$-representation finite or $n$-representation infinite.
\end{proposition}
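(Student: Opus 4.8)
Although Proposition~\ref{prop-$n$-hereditary-algebra} is quoted from \cite[Theorem~3.4]{HIO14}, we indicate the strategy of proof. Recall that, for $\gl\Lambda\le n$, being \emph{$n$-hereditary} is the requirement that the cohomology of every term of the two-sided orbit $\{\nu_n^{i}(\Lambda)\}_{i\in\mathbb Z}$ in $D^b(\mod\Lambda)$ be concentrated in degrees divisible by $n$ (here $\nu_n=\nu[-n]$, $\nu$ the derived Nakayama functor); being \emph{$n$-representation infinite} means moreover $\nu_n^{-i}(\Lambda)\in\mod\Lambda$ for all $i\ge0$, and being \emph{$n$-representation finite} means $\Lambda$ admits an $n$-cluster tilting module. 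One implication is formal and needs no ring-indecomposability: if $\Lambda$ is $n$-representation infinite then $\nu_n^{-i}(\Lambda)$ is a module for $i\ge0$ by hypothesis, while for $i\ge1$ one has $\nu_n^{i}(\Lambda)=\nu_n^{i-1}(D\Lambda)[-n]$ and $\nu_n^{i-1}(D\Lambda)\in\mod\Lambda$ by the $\Lambda$--$\Lambda^{op}$ symmetry of $n$-representation infiniteness (the analogue of Lemma~\ref{lem-tau{n}-finite}); and if $\Lambda$ is $n$-representation finite then the basic $n$-cluster tilting module $M=\bigoplus_{i\ge0}\tau_n^{-i}(\Lambda)$ is permuted by $\nu_n$ except for its projective summands, which are sent to injective modules shifted by $n$. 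In both cases every $\nu_n^{i}(\Lambda)$ is a module up to a shift by a multiple of $n$, so $\Lambda$ is $n$-hereditary. The content of the proposition is therefore the converse, and the plan is to prove it assuming $\Lambda$ ring-indecomposable.

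The first step is the homological heart of the matter: the $n$-hereditary hypothesis forces $\nu_n^{-i}(\Lambda)\in\mod\Lambda$ for all $i\ge0$ \emph{unless} an indecomposable injective summand appears along the way. I would argue by induction on $i$. Writing $\nu_n^{-i}(\Lambda)=M'\oplus I$ with $I$ the injective part and $M'$ having no injective summand, one gets $\nu_n^{-(i+1)}(\Lambda)=\nu_n^{-1}(M')\oplus\nu_n^{-1}(I)$; the point is that $\nu_n^{-1}$ restricted to modules without injective summands coincides with the module-theoretic translate $\tau_n^{-}$, so $\nu_n^{-1}(M')=\tau_n^{-}(M')\in\mod\Lambda$ by higher Auslander--Reiten theory \cite{I07-1,I08-1,I11}, whereas $\nu_n^{-1}(I)$ is a projective module placed in cohomological degree $-n$ and so leaves $\mod\Lambda$; the $n$-hereditary degree condition is exactly what guarantees that no intermediate cohomology is created. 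One also checks, using that $\nu_n$ is an autoequivalence and that $\nu_n(\Lambda)=D\Lambda[-n]$ is not a module, that an orbit $\{\nu_n^{-i}(\Lambda)\}_{i\ge0}$ lying entirely in $\mod\Lambda$ cannot be finite. So one is left with exactly two possibilities: either no injective summand ever occurs, in which case $\{\nu_n^{-i}(\Lambda)\}_{i\ge0}$ is an infinite family of modules, or some $\nu_n^{-i_0}(\Lambda)$ has an indecomposable injective summand.

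In the first possibility $\Lambda$ is $n$-representation infinite by definition and we are done. In the second, the plan is to prove $\Lambda$ is $n$-representation finite by exhibiting an $n$-cluster tilting subcategory: one takes the additive subcategory $\mathcal M$ generated by the modules $\nu_n^{-i}(P)$, with $P$ ranging over the indecomposable projectives and $i$ ranging from $0$ up to the first step at which the $\nu_n^{-}$-orbit of $P$ meets an injective, and verifies that $\mathcal M$ is closed under $\nu_n$ up to this boundary, contains $\Lambda$ and $D\Lambda$, and has the $\Ext$-vanishing characterising $n$-cluster tilting --- the last using the $n$-hereditary hypothesis together with the theory of $n$-almost split sequences. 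This is precisely where \emph{ring-indecomposability} is essential: a priori the $\nu_n^{-}$-orbits of distinct indecomposable projectives could behave differently, some finite and some infinite, which would force $\Lambda$ to decompose as a product of an $n$-representation finite and an $n$-representation infinite block; connectedness of $\Lambda$ propagates the finite behaviour detected at $P_0$ along the $\Ext$-quiver to all projectives, so that $\mathcal M$ is genuinely of finite type and exhausts all indecomposable projectives and injectives.

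I expect the main obstacle to be two-fold. First, the cohomological bookkeeping in the inductive step --- controlling the shifts produced by $\nu_n^{-1}$ and matching the $n$-hereditary degree condition with the behaviour of $\tau_n^{-}$ --- genuinely requires the machinery of $n$-almost split sequences and $n$-cluster tilting rather than any elementary manipulation. Second, the verification in the finite case that the candidate $\mathcal M$ is actually an $n$-cluster tilting subcategory, and in particular contains every indecomposable injective, is the step where ring-indecomposability enters in an essential and not merely formal way; this gluing argument is the real subtlety behind \cite[Theorem~3.4]{HIO14}.
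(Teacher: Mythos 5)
The paper does not prove this proposition: it is imported verbatim from \cite[Theorem 3.4]{HIO14} and used as a black box in Corollary \ref{coro-APR-over-tensor-products-of-higher-hereditary}, so there is no in-paper argument to compare yours against. Judged on its own terms, your outline does reproduce the broad strategy of the source --- the easy implication via inspection of the $\nu_n$-orbit of $\Lambda$, and for the converse a dichotomy according to whether some $\nu_n^{-i}(\Lambda)$ acquires an indecomposable injective summand, with ring-indecomposability used to make that dichotomy all-or-nothing across the indecomposable projectives.

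As a proof, however, the proposal has genuine gaps, which you yourself flag. The two steps you defer are not technical polish but the entire content of the theorem: (i) the inductive claim that, for an $n$-hereditary algebra, $\nu_n^{-1}$ agrees with $\tau_n^{-}$ on the non-injective part of $\nu_n^{-i}(\Lambda)$ and creates no cohomology outside degree $0$ --- the $n$-hereditary condition only constrains the cohomology of $\nu_n^{-i}(\Lambda)$ to degrees in $n\mathbb{Z}$, so one must still rule out a summand contributing in degree $-n$ mixing with the degree-$0$ part, which is precisely where a naive induction could fail; and (ii) the verification that in the ``finite'' branch your candidate subcategory $\mathcal M$ is genuinely $n$-cluster tilting and that connectedness forces the orbit of \emph{every} indecomposable projective to terminate at an injective. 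A smaller imprecision: the remark that an orbit lying entirely in $\mod\Lambda$ ``cannot be finite'' is beside the point, since $\nu_n^{-i}(\Lambda)\in\mod\Lambda$ for all $i\ge 0$ is already the definition of $n$-representation infiniteness, so no such auxiliary check is needed in that branch. In short, the skeleton is right and consistent with \cite{HIO14}, but what you have written is a plan rather than a proof; since the paper itself only cites the result, the honest course is to do the same rather than to present this sketch as an argument.
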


Under certain conditions, tensor products preserves $n$-representation finiteness \cite{IO12} and $n$-representation infiniteness \cite{HIO14,MY16}.
Then it is natural to ask whether the tensor product $\Lambda \otimes \Gamma$ of $n$-hereditary algebra $\Lambda$ with $m$-hereditary algebra $\Gamma$ is $(n+m)$-hereditary.
Since tensor product of basic ring-indecomposable algebras is ring-indecomposable, Proposition \ref{prop-tensor-product-333} means the fact that tensor products does not preserve the property of $n$-hereditary in general.

Now we discuss the higher APR tilting modules over the tensor products of higher hereditary algebras and give the following description.

\begin{corollary}\label{coro-APR-over-tensor-products-of-higher-hereditary}
Suppose that $\Lambda,\Gamma$ are basic ring-indecomposable $n$-respectively $m$-hereditary algebras over algebraically closed field $K$ with positive integers $n,m \geq 1$.
Let $P_{\Lambda}, P_{\Gamma}$ be indecomposable projective and non-injective $\Lambda$-respectively, $\Gamma$-modules.
Let $T_{\Lambda} = (\tau_{n}^{-}P_{\Lambda}) \oplus ({\Lambda} / P_{\Lambda})$, $T_{\Gamma} = (\tau_{m}^{-}P_{\Gamma}) \oplus ({\Gamma} / P_{\Gamma})$ and $T_{\Lambda \otimes \Gamma} = (\tau_{n}^{-}P_{\Lambda} \otimes \tau_{m}^{-}P_{\Gamma}) \oplus ((\Lambda \otimes \Gamma) / (P_{\Lambda} \otimes P_{\Gamma}))$.
\begin{enumerate}
\item $T_{\Lambda}$ is an $n$-APR tilting $\Lambda$-module if and only if $P_{\Lambda}$ is a simple projective and non-injective $\Lambda$-module.

\item $T_{\Lambda}, T_{\Gamma}$ are $n$-respectively $m$-APR tilting modules if and only if $T_{\Lambda \otimes \Gamma}$ is an $(n+m)$-APR tilting $(\Lambda \otimes \Gamma)$-module.

\item If $\Lambda,\Gamma$ are non-semisimple, then
$$\lvert {\rm APR}_{\Lambda \otimes \Gamma} \rvert =\lvert {\rm APR}_{\Lambda} \rvert \lvert {\rm APR}_{\Gamma} \rvert,$$
here $\lvert {\rm APR}_{\Lambda} \rvert$, $\lvert {\rm APR}_{\Gamma} \rvert$, $\lvert {\rm APR}_{\Lambda \otimes \Gamma} \rvert$ are the numbers of the $n$-, $m$-, $(n+m)$-APR tilting $\Lambda$-, $\Gamma$-, $(\Lambda \otimes \Gamma)$-module which are obtained by different simple projective modules, respectively.

\item If $\Lambda,\Gamma$ are $l$-homogeneous $n$-respectively, $m$-representation finite for common integer $l$ and $P_{\Lambda}, P_{\Gamma}$ are simple projective and non-injective modules, then the $(n+m)$-APR tilt $\End_{\Lambda \otimes \Gamma}(T_{\Lambda \otimes \Gamma})^{op}$  is $(n+m)$-representation finite.

\item If $\Lambda,\Gamma$ are $n$-respectively $m$-representation infinite and $P_{\Lambda}, P_{\Gamma}$ are simple projective modules, then the $(n+m)$-APR tilt $\End_{\Lambda \otimes \Gamma}(T_{\Lambda \otimes \Gamma})^{op}$ is $(n+m)$-representation-infinite.
\end{enumerate}
\end{corollary}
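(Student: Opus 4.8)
The plan is to deduce all five parts from Theorem~\ref{the-tensor-products-of-n-APR-tilting-modules}, the description of simple, projective and injective $(\Lambda\otimes\Gamma)$-modules collected in Section~\ref{Prep}, and the existence theorem for higher APR tilting modules over $n$-hereditary algebras from \cite{IO12,HIO14,MY16}.

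\emph{Part (1).} By Proposition~\ref{prop-$n$-hereditary-algebra} a ring-indecomposable $n$-hereditary algebra is either $n$-representation finite or $n$-representation infinite, and in both cases \cite{IO12,HIO14,MY16} asserts that every simple projective non-injective module $P$ satisfies $\Ext^{i}_{\Lambda}(D\Lambda,P)=0$ for $0\le i<n$ and $\id_{\Lambda}P=n$; hence, if $P_{\Lambda}$ is simple projective non-injective, then $T_{\Lambda}$ is an $n$-APR tilting module by Definition~\ref{def-n-APR-tilting-module}. Conversely, if $T_{\Lambda}$ is an $n$-APR tilting module, then by Definition~\ref{def-n-APR-tilting-module} and the remark following it $P_{\Lambda}$ is simple projective and non-injective. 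I would also record here the observation used below: over a ring-indecomposable non-semisimple algebra every simple projective module is non-injective, since a simple projective-injective module $\Lambda e$ has $\dim_{K}\Lambda e=\dim_{K}e\Lambda=1$ (because $\Lambda e\cong D(e\Lambda)$), which forces $(1-e)\Lambda e=e\Lambda(1-e)=0$, so $\Lambda\cong Ke\times(1-e)\Lambda(1-e)$, contradicting ring-indecomposability unless $\Lambda$ is semisimple. Consequently, for the algebras of the corollary, the $n$-APR tilting modules of the form $(\tau_{n}^{-}P)\oplus(\Lambda/P)$ are exactly those with $P$ ranging over the pairwise non-isomorphic simple projective $\Lambda$-modules.

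\emph{Parts (2) and (3).} If $T_{\Lambda},T_{\Gamma}$ are $n$-, resp.\ $m$-APR tilting modules, then by Part~(1) $P_{\Lambda},P_{\Gamma}$ are simple projective non-injective with $\id_{\Lambda}P_{\Lambda}=n$ and $\id_{\Gamma}P_{\Gamma}=m$; in particular they are weak $n$-, resp.\ $m$-APR tilting, so Theorem~\ref{the-tensor-products-of-n-APR-tilting-modules}(2) shows that $(\tau_{n+m}^{-}(P_{\Lambda}\otimes P_{\Gamma}))\oplus((\Lambda\otimes\Gamma)/(P_{\Lambda}\otimes P_{\Gamma}))$ is an $(n+m)$-APR tilting module, and since $\gl\Lambda\le n$ and $\gl\Gamma\le m$, Proposition~\ref{prop-global-dim.-of-tensor}(2) identifies this module with $T_{\Lambda\otimes\Gamma}$. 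For the converse, if $T_{\Lambda\otimes\Gamma}$ is an $(n+m)$-APR tilting module, then $P_{\Lambda}\otimes P_{\Gamma}$ is simple projective, hence $P_{\Lambda},P_{\Gamma}$ are simple projective by Corollary~\ref{coro-simple-proj.-and-inj.-over-tensor-products}(1); they are non-injective by hypothesis, so Part~(1) yields that $T_{\Lambda},T_{\Gamma}$ are $n$-, resp.\ $m$-APR tilting. For (3): by Part~(1), $\lvert\mathrm{APR}_{\Lambda}\rvert$ and $\lvert\mathrm{APR}_{\Gamma}\rvert$ count the simple projective $\Lambda$-, resp.\ $\Gamma$-modules, while Proposition~\ref{prop-tensor-product-of-primitive-idempotents} and Corollary~\ref{coro-simple-proj.-and-inj.-over-tensor-products}(1) show that the simple projective $(\Lambda\otimes\Gamma)$-modules are precisely the pairwise non-isomorphic modules $P_{\Lambda}\otimes P_{\Gamma}$ with $P_{\Lambda},P_{\Gamma}$ simple projective; by (2) each of these produces an $(n+m)$-APR tilt, from which $P_{\Lambda}\otimes P_{\Gamma}$ is recovered as the unique indecomposable projective that is not a direct summand of the tilt, so the assignment is a bijection and $\lvert\mathrm{APR}_{\Lambda\otimes\Gamma}\rvert=\lvert\mathrm{APR}_{\Lambda}\rvert\,\lvert\mathrm{APR}_{\Gamma}\rvert$.

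\emph{Parts (4) and (5).} Here I would combine the above with the known permanence theorems. For (4): $l$-homogeneity of the $n$-, resp.\ $m$-representation finite algebras $\Lambda,\Gamma$ gives, by Herschend--Iyama \cite{HI11}, that $\Lambda\otimes\Gamma$ is $(n+m)$-representation finite; since $P_{\Lambda},P_{\Gamma}$ are simple projective non-injective, Parts~(1) and~(2) show $T_{\Lambda\otimes\Gamma}$ is an $(n+m)$-APR tilting module, and $(n+m)$-APR tilts preserve $(n+m)$-representation finiteness \cite{IO12}, so $\End_{\Lambda\otimes\Gamma}(T_{\Lambda\otimes\Gamma})^{op}$ is $(n+m)$-representation finite. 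For (5): $\Lambda\otimes\Gamma$ is $(n+m)$-representation infinite by \cite{HIO14,MY16}; a simple projective module over the ring-indecomposable non-semisimple algebras $\Lambda,\Gamma$ is automatically non-injective by the observation in Part~(1), so $T_{\Lambda\otimes\Gamma}$ is again $(n+m)$-APR tilting by Parts~(1) and~(2), and $(n+m)$-APR tilts preserve $(n+m)$-representation infiniteness \cite{HIO14,MY16}. I expect the genuinely delicate point not to lie in the homological input, which is packaged in Theorem~\ref{the-tensor-products-of-n-APR-tilting-modules} and Section~\ref{Prep}, but in the bookkeeping of Parts~(1) and~(3): one must check that ``non-injective'' is automatic under ring-indecomposability and non-semisimplicity, and that $P\mapsto(\tau_{n}^{-}P)\oplus(\Lambda/P)$ is injective on simple projectives, so that the counting identity is an equality rather than merely an inequality.
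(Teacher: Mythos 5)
Your proposal is correct and follows essentially the same route as the paper: part (1) via the $n$-representation finite/infinite dichotomy and the existence results of \cite{IO12,MY16}, part (2) via Theorem \ref{the-tensor-products-of-n-APR-tilting-modules} together with Proposition \ref{prop-global-dim.-of-tensor} and Corollary \ref{coro-simple-proj.-and-inj.-over-tensor-products}, and parts (4)--(5) via the permanence theorems of \cite{HI11,IO12,HIO14}. The only difference is that you supply explicit justifications (the idempotent argument showing simple projective implies non-injective under ring-indecomposability and non-semisimplicity, and the bijection underlying the count in (3)) that the paper asserts without proof.
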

\begin{proof}
(1)By Proposition \ref{prop-$n$-hereditary-algebra}, $\Lambda$ is either $n$-representation finite or $n$-representation infinite.
When $\Lambda$ is $n$-representation finite, by \cite[Observation 4.1] {IO12}, any simple projective and non-injective $\Lambda$-modules $P_{\Lambda}$ admits the $n$-APR tilting module associated with $P_{\Lambda}$.
When $\Lambda$ is $n$-representation infinite, by \cite[Section 2.2] {MY16}, any simple projective $\Lambda$-module $P_{\Lambda}$ gives an $n$-APR tilting $\Lambda$-module.
Hence by Definition \ref{def-n-APR-tilting-module} the assertion follows.

(2)Assume $T_{\Lambda}, T_{\Gamma}$ are $n$-respectively $m$-APR tilting modules, by Proposition \ref{prop-global-dim.-of-tensor} and Theorem \ref{the-tensor-products-of-n-APR-tilting-modules}, $T_{\Lambda \otimes \Gamma}$ is an $(n+m)$-APR tilting $(\Lambda \otimes \Gamma)$-module.
Conversely,
observed that by Proposition \ref{prop-tensor-product-of-primitive-idempotents} and Corollary \ref{coro-simple-proj.-and-inj.-over-tensor-products}, $P_{\Lambda} \otimes P_{\Gamma}$ is a simple projective and non-injective $(\Lambda \otimes \Gamma)$-module if and only if $P_{\Lambda}$, $P_{\Gamma}$ are simple projective and non-injective $\Lambda$-respectively, $\Gamma$-modules.
Thus if $T_{\Lambda \otimes \Gamma}$ is an $(n+m)$-APR tilting $(\Lambda \otimes \Gamma)$-module,
then $P_{\Lambda} \otimes P_{\Gamma}$ is a simple projective and non-injective $(\Lambda \otimes \Gamma)$-module, this implies by (1),
$T_{\Lambda}$ and $T_{\Gamma}$ are $n$-respectively $m$-APR tilting modules.

(3)When $\Lambda,\Gamma$ are basic ring-indecomposable and non-semisimple, any simple projective $\Lambda$-, $\Gamma$-modules are non-injective. The rest is obtained from (1) and (2).

(4) When $\Lambda,\Gamma$ are $l$-homogeneous $n$-respectively $m$-representation finite for common integer $l$, by \cite[Corollary 1.5] {HI11}, $\Lambda \otimes \Gamma$ is an $(n+m)$-representation finite algebra. Hence by (1),(2) and \cite[Corollary 4.3] {IO12}, $\End_{\Lambda \otimes \Gamma}(T_{\Lambda \otimes \Gamma})^{op}$  is $(n+m)$-representation finite.

(5) It is follows from \cite[Theorem 2.10] {HIO14} that $(\Lambda \otimes \Gamma)$ is $(n+m)$-representation infinite. By (1),(2) and \cite[Theorem 2.13] {HIO14}, $\End_{\Lambda \otimes \Gamma}(T_{\Lambda \otimes \Gamma})^{op}$ is $(n+m)$-representation infinite.
\end{proof}

Now we give an example to illustrate our results.
\begin{example}
Assume path algebra $\Lambda =KQ$ where the quiver
$$
Q:
\xymatrix@C=1.5cm@R1.5cm{
\stackrel{2}{\bullet} \ar@/^1pc/[rr]_{a_{0}}  \ar@/_1pc/[rr]^{a_{1}} & &\stackrel{1}{\bullet}
}
$$
This is a Beilinson algebra of dimension 1 and $1$-representation infinite algebra by \cite{HIO14}. We study the tensor product algebra $\Gamma=\Lambda \otimes \Lambda$ which is $2$-representation infinite and $\tau_{2}$-infinite.
Let $e_{i}$ is the trivial path corresponding to vertices $i \in \{1,2\}$, then $e_{i,j} =e_{i}\otimes e_{j}$, $i,j \in \{1,2\}$ is a complete set of primitive orthogonal idempotents of
the algebra $\Gamma$.
By \cite{Le94}, the algebra $\Gamma$ is defined by the quiver
$$
\xymatrix@C=1.5cm@R0.8cm{
   &{\stackrel{(2,2)}{\bullet}} \ar@{->}@<2pt>[rr]^{x_{4}= e_{2}\otimes a_{0}} \ar@{->}@<-2pt>[rr]_{y_{4}= e_{2}\otimes a_{1}}  \ar@{->}@<2pt>[d]^{x_{2}= a_{0}\otimes e_{2}} \ar@{->}@<-2pt>[d]_{y_{2}= a_{1}\otimes e_{2}}
   &      &{\stackrel{(2,1)}{\bullet}} \ar@{->}@<2pt>[d]^{x_{3}= a_{0}\otimes e_{1}} \ar@{->}@<-2pt>[d]_{y_{3}= a_{1}\otimes e_{1}}   \\
&{\stackrel{(1,2)}{\bullet}}  \ar@{->}@<2pt>[rr]^{x_{1}= e_{1}\otimes a_{0}} \ar@{->}@<-2pt>[rr]_{y_{1}= e_{1}\otimes a_{1}}
   &      &{\stackrel{(1,1)}{\bullet}}
}
$$
with relations $(a_{i}\otimes e_{1})(e_{2}\otimes a_{j})=(e_{1}\otimes a_{j})(a_{i}\otimes e_{2})$, $i,j \in \{0,1\}$.

Let $P_{i+2l}=\tau^{-l}_{1}(\Lambda e_{i})$ for vertices $i \in \{1,2\}$ and $l\geq 0$,  the quiver of the category $\add\{P_{j}| j\geq 0\}$ is the following
$$
\xymatrix@C=0.6cm@R0.5cm{
&{P_{1}} \ar@{->}@<2pt>[dr] \ar@{->}@<-2pt>[dr] \ar@{.>}[rr]    &      &{P_{3}} \ar@{->}@<2pt>[dr] \ar@{->}@<-2pt>[dr]  \ar@{.>}[rr]    &     &{P_{5}} \ar@{->}@<2pt>[dr] \ar@{->}@<-2pt>[dr]             \\
&  &{P_{2}} \ar@{->}@<2pt>[ur] \ar@{->}@<-2pt>[ur] \ar@{.>}[rr]         &  &{P_{4}} \ar@{->}@<2pt>[ur] \ar@{->}@<-2pt>[ur] \ar@{.>}[rr]       &  &{P_{6}}               &{\cdots}
}
$$
where dotted arrows indicate Auslander-Reiten translation $\tau_1^{-}$.

Observed that $P_{1}$ is the unique simple projective $\Lambda$-module and $P_{3}=\tau^{-}_{1}(P_{1})$.
By \cite[Section 2.2]{MY16}, $T_{\Lambda}=P_{3} \oplus P_{2}$ is an $1$-APR tilting $\Lambda$-module associated with $P_{1}$.
The $1$-APR tilt $\End_{\Lambda}(T_{\Lambda})^{op}$ is isomorphic to algebra $\Lambda$.
The projective module $P_{1}\otimes P_{1} =\Gamma e_{1,1}$ corresponding to the vertex $(1,1)$ is the unique simple projective $\Gamma$-module.
Let $Q_{\Gamma}=(\Lambda \otimes \Lambda) / (P_{1} \otimes P_{1})$. Therefore, by Theorem \ref{the-tensor-products-of-n-APR-tilting-modules}, $T_{\Gamma}=(P_{3}\otimes P_{3})\oplus Q_{\Gamma}$ is an $2$-APR tilting $\Gamma$-module associated with $P_{1}\otimes P_{1}$,
here $P_{3}\otimes P_{3}=\tau^{-}_{1}(P_{1})\otimes \tau^{-}_{1}(P_{1})=\tau^{-}_{2}(P_{1} \otimes P_{1})$.
The $2$-APR tilt $\End_{\Gamma}(T_{\Gamma})^{op}$ is also an $2$-representation infinite algebra,
and its bound quiver is given as follows
$$
\xymatrix@C=0.8cm@R1.0cm{
   &{\stackrel{(2,2)}{\bullet}} \ar@{->}@<4pt>[rr]^{x_{4}} \ar@{->}@<-2pt>[rr]_{y_{4}}  \ar@{->}@<4pt>[d]^{x_{2}} \ar@{->}@<-4pt>[d]_{y_{2}}
   &      &{\stackrel{(2,1)}{\bullet}}   \\
&{\stackrel{(1,2)}{\bullet}}   &      &{\stackrel{(1,1)}{\bullet}}\ar@{->}@<6pt>[ull]|(.4){r_{1}} \ar@{->}@<2pt>[ull]|(.5){r_{2}} \ar@{->}@<-2pt>[ull]|(.6){r_{3}}  \ar@{->}@<-6pt>[ull]|(.4){r_{4}}
}
$$
with relations
$y_{2}r_{1}+y_{2}r_{2}=0,~y_{2}r_{3}+y_{2}r_{4}=0,~y_{4}r_{1}+y_{4}r_{3}=0$ and $y_{4}r_{2}+y_{4}r_{4}=0$.
\end{example}

\end{document}